\theoremstyle{definition}
\newcommand{\R}{\mathbb{R}}
\newcommand{\N}{\mathbb{N}}
\newcommand{\E}{\mathcal{E}}
\newcommand{\Si}{\mathcal{S}}
\newcommand{\Siu}{{\Si_0}(u)}
\newcommand{\A}{\mathcal{A}(u)}
\newcommand{\la}{\langle}
\newcommand{\ra}{\rangle}
\newcommand{\Wbar}{\overline{W}}
\newcommand{\mubar}{\overline{\mu}}
\newtheorem{theorem}{Theorem}[section]
\newtheorem{lemma}[theorem]{Lemma}
\newtheorem{proposition}[theorem]{Proposition}
\newtheorem{assumption}[theorem]{Assumption}
\newtheorem{definition}{Definition}[section]
\newtheorem{convention}{Convention}[section]
\DeclarePairedDelimiter\floor{\lfloor}{\rfloor}
\numberwithin{equation}{section}
\title{Rectifiability of the Singular Set of Harmonic Maps into Buildings}
\author{Ben K. Dees}
\begin{document}
\begin{titlepage}
\begin{center}

\maketitle

Johns Hopkins University\\
Department of Mathematics\\
bdees1@jh.edu

\begin{abstract}
In Gromov and Schoen's \cite{GS}, a notion of energy-minimizing maps into singular spaces is developed.  Using some analogues of classical tools, such as Almgren's frequency function, they focus on harmonic maps into $F$-connected complexes, and demonstrate that the singular sets of such functions are closed and of codimension $2$.  More recent work on related problems, such as Naber and Valtorta's \cite{NV}, uses the study of monotone quantities such as the frequency function to prove stronger rectifiability results about the singular sets in these contexts.  In this paper, we adapt this more recent work to demonstrate that an energy-minimizing map from a Euclidean space $\R^m$ to an $F$-connected complex has a singular set which is $(m-2)$-rectifiable.  We also obtain Minkowski bounds on the singular sets of these maps, and in particular use these bounds to demonstrate the local finiteness of the $(m-2)$-dimensional Hausdorff measure of these singular sets.
\end{abstract}

\vspace{10mm}

{\bf Keywords}\\
Harmonic Maps, Regularity of Solutions, Rectifiability, Singular Spaces

\vspace{10mm}

{\bf Declarations}\\
Funding: Not applicable.\\
Conflicts of Interest: Not applicable.\\
Availability of data: Not applicable.\\
Code availability: Not applicable.\\

\vspace{10mm}

{\bf Acknowledgements.}  The author would like to thank Zahra Sinaei for sharing her insights and perspective on this subject.  He would also like to thank Christine Breiner for her useful suggestions, which have improved the exposition of this work.
\end{center}
\end{titlepage}

\section{Introduction}\label{Intro0}

In \cite{GS}, Gromov and Schoen develop a theory of harmonic maps into nonpositively curved spaces, showing that some of the regularity of ordinary harmonic maps into nonpositively curved manifolds extends to this more general case (e.g. local Lipschitz continuity and the monotonicity of Almgren's frequency function).   For certain spaces, namely $F$-connected complexes, they moreover show stronger regularity properties.  In particular, they show that the singular sets of harmonic maps into $F$-connected complexes have codimension $2$; everywhere else the maps are given by analytic maps into Euclidean spaces (these Euclidean spaces being the ``flats" or ``apartments" of the complexes).  This codimension bound and control on the singular set allow them to use a Bochner formula, as in the smooth setting.

This ability to transfer classical results into a singular setting allows Gromov and Schoen to show a $p$-adic superrigidity result for certain lattices.  Similar rigidity results for Teichm\"uller space and hyperbolic complexes, due to Daskalopoulos and Mese in \cite{DMRigidTeich} and \cite{DMVSuperrigid}, again use analogues of classical methods.  Thus, they rely on {\em a priori} control of the singular set, because the maps involved must be sufficiently regular on a sufficiently large portion of the domain for classical methods to be relevant.  The necessary {\em a priori} estimates, in \cite{DMEssReg} and \cite{DMDMComplex}, give bounds on the Hausdorff dimension of the singular sets.

However, \cite{GS} leaves open the question of whether their singular sets are rectifiable, as one might hope.  Azzam and Tolsa, in \cite{AT}, show an equivalent condition to rectifiability involving a quantity called the Jones $\beta_2$-numbers.  This is a key result which we use to show that the singular set of a harmonic map into an $F$-connected complex is $(m-2)$-rectifiable.  Drawing from the quantitative stratification results of Cheeger and Naber in \cite{CN}, Naber and Valtorta prove a rectifiable-Reifenberg result in \cite{NV} which provides important volume bounds in one part of this paper.  Similar results have been developed in e.g. \cite{ENV16, ENV19}.  The method of Naber and Valtorta provides a framework which de Lellis, Marchese, Spadaro, and Valtorta use to study $Q$-valued functions in \cite{DMSV}, and which Alper uses in \cite{A} to study an optimal partition problem.  We draw special attention to this work of Alper, \cite{A}, because it shows that the singular sets of harmonic maps into one-dimensional $F$-connected complexes (trees) are $(m-2)$-rectifiable.  In this paper, we adapt the framework of Naber-Valtorta to our special case, following the arguments of \cite{DMSV}.  Then, using the aforementioned results of \cite{AT} and \cite{NV}, we show that the singular set of a harmonic map into an $F$-connected complex is $(m-2)$-rectifiable.  

In particular, our main results are the following theorems, in which $\Si(u)$ denotes the singular set of $u$ and $\Siu$ denotes a particular subset of the singular set, the set of singular points which have high order in the sense of \cite{GS}.  (We shall define all of these terms more precisely in the following section.)

\begin{theorem}\label{Thm1.3}
The singular set $\Si(u)$ of a minimizing map $u:\Omega\to X$, for $\Omega\subset\R^m$ and $X$ an $F$-connected complex, is $(m-2)$-countably rectifiable.
\end{theorem}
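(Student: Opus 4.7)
The plan is to follow the Naber--Valtorta framework, as adapted for $Q$-valued functions in \cite{DMSV} and for trees in \cite{A}, and combine it with the Jones $\beta_2$-characterization of rectifiability from \cite{AT}. First, I would stratify $\Si(u) = \Siu \cup (\Si(u)\setminus\Siu)$. The complement $\Si(u)\setminus\Siu$ consists of singular points of low order in the sense of \cite{GS} and can be handled directly from the local structure available there for maps of order close to $1$. The core task is to prove $(m-2)$-rectifiability of $\Siu$. Partitioning by upper and lower frequency bounds and localizing to balls of bounded energy then reduces the problem to rectifying each of countably many pieces of the form $\Siu \cap B_1(x_0) \cap \{\alpha_0 \le \alpha(y) \le \alpha_1\}$, where $\alpha$ denotes the frequency.

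Next, I would develop the quantitative machinery on each such piece. The central analytic input is a frequency-pinching statement: if the Gromov--Schoen frequency of $u$ at $x \in \Siu$ does not drop appreciably between scales $r$ and $2r$, then $u$ is $L^2$-close on $B_r(x)$ to a map homogeneous of some degree about $x$. Cone splitting then ensures that if $B_r(x)$ contains sufficiently many frequency-pinched points spanning a nearly $(m-1)$-dimensional affine set, the associated homogeneous model is invariant under translations in $m-1$ directions, which the \cite{GS} classification forbids for a singular tangent map (whose translational invariance is at most $(m-2)$-dimensional). With cone splitting in hand, following \cite{DMSV} I would establish the main $\beta_2$-estimate: for any finite Borel measure $\mu$ supported on a frequency-pinched piece,
\[
\int_{B_r(x)} \int_0^r \beta_{\mu,2}^{m-2}(y,s)^2 \, \frac{ds}{s} \, d\mu(y) \le C \int_{B_{2r}(x)} \bigl( \Wbar_{2r}(y) - \Wbar_{r_0}(y) \bigr) \, d\mu(y),
\]
where $\Wbar_r$ is an appropriately smoothed monotone frequency quantity. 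The right-hand side is uniformly controlled by the frequency monotonicity and an energy bound.

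Applying the \cite{AT} characterization converts this $\beta_2$-bound into countable $(m-2)$-rectifiability of $\Siu$, which together with the easier treatment of $\Si(u) \setminus \Siu$ yields the theorem; the rectifiable-Reifenberg theorem of \cite{NV} applied to the same estimate delivers the Minkowski bounds announced in the introduction. The main obstacle will be the quantitative cone-splitting in the $F$-connected target setting. Unlike the smooth-target or $Q$-valued case, tangent maps take values in the tangent cone of $X$, itself an $F$-connected complex with its own branching, and closeness of a map to a homogeneous model must be measured intrinsically in that singular target. Promoting the qualitative tangent-map analysis of \cite{GS}---compactness of blow-ups and classification of singular homogeneous maps---into quantitative, scale-uniform $L^2$ inequalities compatible with the $\beta$-number scheme, while respecting the discrete combinatorics of apartment choices at branch points, is the most delicate step.
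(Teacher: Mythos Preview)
Your outline for $\Siu$ is essentially the paper's strategy: frequency pinching, cone-splitting via homogeneous comparison, a $\beta$-number bound of the form in Proposition~\ref{flatbound}, and then the Azzam--Tolsa characterization together with the Naber--Valtorta rectifiable-Reifenberg theorem. The paper additionally reduces to \emph{conical} targets (tangent cones of $X$) and works with the smoothed functionals $E_\phi,H_\phi,I_\phi$ of \cite{DS}; this is what makes the variational identity $\Delta d^2(u,0_X)=2|\nabla u|^2$ an equality rather than an inequality, and it is worth flagging since your sketch does not mention it.

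There is a genuine gap in your treatment of $\Si(u)\setminus\Siu$. You say this set ``can be handled directly from the local structure available there for maps of order close to $1$,'' but the Gromov--Schoen structure theorem (Theorem~\ref{thm:BigGS}(2)) does \emph{not} by itself make these points regular. What it gives is that near a singular point $x_0$ of order $1$, the map splits as $u=(u_1,u_2):B\to\R^k\times X_2$ with $u_1$ smooth harmonic of full rank and $X_2$ an $F$-connected complex of dimension $d-k<d$; hence $\Si(u)\cap B=\Si(u_2)\cap B$. To conclude rectifiability of $\Si(u_2)$ you need the theorem \emph{for the lower-dimensional target $X_2$}. The paper closes this by an induction on $\dim X$: the base case $\dim X=1$ uses Lemma~\ref{lem:1dSi} ($\Si(u)=\Siu$ for tree targets), and the inductive step applies the $\Siu$-result to the original $u$ and the full theorem, inductively, to each $u_2$. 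Without this induction your argument for $\Si(u)\setminus\Siu$ is circular, since showing $\Si(u_2)$ is rectifiable is exactly the problem you are trying to solve, just for a different $F$-connected target.
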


In the following theorem, and throughout this paper, we use the notation $B_r(E)$ to denote the tubular neighborhood of radius $r$ about a set $E\subset\R^m$, that is,
\[
B_r(E):=\{y\in\R^m:d(x,y)<r\text{ for some }x\in E\}.
\]
Additionally, $\mathcal{H}^{m-2}$ denotes the $(m-2)$-dimensional Hausdorff measure on $\R^m$.

\begin{theorem}\label{Thm1.2}
Let $u:\Omega\to X$ be a minimizing map, where $\Omega\subset\R^m$ and $X$ is an $F$-connected complex.  Then for any compact $K\subset\Omega$, we have $\mathcal{H}^{m-2}(\Siu\cap K)<\infty$ and indeed we have the Minkowski-type estimate
\begin{equation}
|B_r(\Siu\cap K)|\leq C(K,u)r^2
\end{equation}
for $r<1$.  The set $\Siu$ is moreover $(m-2)$-countably rectifiable; that is, it is covered by a countable collection of $C^1$ surfaces of dimension $m-2$ and a set of $\mathcal{H}^{m-2}$ measure zero.
\end{theorem}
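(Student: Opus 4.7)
The plan is to execute the quantitative stratification and covering scheme of Naber--Valtorta \cite{NV}, adapted to the building-valued setting along the lines of \cite{DMSV} and \cite{A}. The central tool supplied by \cite{GS} is the monotonicity of Almgren's frequency $N(x,r)$ in $r$ at any $x\in\Omega$, which ensures that the pointwise order $\alpha(x):=\lim_{r\to 0^+}N(x,r)$ is well-defined and upper semi-continuous. Since $\Siu$ consists of singular points with order at least some threshold $\alpha_0>0$, and since monotonicity and compactness force $\alpha(x)\leq \Lambda(K,u)$ on $K\Subset\Omega$, every point of $\Siu\cap K$ enjoys two-sided frequency bounds, which are precisely the hypotheses that the Naber--Valtorta machinery demands.

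The quantitative input to the machine will be a frequency-pinching-to-$\beta_2$ estimate: if $x\in\Siu$ and the frequency drop $N(x,2r)-N(x,r/8)$ is smaller than some $\eta$, then at a comparable scale the $L^2$ best-plane number of a natural measure $\mu$ on $\Siu$ satisfies a bound of the form $\beta_{2,\mu}^{m-2}(x,r)^2\leq C\cdot(\text{frequency drop across nearby scales})$. The proof will follow the usual path: pinching forces the map to be $L^2$-close at scale $r$ to a tangent map homogeneous of degree $\alpha(x)$; the spine of this tangent map (the set of translations under which it is invariant) has dimension at most $m-2$, because an $(m-1)$-dimensional symmetry would place $x$ in the regular set, contradicting $x\in\Siu$; and a continuity statement for $\phi\mapsto\mathrm{spine}(\phi)$ in an appropriate topology then yields the $\beta_2$ bound. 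I expect this to be the hardest step of the argument: it requires a $W^{1,2}$-type compactness theorem for minimizing maps into $F$-connected complexes, a good structure theory for homogeneous tangent maps and their spines, and a classification of $(m-1)$-symmetric tangent maps in this target. Adapting these inputs from the Euclidean-target setting of \cite{DMSV} to the building target, using the flat structure provided by \cite{GS}, is where the bulk of the technical work will sit.

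With the pinching-to-$\beta_2$ estimate in place, a standard corona/tree construction as in \cite{NV} partitions scales and decomposes $\Siu\cap K$ (after reducing to a single ball of bounded frequency) into ``good'' balls, on which the frequency is two-sided-pinched, and ``bad'' balls, on which the frequency drops by at least $\eta$. Any chain of bad balls has length at most $\Lambda/\eta$, contributing a controlled multiplicative factor, while summation of the $\beta_2$ estimates on good balls feeds into the rectifiable-Reifenberg theorem of \cite{NV} to yield both the Minkowski estimate $|B_r(\Siu\cap K)|\leq C(K,u)r^2$ for $r<1$ and $(m-2)$-rectifiability of $\Siu\cap K$, the latter via the Azzam--Tolsa $\beta_2$ criterion \cite{AT}. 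Finally, $\mathcal{H}^{m-2}(\Siu\cap K)<\infty$ is immediate from the Minkowski bound through the standard inequality $\mathcal{H}^{m-2}(E)\leq C\liminf_{r\to 0}r^{-2}|B_r(E)|$, and a countable exhaustion of $\Omega$ by compact sets extends the conclusions to all of $\Siu$.
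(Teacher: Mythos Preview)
Your outline is essentially the paper's approach: it too runs the Naber--Valtorta machine, feeding in a frequency-pinching-to-mean-flatness bound and extracting Minkowski content from the rectifiable-Reifenberg theorem and rectifiability from Azzam--Tolsa. Two points of divergence are worth flagging.

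First, the paper does not work directly on the $F$-connected complex $X$. Its proof of Theorem~\ref{Thm1.2} begins with a reduction you omit: since $F$-connected complexes are \emph{locally conical}, for each $x\in\Omega$ one restricts $u$ to a small ball on which the target can be replaced by the tangent cone $X_{u(x)}$ with cone point $0_X=u(x)$. This is not cosmetic. On a conical target the key outer variation identity $\Delta d^2(u,0_X)=2|\nabla u|^2$ holds as an \emph{equality} (Lemma~\ref{extvar}), whereas on a general NPC target one only has an inequality; the equality is what makes the smoothed frequency exactly monotone and drives all the quantitative identities in Proposition~\ref{calculations}. Your sketch should insert this localization/conical-reduction step before invoking any pinching estimates.

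Second, your route to the $\beta_2$ bound (closeness to a homogeneous tangent map, then continuity of the spine) is not the one the paper takes. Following \cite{DMSV}, the paper proves Proposition~\ref{flatbound} by a direct linear-algebraic argument: diagonalize the second-moment form of $\mu$, use the eigenvector identity to write $\lambda_j|\partial_{v_j}u|^2$ as an integral of $|\nabla u(z)\circ(z-x)-\alpha u(z)|^2$, and control that by the pinching via the $L^2$ estimate of Proposition~\ref{prop4.3}. Compactness enters only once, to get a uniform lower bound on $\sum_{j\le m-1}\int|\partial_{v_j}u|^2$ near points of high order (and here the paper uses the dimension bound on $\A$, not on $\Si(u)$, which is why the argument targets $\Siu$ rather than all of $\Si(u)$). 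Your spine-continuity approach can be made to work in other settings, but the paper's direct estimate is sharper and avoids having to develop a structure theory for tangent maps into buildings.
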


At the end of Section \ref{Prelim} we shall see how Theorem \ref{Thm1.2} implies Theorem \ref{Thm1.3}; the proof relies on a number of results from \cite{GS} that we shall summarize there.

\section{Preliminaries}\label{Prelim}

In this section we summarize the key definitions and theorems used in this paper.  This is intended to motivate our approaches and methods, and to provide the key context for the main theorems.  

In Subsection \ref{subsec:GS}, we focus on definitions and results of \cite{GS} and \cite{KSSobolev} which provide context for both general nonpositively curved spaces and $F$-connected complexes in particular.  The notion of a conical complex is discussed in Subsection \ref{subsec:cone}.  These complexes are slightly better behaved than general nonpositively curved complexes for the purposes of some variational results, and are hence a useful tool in the work of this paper.  We then introduce smoothed versions of the energy, height, and order functionals in the Subsection \ref{subsec:smoothedfun}.  Subsection \ref{subsec:key} focuses on a few key results and showing how these key results imply our main theorems.  Finally, we outline the paper, introducing the needed results of \cite{AT} and \cite{NV} and giving an overview of our arguments.

\subsection{Nonpositively Curved and $F$-Connected Complexes}\label{subsec:GS}

Here, we work with simply connected locally compact Riemannian simplicial complexes embedded in Euclidean space, as in \cite{GS}.  We consider complexes embedded into Euclidean space in such a way that the induced Riemannian metric on each simplex is the appropriate Riemannian metric.  Working with complexes embedded in this way is primarily for convenience.  We direct the reader to \cite{KSSobolev} for an exposition on nonpositively curved spaces which makes no use of such embeddings, and is hence more technical, but more general.

\begin{definition}
Throughout this subsection, we shall say that $X$ is an {\bf admissible complex} if $X$ is a locally compact Riemannian simplicial complex.
\end{definition}


The following preliminary discussion is based on Sections 1 and 2 of \cite{GS}, but we reproduce it here for the reader's convenience.

For an admissible complex $X$ embedded in $\R^N$, a smooth $m$-dimensional Riemannian manifold $(M,g)$, and $\Omega\subset M$ an open domain with smooth boundary we define the Sobolev space $H^1(\Omega,X)$ by

\[
H^1(\Omega,X)=\{u\in H^1(\Omega,\R^N): u(x)\in X\text{ a.e. }x\in\Omega\}.
\]

Here, $H^1(\Omega,\R^N)$ denotes the Sobolev space of of $L^2$ functions $u:\Omega\to\R^N$ with $L^2$ weak first derivatives.

\begin{definition}
For a function $u\in H^1(\Omega,\R^N)$ (and in particular for a function $u\in H^1(\Omega,X)$, our main context) the {\bf energy density} of $u$ is the $L^2$ function
\[
|\nabla u|^2:=\sum_{i,j=1}^mg^{ij}\Big\langle\frac{\partial u}{\partial x^{i}},\frac{\partial u}{\partial x^{j}}\Big\rangle
\]
where $\langle\cdot,\cdot\rangle$ denotes the dot product in $\R^N$.

In this paper we will usually work with the case where $M=\R^m$; in this case the energy density reduces to 
\[
|\nabla u|^2=\sum_{i=1}^m\Big|\frac{\partial u}{\partial x^i}\Big|^2.
\]
\end{definition}

\begin{definition}\label{def:energy}
The {\bf energy} of $u\in H^1(\Omega,X)$ on a set $A\subset\Omega$ is
\[
E_u(A):=\int_{A}|\nabla u|^2 d\mu_g
\] 
where $\mu_g$ denotes the Riemannian volume measure on $M$.  (Note that this is simply the usual Lebesgue measure if $M=\R^m$.)

If $A=B_r(x)$, the geodesic ball of radius $r$ about $x$, we will usually write $E_u(x,r):=E_u(B_r(x))$.  We also frequently omit the subscripts $u$ whenever the map is clear from context, or when only one map is under consideration.
\end{definition}

\begin{definition}\label{def:harmonic}
We say that $u$ is {\bf energy-minimizing on $\Omega$} (or {\bf minimizing} or {\bf harmonic}) if for any $v\in H^1(\Omega,X)$ so that $u=v$ on $\partial\Omega$, $E_u(\Omega)\leq E_v(\Omega)$.  (We say that two maps are equal on the boundary of $\Omega$ if they have the same trace, in the usual manner for Sobolev functions.)
\end{definition}

A preliminary result of \cite{GS} shows that for any $\phi\in H^1(\Omega,X)$, there is a unique energy minimizing $u$ so that $u=\phi$ on $\partial\Omega$.  Additionally, they prove that an energy minimizing map $\phi$ from $[0,1]$ minimizes length among Lipschitz curves between $\phi(0)$ and $\phi(1)$.  As in the fully smooth setting we have a term for such maps.

\begin{definition}
We call an energy-minimizing map $\phi:[0,1]\to X$ a {\bf geodesic}.
\end{definition}

Note that we require geodesics to be globally energy-minimizing here as opposed to the local condition often considered in the smooth setting.  Because of the uniqueness of energy-minimizing maps, any two geodesics between $P$ and $Q$ are the same up to reparametrization.  In \cite{GS} it is also shown that geodesics have constant speed so we may sensibly speak of ``unit speed geodesics" for example.

\begin{definition}
We define the {\bf distance} between points $P,Q\in X$ to be the length of the geodesic between them, and denote this as $d_X(P,Q)$.  It can be readily verified that (as in the smooth setting) this is a metric on $X$ which is compatible with the topology of $X$ as a Riemannian simplicial complex.
\end{definition}

In this context, \cite{GS} proves a monotonicity formula for Almgren's frequency function.

\begin{definition}\label{def:height}
For a minimizing map $u$, a ball $B_r(x)\subset\Omega$, and a point $Q\in X$, we define the {\bf height function} of $u$ on the ball (with respect to $Q$) as
\[
H_u(x,r,Q):=\int_{\partial B_r(x)}d^2(u(y),Q)d\Sigma(y)
\]
where $\Sigma$ denotes the surface measure of $\partial B_r(x)$.

Additionally, we define the {\bf frequency} of $u$ on $B_r(x)$ (with respect to $Q$) as
\[
I_u(x,r,Q):=\frac{rE_u(x,r)}{H_u(x,r,Q)}.
\]
\end{definition}

Gromov and Schoen show that this is approximately monotonic, in the precise sense that for some constant $c$ depending only on the metric $g$ of the domain manifold $(M,g)$, we have
\[
\frac{d}{dr}\Big[e^{cr^2}I_u(x,r,Q)\Big]\geq0.
\]
When $M=\R^m$ with the usual Euclidean metric, the constant $c$ is zero, so the frequency is increasing.  (We shall see a similar result for the smoothed frequency, later.)

From this result (equation (2.5) of \cite{GS}), they define a notion of ``order." 

\begin{definition}\label{def:order}
We define the function $\text{Ord}_u(x,r,Q):=e^{cr^2}I_u(x,r,Q)$.  In particular, if $\Omega$ is a Euclidean domain, we take $c=0$ so that $\text{Ord}_u=I_u$.  In \cite{GS} it is shown that there is a unique $Q_{r,x}$ maximizing this for fixed $x,r$, and that the limit
\[
\lim_{r\to0}\text{Ord}_u(x,r,Q_{x,r})
\]
exists.  The resulting function
\[
\text{Ord}_u(x):=\lim_{r\to0}\text{Ord}_u(x,r,Q_{x,r})
\]
is called the {\bf order function}.  This function is also shown to be upper semicontinuous and at least $1$ for all $x\in\Omega$.

We remark that the technical choice of $Q_{x,r}$ as a maximizer is only necessary because \cite{GS} has not---at this point---established that minimizing maps are continuous; they do this via the analysis of the order function.  {\em A posteriori}, if $u$ is continuous, we may instead define the order function as
\begin{equation}\label{def:ctsord}
\text{Ord}_u(x):=\lim_{r\to0}\text{Ord}_u(x,r,u(x)).
\end{equation}
This fact is shown just after the proof of Proposition 3.1 in \cite{GS}.  Because we will be working with continuous minimizers, we often use equation (\ref{def:ctsord}) as the definition of the order.
\end{definition}

To prove that minimizers are continuous, we must turn to the main context for the results of \cite{GS} and the next section of this paper.  Just as with classical harmonic maps, we can prove better regularity results when the target space has nonpositive sectional curvature.  However, we must define a notion of curvature that does not require the space to be smooth.

Suppose that $X$ is a simply connected admissible complex, and let $P_0,P_1,P_2\in X$.  Consider the geodesic $\gamma$ between $P_0$ and $P_1$, with $\gamma(0)=P_0$ and $\gamma(\ell)=P_1$, where $\ell=d(P_0,P_1)$.  This in particular means that $\gamma$ has unit speed.  Write
\[
D_X(s)=d^2_X(\gamma(s),P_2)
\]
to denote the squared distance from $\gamma(s)$ to $P_2$.  Geometrically, this is the squared distance from $P_2$, which is one vertex of the triangle $P_0,P_1,P_2$ to the opposite side of this triangle.  We denote by $D_E(s)$ the solution to the differential equation $D_E''(s)=2$ with $D_E(0)=D_X(0)$ and $D_E(\ell)=D_X(\ell)$.  Geometrically, if $A_0,A_1,A_2$ is a triangle in $\R^2$ with
\[
d_X(P_i,P_j) = |A_i-A_j|
\]
for all $i,j\in\{0,1,2\}$, $D_E$ is the squared distance from $A_2$ to the opposite side of the triangle formed by $A_0,A_1,A_2$.

\begin{definition}
We say that a simply connected admissible complex $X$ is a {\bf non-positively curved complex} or that $X$ is {\bf non-positively curved} if for any such selection of three points, and for any $s\in[0,\ell]$, $D_X(s)\leq D_E(s)$.  Geometrically, this says that when we compare triangles in the two spaces, the sides of triangles in a non-positively curved space ``bulge inwards" more than (or precisely, {\em at least as much} as) triangles in Euclidean space.
\end{definition}

A brief sketch to illustrate this definition is produced in Figure \ref{fig:NPC}.

\begin{figure}\centering
\begin{tikzpicture}
\filldraw[black] (0,0) circle (1pt) node [anchor=north]{$A_0$};
\filldraw[black] (2,1) circle (1pt) node [anchor=west]{$B_0$};
\filldraw[black] (1,2) circle (1pt) node [anchor=east]{$C_0$};

\filldraw[black] (-5,0) circle (1pt) node [anchor=north]{$A_-$};
\filldraw[black] (-3,1) circle (1pt) node [anchor=west]{$B_-$};
\filldraw[black] (-4,2) circle (1pt) node [anchor=east]{$C_-$};

\filldraw[black] (5,0) circle (1pt) node [anchor=north]{$A_+$};
\filldraw[black] (7,1) circle (1pt) node [anchor=west]{$B_+$};
\filldraw[black] (6,2) circle (1pt) node [anchor=east]{$C_+$};

\draw[black] (0,0) -- (2,1);
\draw[black] (2,1) -- (1,2);
\draw[black] (1,2) -- (0,0);

\draw[black] (-5,0) .. controls (-4,.875) .. (-3,1);
\draw[black] (-3,1) .. controls (-3.75, 1.3) .. (-4,2);
\draw[black] (-4,2) .. controls (-4.125,1) .. (-5,0);

\draw[black] (5,0) .. controls (6.25,.375) .. (7,1);
\draw[black] (7,1) .. controls (6.5,1.8) .. (6,2);
\draw[black] (6,2) .. controls (5.4,1.4) .. (5,0);
\end{tikzpicture}
\caption{Three triangles are shown.  The central triangle has straight sides, corresponding to flat space (zero curvature).  The one to the left has its sides bowed inwards, so that each vertex is closer to the opposite side than the flat case; this triangle corresponds to a negatively curved space.  The rightmost triangle has its sides bowed outwards, so that each vertex is further from the opposite side than in the flat case; this triangle corresponds to a positively curved space.  A nonpositively curved space cannot have geodesic triangles like the rightmost one.}
\label{fig:NPC}
\end{figure}
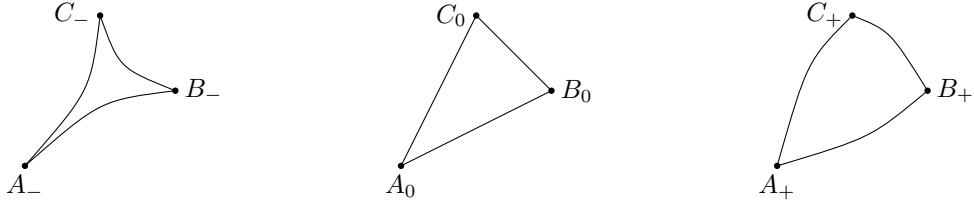

The following result, from \cite{KSSobolev}, makes critical use of the assumption of nonpositive curvature.

\begin{theorem}[Theorem 2.4.6 of \cite{KSSobolev}]\label{GS2.3}
Let $\Omega\subset(M,g)$ be a Lipschitz Riemannian domain, and let $X$ be a nonpositively curved space.  If $u:\Omega\to X$ is an energy minimizing map, then $u$ is a locally Lipschitz continuous function in the interior of $\Omega$.  The local Lipschitz constant at $x\in\Omega$ is bounded in terms of the dimension of $M$, the metric $g$, the total energy of $u$, and the distance from $x$ to $\partial\Omega$.
\end{theorem}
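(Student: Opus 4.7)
The plan is to exploit the NPC hypothesis on $X$ to turn the nonlinear problem into a weak subharmonicity problem on $\Omega$, and then apply classical elliptic regularity (mean value inequality, Moser iteration) in the Euclidean base. The key nonlinear input is a comparison inequality coming from NPC: for any $P_0, P_1 \in X$, $Q\in X$, and the geodesic $\gamma$ from $P_0$ to $P_1$, the function $s\mapsto d_X^2(\gamma(s),Q)$ is dominated by the corresponding Euclidean quadratic, equivalently, midpoints of geodesics in $X$ are at most as far from $Q$ as the corresponding midpoints would be in flat space. This is the tool that makes geodesic averaging a legitimate test variation.

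First, I would fix $Q\in X$ and show that $f_Q(x) := d_X^2(u(x),Q)$ is weakly subharmonic on $\Omega$, with $\Delta f_Q \ge 2|\nabla u|^2$ as distributions. The test variation is $u_t(x) := \gamma_{u(x),Q}(t)$, the geodesic interpolation of $u$ toward $Q$ at speed controlled by a smooth cutoff $\eta\ge 0$. Using the NPC comparison to bound $|\nabla u_t|^2$ pointwise in terms of $|\nabla u|^2$, the derivatives of $f_Q$, and a quadratic term in $t$, and then using minimality of $u$ against $u_t$, one collects the order-$t$ terms to obtain $\int_\Omega \langle \nabla\eta,\nabla f_Q\rangle\, d\mu_g \le -2\int_\Omega \eta\,|\nabla u|^2\, d\mu_g$, which is the asserted weak subharmonicity. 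This already upgrades $f_Q$ to a continuous function by De Giorgi--Nash--Moser, yielding continuity of $u$ itself.

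Next, I would upgrade this to local boundedness of the energy density. The natural candidate is the approximate density $e_\varepsilon(x) := \varepsilon^{-2}\, \fint_{\partial B_\varepsilon(x)} d_X^2(u(y),u(x))\, d\Sigma(y)$. Applying the subharmonicity of $f_{u(x_0)}$ with $x_0$ playing the role of $Q$, together with a Poincaré-type bound $\int_{B_r(x_0)} d_X^2(u(y),u(x_0))\, dy \lesssim r^2 E_u(x_0,r)$, one obtains a mean value inequality for $e_\varepsilon$ uniform in $\varepsilon$. Passing to the limit (or working directly with the Korevaar--Schoen energy density $|\nabla u|^2$), one obtains that $|\nabla u|^2$ is itself a weak subsolution of an elliptic equation on $\Omega$. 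A Moser iteration then gives $\|\,|\nabla u|^2\|_{L^\infty(B_{r/2}(x_0))}\le C(g,m)\, r^{-m} E_u(x_0,r)$, with $r = \mathrm{dist}(x_0,\partial\Omega)$, which is exactly the quantitative local bound claimed.

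The Lipschitz conclusion then follows: for $x,y \in \Omega$ close, integrate $|\nabla u|$ along the segment from $x$ to $y$ and use the NPC triangle inequality to pass from this integral to $d_X(u(x),u(y))$, obtaining $d_X(u(x),u(y)) \le \|\,|\nabla u|\,\|_{L^\infty} |x-y|$. The main obstacle I foresee is the first paragraph: making rigorous sense of $|\nabla u|^2$ and of the derivatives of $f_Q$ when the target is a singular metric space, and verifying that the NPC comparison really does produce the $+2\int\eta|\nabla u|^2$ term with the correct sign. This requires the Korevaar--Schoen framework of directional energy densities and a careful expansion of $d_X^2(u_t(x),u_t(y))$ using the four-point NPC inequality (the CAT(0) quadrilateral comparison) rather than the three-point one used above; from there the elliptic machinery runs in a standard way.
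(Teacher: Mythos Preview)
The paper does not prove this theorem; it is quoted from \cite{KSSobolev} and used as a black box throughout. So there is no proof in the paper to compare against. That said, your sketch follows the standard Korevaar--Schoen route, and the overall architecture is right: NPC gives weak subharmonicity of $f_Q=d_X^2(u,Q)$ via geodesic interpolation toward $Q$ (this is exactly Proposition~2.2 of \cite{GS}, referenced just after Lemma~\ref{extvar} in the paper), and a pointwise bound on $|\nabla u|^2$ then yields the Lipschitz conclusion.

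There is one genuine gap in your second paragraph. The claim that ``$|\nabla u|^2$ is itself a weak subsolution of an elliptic equation'' is not a consequence of the subharmonicity of the scalar functions $f_Q$; it is a separate and much harder fact, the singular-target analogue of the Bochner inequality. Applying subharmonicity of $f_{u(x_0)}$ as you suggest only controls the growth of the height $\int d^2(u,u(x_0))$, not of the energy density. The actual Korevaar--Schoen argument (which you allude to but then sidestep) works directly with the $\varepsilon$-approximate densities: one compares $u$ against the competitor obtained by geodesically averaging $u$ over small spheres, uses the NPC \emph{quadrilateral} comparison to bound the energy of the averaged map, and extracts from minimality a uniform-in-$\varepsilon$ mean-value inequality for $e_\varepsilon$. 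Only after this does one pass to the limit and run Moser iteration. Your sketch has the ingredients but assembles them in the wrong order at this step; the subharmonicity of $f_Q$ is an input to continuity, not to the Lipschitz bound.
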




On occasion, it will be useful for us to recall the following result of \cite{GS} as well.

\begin{proposition}[Proposition 3.4 of \cite{GS}]\label{prop:GS3.4}
If $\Omega$ is connected, and $u:\Omega\to X$ is a minimizing map which is constant on an open subset of $\Omega$, then $u$ is identically constant on $\Omega$.
\end{proposition}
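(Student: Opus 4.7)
The plan is to prove unique continuation by showing that the set
\[
W = \{x \in \Omega : u \equiv P \text{ on some open neighborhood of } x\}
\]
(where $P$ is the common value of $u$ on the given open subset) is both open and closed in $\Omega$; connectedness of $\Omega$ will then force $W = \Omega$. Openness and non-emptiness are immediate, so the task reduces to relative closedness.

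For closedness I would argue by contradiction. Supposing $W \subsetneq \Omega$, pick $x_0 \in W$ close to $\partial W$ with $\overline{B_R(x_0)} \subset \Omega$ and set $\rho_0 = \sup\{\rho \in (0, R) : u \equiv P \text{ on } B_\rho(x_0)\}$. By the Lipschitz continuity of Theorem~\ref{GS2.3}, $u \equiv P$ on $\overline{B_{\rho_0}(x_0)}$ and $0 < \rho_0 < R$. Writing $H(r) := H_u(x_0, r, P)$ and $E(r) := E_u(x_0, r)$, a first observation is that $H(r) > 0$ for every $r \in (\rho_0, R)$: if $u|_{\partial B_r(x_0)} \equiv P$, then the constant competitor $v \equiv P$ on $B_r(x_0)$ has matching boundary trace and zero energy, so by minimality $u \equiv P$ on $B_r(x_0)$, contradicting the maximality of $\rho_0$. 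Hence the frequency $I(r) := rE(r)/H(r)$ is well-defined on $(\rho_0, R)$ and, in the Euclidean-domain case (with $c = 0$), is nondecreasing by \cite{GS}.

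I would then produce a contradiction by combining monotonicity of $I$ with a suitable differential relation for $H$ together with the continuity assertion $H(r) \to 0$ as $r \to \rho_0^+$. The natural first attempt is to integrate the identity (or, in the NPC case, the inequality) $H'(r) \geq \frac{m-1}{r} H(r) + 2E(r)$, which rewrites as $\frac{d}{dr} \log(H(r)/r^{m-1}) \geq 2I(r)/r$. In the Euclidean-target setting, where equality holds, this directly yields a positive lower bound on $H(r)$ near $\rho_0^+$ that is incompatible with $H(\rho_0^+) = 0$. In the NPC setting the one-sided inequality alone goes the wrong direction for this Gronwall-type argument, and one must add a variational comparison with a radially stretched competitor $\tilde u(y) = u(x_0 + \phi(|y - x_0|)(y - x_0)/|y - x_0|)$ whose constancy region is $B_{\rho_0 + \epsilon}(x_0)$ in place of $B_{\rho_0}(x_0)$, for a suitable radial profile $\phi$ with $\phi(R) = R$; this is the approach of Proposition~3.4 in \cite{GS}, which I would follow directly.

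The main obstacle is the execution of this variational comparison in the NPC setting: one has to choose the profile $\phi$ so that $\tilde u$ is an admissible competitor (matching $u$ on $\partial B_R(x_0)$) with strictly smaller energy on $B_R(x_0)$, and then use the resulting bound to produce the required lower bound on $H(r)$ near $\rho_0^+$. The remaining ingredients---continuity, frequency monotonicity, and the fact that minimality passes to subdomains with matching boundary trace---are already available from \cite{GS} and Theorem~\ref{GS2.3}.
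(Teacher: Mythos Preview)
The paper does not give its own proof of this proposition; it is stated as Proposition~3.4 of \cite{GS} and used as a black box. Your proposal is not so much a proof as a correct identification of the \cite{GS} argument: the open--closed reduction, the positivity of $H(r)$ for $r>\rho_0$ via the constant competitor, the observation that the NPC differential inequality $H' \geq \tfrac{m-1}{r}H + 2E$ points the wrong way for a Gronwall lower bound, and the need to bring in a radial reparametrization competitor. Since you explicitly defer the decisive step to ``the approach of Proposition~3.4 in \cite{GS}, which I would follow directly,'' your proposal and the paper's treatment coincide---both rely on \cite{GS} for the actual argument.
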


Additionally, in a nonpositively curved complex $X$, we have the following family of maps $X\to X$.

\begin{definition}
For any $P_0\in X$ and for any $\lambda\in[0,1]$, we shall define the {\bf retraction towards $P_0$ by $\lambda$}, $R_{P_0,\lambda}:X\to X$.

To define $R_{P_0,\lambda}(Q)$, let $\gamma:[0,1]\to X$ be the geodesic with $\gamma(0)=P_0$ and $\gamma(1)=Q$.  Then we let
\[
R_{P_0,\lambda}(Q)=\gamma(\lambda).
\]

This map is Lipschitz with Lipschitz constant at most $\lambda$; this can be verified by repeated applications of the nonpositive curvature condition.  Further, the family of maps $R_{P_0,\lambda}$ are a homotopy from the constant map at $P_0$ to the identity map.  (In particular, the nonpositively curved complexes considered here are all contractible.)
\end{definition}

Finally, we further specialize to a class of complexes where \cite{GS} shows further results.

\begin{definition}\label{def:Fcon}
We say that a nonpositively curved complex $X$ of dimension $k$ is {\bf $F$-connected} if any two adjacent simplices $S,S'$ are contained in a totally geodesic subcomplex $X_{S,S'}$ which is isometric to a subset of the Euclidean space $\R^k$.

A {\bf $k$-flat} of $X$ is a totally geodesic subcomplex of $X$ isometric to a subset of $\R^k$.

For a point $P\in X$ an $F$-connected complex, we denote the {\bf tangent cone at $P$} by $X_P$.  Observe that a neighborhood of $P$ in $X$ is isometric to a neighborhood of the origin in $X_P$.  For this reason we will sometimes (in a mild abuse of notation) blur the distinction between flats of $X_P$ and flats of $X$ that contain $P$.
\end{definition}

If a minimizing map has its image contained in some particular $k$-flat of $X$, we may regard it as a minimizing map into $\R^k$; it hence inherits all regularity properties of classical harmonic maps.  This motivates the following division of the domain into the regular points and the singular points.

\begin{definition}\label{def:sing}
Let $u:\Omega\to X$ be a minimizing map into an $F$-connected complex.  We say that $x_0\in\Omega$ is a {\bf regular point} if there is some $\sigma_0>0$, and some $k$-flat $F\subset X_{u(x_0)}$, so that $u(B_{\sigma_0}(x_0))\subset F$.

If $x_0$ is not a regular point of $u$, we refer to it as a {\bf singular point} of $u$.  We write $\Si(u)$ to denote the collection of all singular points of $u$, and we additionally write $\Siu$ to denote the collection of singular points of order strictly greater than $1$.

For convenience we write $\A$ to denote the {\bf set of points $x$ of high order}, that is, the $x$ such that $\text{Ord}_u(x)>1$.  In particular, in this notation $\Siu=\Si(u)\cap\A$.
\end{definition}

For our purposes, the results of \cite{GS} pertaining to $F$-connected complexes are summarized in the following theorem.

\begin{theorem}[Theorem 6.3 (i) and (ii) and parts of Theorem 6.4 of \cite{GS}]\label{thm:BigGS}
Suppose that $X$ is an $F$-connected complex.  Then the following hold:
\begin{enumerate}
\item For any positive integer $n$ and any compact $K_0\subset X$, there exists $\widehat{\epsilon}>0$ depending only on $K_0$ and $n$ so that if $u:\Omega^n\to X$ is a minimizing map from an $n$-dimensional domain, and $u(\Omega)\subset K_0$, then for all $x\in\Omega$ we have that either $\text{Ord}_u(x)=1$ or $\text{Ord}_u(x)\geq1+\widehat{\epsilon}$.
\item Let $u:\Omega\to X$ be a minimizing map, and let $x_0\in\Omega$ have $\text{Ord}_u(x_0)=1$.

Then, there exists a totally geodesic subcomplex $X_0$ of $X_{u(x_0)}$ which is isometric to $\R^m\times X_1^{k-m}$, where $X_1^{k-m}$ is an $F$-connected complex of dimension $k-m$, so that $u(B_{\sigma_0}(x_0))\subset X_0$ for some $\sigma_0>0$.  Here, $m$ is a positive integer $1\leq m\leq\min\{n,k\}$ where $n$ is the dimension of the domain manifold.

Moreover, if we write $u=(u_1,u_2):B_{\sigma_0}(x_0)\to\R^m\times X_1^{k-m}$, then $u_1$ is a harmonic map of rank $m$ at every point of $B_{\sigma_0}(x_0)$, and $\text{Ord}_{u_2}(x_0)>1$.
\item Let $u:\Omega\to X$ be a minimizing map from $\Omega$ of dimension $m$ into an $F$-connected complex.  Then the Hausdorff dimension of $\Si(u)$ is at most $m-2$.
\item Let $u:\Omega\to X$ be a minimizing map from $\Omega$ of dimension $m$ into an $F$-connected complex.  Then the Hausdorff dimension of $\A$ is at most $m-2$.
\end{enumerate}
\end{theorem}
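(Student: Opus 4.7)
The plan is to treat each item via a blow-up/tangent-map analysis driven by the monotonicity of $\text{Ord}_u(x,r,Q)$. For any $x_0\in\Omega$ and $\lambda\downarrow0$, I would define rescalings $u_\lambda(y):=u(x_0+\lambda y)$, suitably normalized by $H_u(x_0,\lambda,u(x_0))^{1/2}$. The Lipschitz bound of Theorem \ref{GS2.3} together with monotonicity of frequency gives uniform energy bounds on balls; compactness of minimizers into NPC complexes (also from \cite{GS}/\cite{KSSobolev}) then yields a subsequential limit $u_0$, a minimizing map from $\R^m$ into the tangent cone $X_{u(x_0)}$. Because the frequency of $u_\lambda$ on $B_r$ equals $\text{Ord}_u(x_0,\lambda r,Q_{x_0,\lambda r})$ which converges to $\text{Ord}_u(x_0)$, the limit $u_0$ has constant frequency, hence is homogeneous of degree $\alpha:=\text{Ord}_u(x_0)$.

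For part (2) I would exploit the fact that $\alpha=1$ forces $u_0$ to be linear: $u_0(ty)=tu_0(y)$ and energy-minimizing, so for any two rays emanating from $0$ the restriction of $u_0$ to their $2$-plane is a harmonic map into an NPC space that is homogeneous of degree $1$ in both directions; using the NPC inequality twice (the $D_X\le D_E$ comparison) gives that distances between images of rays realize the Euclidean law of cosines, which forces $u_0$ to take its image into a single flat $F\subset X_{u(x_0)}$ and to be an affine map there. Write the image plane as $V\subset F$; its dimension is the rank $m$, and the orthogonal $F$-complement together with the rest of $X_{u(x_0)}$ provides the factorization $\R^m\times X_1^{k-m}$. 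Pushing this splitting back to a neighborhood of $x_0$ uses that small perturbations of a minimizer into such a product remain in the product (contractibility/retraction using the retractions $R_{P_0,\lambda}$), so that on a small ball $u=(u_1,u_2)$ with $u_1$ affine to first order and hence of full rank $m$, while $u_2(x_0)=\text{origin}$ with $\text{Ord}_{u_2}(x_0)>1$ because the degree-$1$ part was extracted into $u_1$.

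For part (1) — the gap — suppose to the contrary that there exist minimizers $u_i:\Omega_i\to X$ with images in a fixed compact $K_0$, and points $x_i$ with $1<\text{Ord}_{u_i}(x_i)\to 1$. Pass to a tangent/blow-up limit $u_\infty$ of order exactly $1$ at the origin. By part (2), $u_\infty$ splits and maps into a flat on a small ball, meaning $\text{Ord}_{u_\infty}\equiv 1$ there. Upper semicontinuity of the order under the convergence $u_i\to u_\infty$ (which in turn rests on the monotonicity of $e^{cr^2}I$) then forces $\text{Ord}_{u_i}\to 1$ at nearby points, and combining this with the fact that order $1$ is rigid (an open condition of regularity) contradicts the assumption $\text{Ord}_{u_i}(x_i)>1$.

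Parts (3) and (4) then follow by Federer-style dimension reduction applied to the tangent-map stratification. Stratify $\Omega$ by the maximal dimension $k$ of a Euclidean factor for which some tangent map at $x$ splits as $\R^k\times v$; the stratum $\mathcal{S}^k$ has Hausdorff dimension at most $k$ by the standard argument (any $m$-symmetric tangent is a regular point by part (2), so $\mathcal{S}^{m}=\emptyset$ on $\Si(u)$; a point of $\mathcal{S}^{m-1}$ has a tangent that, after removing an $\R^{m-1}$ factor, is a nonconstant harmonic map from $\R^1$ into an NPC space, which is a geodesic and forces order $1$, so such a point is regular). Thus $\Si(u)\subset\bigcup_{k\le m-2}\mathcal{S}^k$, giving the codimension-$2$ bound; the same reasoning restricted to $\A$ gives (4). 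I expect the main obstacle to be the rigidity statement at order $1$ that underlies (2): extracting the Euclidean splitting of the tangent map from the abstract NPC and $F$-connected conditions requires careful use of the flat structure on adjacent simplices and the retractions, and it is this rigidity that then powers both the gap (1) and the stratification bounds (3), (4).
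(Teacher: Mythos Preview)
The paper does not prove this theorem. As the label indicates, the statement is quoted as Theorem 6.3 (i), (ii) and parts of Theorem 6.4 of \cite{GS}; the present paper simply records it for later use and adds only the remark that item (4), while not explicitly stated in \cite{GS}, is established there as a step in the proof of (3). There is therefore no proof in this paper to compare your proposal against.

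For what it is worth, your outline is broadly in the spirit of the Gromov--Schoen arguments: blow-up to a homogeneous tangent map, rigidity of degree-$1$ homogeneous minimizers into $F$-connected targets to obtain (2), a compactness/contradiction argument for the gap (1), and Federer dimension reduction for (3) and (4). If you want feedback on the sketch itself, the two softest spots are the transfer step in (2) --- passing from a splitting of the tangent map to a splitting of $u$ on an actual small ball is the substantive content of Gromov--Schoen's effective regularity theorems (their Sections 5--6) and does not follow from contractibility/retraction alone --- and the contradiction in (1), where upper semicontinuity of the order by itself does not preclude $\text{Ord}_{u_i}(x_i)>1$ with limit $1$; one needs the effective, quantitative form of the order-$1$ regularity together with compactness of the relevant family of tangent maps over $K_0$.
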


We remark that the last item, (4), of the above theorem is not explicitly stated as a result of \cite{GS}.  However, it is proved as one step in the proof of (3).  This fact is particularly valuable for our work in this paper, so we feel that it is appropriate to state it as a result in its own right.  

It is also worth noting that if $u:\Omega\to X$ is a minimizing map, where $X$ is a $1$-dimensional $F$-connected complex, then (2) of the above theorem implies that any point $x_0$ of order $1$ is in fact a regular point.  This is because the decomposition of (2) here gives that $u$ takes a small ball about $x_0$ into a complex isometric with $\R^1\times X^0$, where $X^0$ is a zero-dimensional simplicial complex.  Hence $u$ in fact maps into a complex isometric to $\R$ near $x_0$, which is to say that $x_0$ is regular.  In particular, for such complexes we have that $\Si(u)=\Siu$.

\begin{lemma}\label{lem:1dSi}
If $u:\Omega\to X$ is a minimizing map into a $1$-dimensional $F$-connected complex, then $\Si(u)=\Siu$.
\end{lemma}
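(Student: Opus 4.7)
The plan is to show the nontrivial inclusion $\Si(u)\subset\Siu$; the reverse inclusion holds by definition of $\Siu = \Si(u)\cap\A$. Since the order function is at least $1$ everywhere by Definition \ref{def:order}, a point $x_0\in\Si(u)\setminus\A$ must satisfy $\text{Ord}_u(x_0)=1$. So it suffices to prove that every point of order exactly $1$ is a regular point in the sense of Definition \ref{def:sing}.

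To do this, I would apply part (2) of Theorem \ref{thm:BigGS} at such an $x_0$. That result yields a $\sigma_0>0$, a positive integer $m$ with $1\leq m\leq\min\{n,k\}$, and a totally geodesic subcomplex $X_0\subset X_{u(x_0)}$ isometric to $\R^m\times X_1^{k-m}$, with $u(B_{\sigma_0}(x_0))\subset X_0$. Since $X$ is $1$-dimensional we have $k=1$, forcing $m=1$ and $k-m=0$. Thus $X_1^{k-m}$ is a $0$-dimensional $F$-connected complex, i.e.\ a (possibly uncountable) discrete set of points, and $X_0$ is isometric to a disjoint union of copies of $\R$.

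The key remaining point is to conclude that $u(B_{\sigma_0}(x_0))$ lies in a single such copy of $\R$. This follows from the continuity of $u$, guaranteed by Theorem \ref{GS2.3} applied to the minimizing map $u$ into the nonpositively curved complex $X$, together with the connectedness of the ball $B_{\sigma_0}(x_0)$: the image of a connected set under a continuous map is connected, and the connected components of $\R\times X_1^0$ are precisely the copies of $\R\times\{p\}$ for $p\in X_1^0$. Hence $u(B_{\sigma_0}(x_0))$ is contained in a subcomplex isometric to $\R$, which is a $1$-flat of $X_{u(x_0)}$, so $x_0$ is regular by Definition \ref{def:sing}.

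I do not anticipate a real obstacle here; the lemma is essentially an immediate consequence of Theorem \ref{thm:BigGS}(2) combined with the dimension restriction $k=1$, and the only substantive observation is the appeal to continuity and connectedness to ensure the image lands in one component of $X_0$. This reasoning is already flagged in the paragraph immediately preceding the lemma statement, so the proof should be brief.
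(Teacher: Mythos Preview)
Your proposal is correct and follows essentially the same argument as the paper, which is given in the paragraph immediately preceding the lemma: apply Theorem~\ref{thm:BigGS}(2) at a point of order $1$, use $k=1$ to force $m=1$ and $X_1^{k-m}$ to be $0$-dimensional, and conclude that $u$ maps a small ball into a copy of $\R$. One small remark: a $0$-dimensional $F$-connected complex is already a single point (since $F$-connected complexes are by definition nonpositively curved, hence simply connected), so your continuity/connectedness argument, while correct, is not strictly needed.
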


With this lemma and part (2) of the previous theorem, we are able to demonstrate that Theorem \ref{Thm1.2} implies Theorem \ref{Thm1.3}.  This proof is reminiscent of the proof of Theorem 2.4.(3) presented in \cite{GS}.  Before stating this proof, let us define rectifiability (although for our purposes at present it suffices to know that the countable union of $k$-rectifiable sets remains $k$-rectifiable).

\begin{definition}
Suppose that a subset $A\subset\R^m$ is of Hausdorff dimension $k$.  We then say that $A$ is {\bf countably $k$-rectifiable} if there is a countable collection of Lipschitz maps $\{f_i\}_{i=1}^\infty$, $f_i:\R^k\to\R^m$ such that $\bigcup f_i(\R^k)$ covers $A$ except for a set of $k$-dimensional Hausdorff measure $0$.
\end{definition}

\begin{proof}[Proof of Theorem \ref{Thm1.3}]
We induct on the dimension of the $F$-connected complex $X$.  When $d=1$, Lemma \ref{lem:1dSi} implies that $\Si(u)=\Siu$, and because Theorem \ref{Thm1.2} directly implies that $\Siu$ is countably $(m-2)$-rectifiable, we conclude the desired result when $d=1$.

For $d\geq2$, assume that the theorem holds for all $F$-connected complexes of dimension less than $d$.  Let $u:\Omega\to X$ be a minimizing map from $\Omega\subset\R^m$ to $X$, an $F$-connected complex of dimension $d$.  Since $\Siu$ is countably $(m-2)$-rectifiable by Theorem \ref{Thm1.2}, it suffices to show that $\Si(u)\setminus\Siu$ is countably $(m-2)$-rectifiable.  

Let $x_0\in\Si(u)\setminus\Siu$ be a singular point with $\text{Ord}_u(x)=1$.  Then, Theorem \ref{thm:BigGS} part (2) implies that there is a ball $B=B_r(x)$ so that $u|_B$ maps into a totally geodesic subcomplex $X_0\subset X$ which is isometric to $\R^k\times X_2$, where $X_2$ is an $F$-connected complex of dimension $d-k$, and $k\geq1$.  By the same result, we can consider $u=(u_1,u_2)$ as a map into this product complex where $u_1$ is smooth harmonic (in fact, analytic) and $u_2$ is energy minimizing.  Since $u_1$ is a regular map, the singular set of $u|_B$ coincides with the singular set of $(u_2)|_B$.

Since $u_2:B\to X_2$ is a map into an $F$-connected complex of dimension strictly less than $d$, by the inductive hypothesis $\Si(u_2)=\Si(u)\cap B$ is countably $(m-2)$-rectifiable.  Covering the singular set of $u$ by countably many balls $B_{\epsilon_i}(x_i)$, we conclude that $\Si(u)\setminus\Siu$ is countably $(m-2)$-rectifiable, and thus conclude that $\Si(u)$ is countably $(m-2)$-rectifiable.
\end{proof}

\subsection{Conical complexes}\label{subsec:cone}
In this paper, rather than working with general $F$-connected complexes, we will use a class of simpler complexes, the conical $F$-connected complexes.

Recall that for a nonpositively curved complex $X$, we have defined a family of maps $R_{P,\lambda}:X\to X$.  These maps (as $\lambda$ varies) are a homotopy from the constant map at $P$ to the identity on $X$, and $R_{P,\lambda}$ is Lipschitz with Lipschitz constant at most $\lambda$.  Writing this out more explicitly, to make the distinction in the next definition clearer, we have that for any $P,x,y\in X$,
\[
d(R_{P,\lambda}(x),R_{P,\lambda}(y))\leq\lambda d(x,y).
\]

\begin{definition}
We say that a nonpositively curved space $X$ is {\bf conical} with respect to the point $0_X$ if for all $0\leq\lambda\leq1$ and all $x,y\in X$ we have
\[
d(R_{0_X,\lambda}(x),R_{0_X,\lambda}(y))=\lambda d(x,y)
\]
and $R_{0_X,\lambda}$ is surjective for each $0<\lambda\leq1$.  In this case, we say that $0_X$ is a {\bf cone point} of the space.

Finally, suppose that for every $p\in X$, there is a neighborhood of $p$ isometric to a neighborhood of $0_Y$ in some conical space $Y$.  In this case, we call $X$ {\bf locally conical}.
\end{definition}

For example, tangent cones of $F$-connected complexes are conical with respect to $0$.  Importantly, for each point $p$ in an $F$-connected complex $X$, there is a small neighborhood of $p$ which is isometric to a small neighborhood of $0$ in the tangent cone.  In particular, all $F$-connected complexes are locally conical (in our terminology).

Euclidean space, of course, is conical with respect to any point.

We also remark at this point that if $0_X$ is a cone point of $X$, and $p$ is any point of $X$, there is a {\em unique} geodesic ray $\gamma_p$ with $\gamma_p(0)=0_X$ and $\gamma_p(1)=p$.  (This is in contrast to general NPC spaces, where in general there may be many such rays.)  This is because if $\gamma_1(t)$ and $\gamma_2(t)$ are two geodesic rays with $\gamma_i(0)=0$ and $\gamma_i(1)=p$, and $\omega>1$, then if we let $\lambda=\omega^{-1}$, we see that
\[
d(R_{0_X,\lambda}(\gamma_1(\omega)),R_{0_X,\lambda}(\gamma_2(\omega)))=d(\gamma_1(1),\gamma_2(1))=0
\]
by the definition of the retraction maps $R_{0_X,\lambda}$ and our choice of $\lambda$.  However, because $0_X$ is a cone point, we know in fact that
\[
0=d(R_{0_X,\lambda}(\gamma_1(\omega)),R_{0_X,\lambda}(\gamma_2(\omega)))=\lambda d(\gamma_1(\omega),\gamma_2(\omega))
\]
so that $\gamma_1(\omega)=\gamma_2(\omega)$ as well.  

Moreover, by the assumption that $R_{0_X,\lambda}$ is surjective for all $\lambda>0$, we also see that this ray is well-defined for all $\lambda\geq0$.  In particular, this surjectivity directly implies that for any $p\in X,\lambda\geq1$ there is some $q\in X$ and a geodesic ray $\gamma_q$ with $\gamma_q(0)=0_X$, $\gamma_q(1)=q$ and $\gamma_q(\lambda^{-1})=p$.  The uniqueness of the geodesic ray through $p$ implies that there is exactly one such $q$.

\begin{definition}
Throughout this paper, when $X$ is a conical nonpositively curved complex with cone point $0_X$, for any $p\in X$ and for any $\lambda\geq0$ we define the point $\lambda p$, the {\bf rescaling of $p$ about $0_X$ by $\lambda$}, by
\[
\lambda p=\gamma_p(\lambda)
\]
where $\gamma_p$ is a geodesic with $\gamma_p(0)=0_X$ and $\gamma_p(1)=p$.  The above remarks prove that this is well-defined.
\end{definition}

Because we usually work with conical complexes, we summarize a common assumption for convenience.

\begin{assumption}\label{cone} 
We assume that $u$ is a nonconstant minimizing map from $B_{64}(0)\subset\R^m$ into a conical $F$-connected complex $X$ with $u(0)=0_X$, a cone point of $X$.
\end{assumption}

\subsection{Smoothed Functionals}\label{subsec:smoothedfun}

Although we will sometimes make use of the unsmoothed energy, height, and frequency functions of Definitions \ref{def:energy} and \ref{def:height}, we most often work with the smoothed versions of these functionals, introduced in \cite{DS}.

\begin{definition}\label{def:smoothedfun}
Let $\phi:\R_{\geq0}\to\R$ be a nonincreasing Lipschitz function equal to $1$ on $\big[0,\frac{1}{2}\big]$ and equal to $0$ on $[1,\infty)$.

If $u:\Omega\to X$ is a nonconstant minimizing map from $\Omega\subset\R^m$ into a conical nonpositively curved space $X$, we define the {\bf smoothed energy} $E_\phi$, the {\bf smoothed height} $H_\phi$, and the {\bf smoothed frequency} $I_\phi$ of $u$ on the ball $B_r(x)$ to be

\begin{align}
E_\phi(x,r)&:=\int_{\Omega}|\nabla u|^2\phi\bigg(\frac{|y-x|}{r}\bigg)dy\\
H_\phi(x,r)&:=-\int_{\Omega} d^2(u(y),0_X)|y-x|^{-1}\phi'\bigg(\frac{|y-x|}{r}\bigg)dy\\
I_\phi(x,r)&:=\frac{rE_\phi(x,r)}{H_\phi(x,r)}.
\end{align}

We also define, for convenience,
\begin{equation}
\xi_\phi(x,r):=-\int_{\Omega}|\partial_{\nu_x}u(y)|^2|y-x|\phi'\bigg(\frac{|y-x|}{r}\bigg)dy,
\end{equation}
where $\nu_x(y)$ denotes the vector field $\nu_x(y):=\frac{y-x}{|y-x|}$.
\end{definition}

Notice that $H_\phi$ measures the height with respect to $0_X$.  It is not difficult to define a notion of the smoothed height where we measure the distance from other points in the complex, but we will not work with these.  Since the complexes here are conical, it is most natural and most convenient to work with respect to the cone point.  

We remark here that by Proposition \ref{prop:GS3.4}, if a minimizing map $u$ into a nonpositively curved space is constant on an open set, then it is constant everywhere.  Hence $H_\phi(x,r)$ is strictly positive whenever $r>0$ and $B_r(x)$ is contained in the domain of $u$, unless $u$ is identically equal to $0_X$.  Thus, the frequency function is also well-defined for all such $x,r$.

We have already mentioned the monotonicity formula for the unsmoothed frequency function; we will see that a monotonicity formula holds for the smoothed frequency function as well in Section \ref{Identities3}.

Similar to Assumption \ref{cone}, the following assumption is used often enough to merit summarization.

\begin{assumption}\label{bound} We fix some $\Lambda>0$ and assume that $I_\phi(0,64)\leq\Lambda$, $\phi'(t)=-2$ for $t\in(1/2,1)$ and $\phi'(t)=0$ otherwise.\end{assumption}

\subsection{Key Theorems}\label{subsec:key}

We now state two results which imply Theorem \ref{Thm1.2} (and hence Theorem \ref{Thm1.3} as well).  First, we recall the definition of the tubular neighborhood of a set.

\begin{definition}
For a set $E\subset\R^m$, the {\bf tubular neighborhood $B_r(E)$} of radius $r$ about $E$ is
\[
B_r(E):=\{y\in\R^m:d(x,y)<r\text{ for some }x\in E\}=\bigcup_{x\in E}B_r(x).
\]
\end{definition}

We then have the following results.

\begin{theorem}\label{MinkowskiBoundThm}
Under Assumptions \ref{cone} and \ref{bound}, there is a constant $C(m,X,\Lambda)$ so that
\begin{equation}
|B_\rho(\Siu\cap B_{1/8}(0))|\leq C\rho^2
\end{equation}
for all $\rho>0$.  In particular, in this case we have that $\mathcal{H}^{m-2}(\Siu\cap B_{1/8}(0))<\infty$.
\end{theorem}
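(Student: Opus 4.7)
The plan is to adapt the Naber--Valtorta covering machinery, in the form used for $Q$-valued Dirichlet minimizers in \cite{DMSV} and for Alper's optimal partition problem in \cite{A}, to this harmonic map setting. For $\rho$ bounded below the estimate is trivial, so only small $\rho$ matters. First I would use Theorem \ref{thm:BigGS}(1) to obtain an order gap: every $x \in \A \cap B_{1/8}(0)$ satisfies $\text{Ord}_u(x) \geq 1 + \widehat{\epsilon}$ for some $\widehat{\epsilon} = \widehat{\epsilon}(X,\Lambda) > 0$. Meanwhile, the bound $I_\phi(0,64) \leq \Lambda$ and the (almost) monotonicity of the smoothed frequency yield a uniform upper bound $\text{Ord}_u(x) \leq \Lambda'$ on $B_{1/8}(0)$. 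Consequently, for every $x \in \Siu \cap B_{1/8}(0)$ the map $r \mapsto I_\phi(x,r)$ has total variation across all dyadic scales $r_k := 2^{-k}$ bounded by a universal constant $\Gamma := \Lambda' - (1 + \widehat{\epsilon})$.

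The main step is then an iterative covering. For each $k \geq 3$ I would construct a family $\mathcal{C}_k$ of balls of radius $r_k$ centered on $\Siu \cap B_{1/8}(0)$ and covering this set, with the counting bound $\#\mathcal{C}_k \leq C r_k^{-(m-2)}$. Volume comparison
\[
\Big|\bigcup_{B \in \mathcal{C}_k} B\Big| \leq \#\mathcal{C}_k \cdot \omega_m r_k^m \leq C r_k^2
\]
then delivers the Minkowski estimate and, in the limit, finiteness of $\mathcal{H}^{m-2}(\Siu \cap B_{1/8}(0))$. To establish the counting bound I plan to combine two pillars of the Naber--Valtorta framework: (i) a quantitative symmetry (``cone-splitting'') statement, asserting that if $I_\phi(x, r) - I_\phi(x, \alpha r) < \delta$ at many nearby points of $\Siu$, then $u$ is close on a common ball to a map which is both homogeneous and translation invariant in the directions spanned by those points; and (ii) the rectifiable Reifenberg / $\beta_2$-covering theorem of \cite{NV,AT}, which converts the bounded total frequency drop $\Gamma$ into the required $\rho^{-(m-2)}$ packing bound, once one knows that at each scale the set is well-approximated by an $(m-2)$-plane in the sense of Jones' $\beta_2$-numbers.

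The core obstacle is step (i) in the singular target setting. For a smooth target one produces translation symmetries by taking affine combinations of homogeneous tangent maps; no such linear structure is available for maps into an $F$-connected complex. Here I would instead use the conical/retraction structure of Subsection \ref{subsec:cone} to define and compare rescalings $\lambda p$ in $X$, together with Theorem \ref{thm:BigGS}(2): a truly Euclidean-translation-invariant energy minimizer must split off the corresponding $\mathbb{R}$-factor in its image, so approximate invariance in $m-1$ independent directions would force $\text{Ord}_u(x) = 1$, contradicting membership in $\A$. This dimension-reduction/contradiction is precisely what forces $\Siu$ to accumulate along an $(m-2)$-plane and yields the sharp exponent in the Minkowski bound.

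The remaining bookkeeping is then routine DMSV-style pigeonholing: at each stage $\mathcal{C}_k$ is partitioned into ``good'' balls on which the symmetry hypothesis of the $\beta_2$-theorem applies and refinement is controlled by Reifenberg, and ``bad'' balls on which a definite amount of frequency has been spent, so that the uniform bound $\Gamma$ on total frequency drop limits how many bad refinements can occur along any chain. Summing the resulting geometric series over scales produces the uniform constant $C(m, X, \Lambda)$, completing the proof.
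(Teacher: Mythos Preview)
Your overall architecture is the paper's: a DMSV/Naber--Valtorta iterative covering, good/bad ball dichotomy governed by frequency pinching, the rectifiable Reifenberg theorem (Theorem~\ref{thm:NV}) to control packing, and termination because the total frequency drop is uniformly bounded. The paper packages the iteration as Proposition~\ref{prop7.2} (built on an intermediate covering, Lemma~\ref{lemma7.3}), but the logic is what you describe.

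There is one genuine gap, in your step~(i). You propose to get the dimension reduction from Theorem~\ref{thm:BigGS}(2), arguing that translation invariance in $m-1$ directions forces the image to split off an $\mathbb{R}$-factor and hence $\text{Ord}_u(x)=1$. But part~(2) runs the other way: it says that \emph{if} $\text{Ord}_u(x_0)=1$ then $u$ splits locally; it gives you nothing starting from translation invariance. Moreover, a minimizer that depends on a single variable need not be globally a geodesic in that variable---geodesics in $F$-connected complexes branch, so you cannot conclude order~$1$ everywhere. The paper instead uses part~(4): if the limit map is a function of one variable (or, in the cone-splitting lemma, homogeneous about $m-1$ points spanning an $(m-2)$-plane) \emph{and} carries a single point of order $>1$, then by invariance an entire $(m-1)$-dimensional affine set has order $>1$, contradicting the Hausdorff-dimension bound $\dim\A\le m-2$. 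The persistence of a high-order point in the limit is exactly where part~(1) enters (via the compactness Lemma~\ref{compactness}), not in bounding the total frequency drop as you suggest---though your use of the gap for that purpose is harmless, since positivity of $I_\phi$ already suffices to terminate the iteration.

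So: replace your appeal to (2) by the argument ``invariance $+$ one high-order point $\Rightarrow$ $(m-1)$-dimensional set in $\A$ $\Rightarrow$ contradiction with (4),'' and the sketch goes through.
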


\begin{theorem}\label{MicroRectThm}
Under Assumptions \ref{cone} and \ref{bound}, the set $\Siu\cap B_{1/8}(0)$ is countably $(m-2)$-rectifiable.
\end{theorem}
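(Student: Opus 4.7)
The plan is to reduce rectifiability of $\Siu\cap B_{1/8}(0)$ to an application of the Azzam--Tolsa criterion via Jones $\beta_2$-numbers, using the smoothed frequency as the monotone quantity whose drop controls the $\beta$-numbers. First I would stratify $\Siu$ into countably many pieces of approximately constant order: by Theorem \ref{thm:BigGS}(1) the order jumps from $1$ to at least $1+\widehat{\epsilon}$, and under Assumption \ref{bound} the order is bounded above by (essentially) $\Lambda$, so there are only countably many dyadic order thresholds to consider. Fix $\eta>0$ small and for each $E\geq 1+\widehat{\epsilon}$ define
\[
\Si_{E,\eta}:=\big\{x\in\Siu\cap B_{1/8}(0):E\leq\text{Ord}_u(x)\leq E+\eta\big\};
\]
it suffices to prove each $\Si_{E,\eta}$ is $(m-2)$-rectifiable, since $\Siu\cap B_{1/8}(0)$ is a countable union of such sets.

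Next I would establish the key analytic input: a pointwise ``$\beta_2$ bound by frequency drop'' estimate. Concretely, if $\mu$ is the $(m-2)$-dimensional Hausdorff measure restricted to $\Si_{E,\eta}$, I expect an inequality of the form
\[
\beta_{2,\mu}(x,r)^2\lesssim \text{Ord}_u(x,2r,0_X)-\text{Ord}_u(x,r/2,0_X),
\]
where $\beta_{2,\mu}(x,r)$ is the Jones number measuring the $L^2$-distance from $\mu\llcorner B_r(x)$ to the best $(m-2)$-plane. The proof of this estimate is the main obstacle. The argument uses a cone-splitting / symmetry-improvement strategy: if the frequency at $x$ is pinched at scale $r$, then $u$ is quantitatively close to a $0$-homogeneous map about $x$ (a blow-up analysis that we will prove in a later section using the smoothed monotonicity formula); if many singular points $x_i$ are pinched simultaneously and span an $m-1$-dimensional affine subspace, the resulting near-symmetries force the map to be close to a tangent map with an $(m-1)$-dimensional invariance. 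Such a map has order $1$ by Theorem \ref{thm:BigGS}(2) (applied dimension by dimension), contradicting $\text{Ord}\geq 1+\widehat{\epsilon}$ on $\Si_{E,\eta}$. Hence the singular points must cluster $L^2$-close to an $(m-2)$-plane, giving the $\beta_2$ bound.

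With this estimate in hand, the final step is a summation-of-telescoping-series argument. Integrating the $\beta_2$ bound against $dr/r$ telescopes the frequency drops and yields
\[
\int_0^1\beta_{2,\mu}(x,r)^2\,\frac{dr}{r}\lesssim \text{Ord}_u(x,2,0_X)-\text{Ord}_u(x,0^+,0_X)\leq\Lambda-1,
\]
uniformly on $\Si_{E,\eta}$. The Azzam--Tolsa theorem from \cite{AT} then gives that $\mu$-almost every point of $\Si_{E,\eta}$ lies on a countable union of Lipschitz $(m-2)$-graphs, i.e.\ $\Si_{E,\eta}$ is $(m-2)$-rectifiable. Taking countable union over $E$ and $\eta\to 0$ concludes the theorem.

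I expect the cone-splitting step underlying the $\beta_2$ bound to be the hardest: one must quantify that several well-separated pinched points force an approximate invariance, in the nonsmooth building setting where the tangent cone itself is an $F$-connected complex rather than a Euclidean space, and one must track the order drop through the splitting carefully so that the Euclidean factor coming from the spanned directions is distinguished from the genuinely singular factor $X_1^{k-m}$ produced by Theorem \ref{thm:BigGS}(2). The Minkowski bound of Theorem \ref{MinkowskiBoundThm} will presumably be used in parallel to control the measure $\mu$ and justify the covering arguments that feed into Azzam--Tolsa.
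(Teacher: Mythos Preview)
Your overall strategy---control the Jones $\beta_2$-numbers by frequency pinching, telescope, and apply Azzam--Tolsa---matches the paper's. However, the key estimate you state is too strong and does not follow from the argument you sketch. You write $\beta_{2,\mu}(x,r)^2\lesssim W_{r/2}^{2r}(x)$, a \emph{pointwise} bound depending on the pinching at the center $x$ alone. But small pinching at $x$ only says $u$ is close to homogeneous about $x$; this constrains $\Siu$ to lie near a \emph{cone} through $x$, not near an $(m-2)$-plane. The cone-splitting heuristic you describe actually requires small pinching at \emph{many} points of $\operatorname{spt}(\mu)\cap B_r(x)$, and what it yields (this is the paper's Proposition~\ref{flatbound}) is the averaged bound
\[
D_\mu^{m-2}(x,r)\;\leq\;\frac{C}{r^{m-2}}\int_{B_r(x)}W_{r/8}^{4r}(y)\,d\mu(y).
\]
With this correct estimate there is no pointwise telescoping. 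Instead one integrates once more in $x$ against $d\mu$, uses the Minkowski bound $\mu(B_r(y))\leq Cr^{m-2}$ from Theorem~\ref{MinkowskiBoundThm} to absorb the extra $\mu$-integral via Fubini, and only then telescopes to obtain $\int_{B_t(z)}\int_0^t D_\mu^{m-2}(x,s)\,\frac{ds}{s}\,d\mu(x)<\infty$, which gives the Azzam--Tolsa hypothesis $\mu$-a.e.

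Two smaller points. First, the stratification into slices $\Si_{E,\eta}$ is unnecessary: once the integrated flatness bound is in hand the argument runs uniformly on all of $\Siu$, and the paper does not stratify. Second, the contradiction in the rigidity step comes from part~(4) of Theorem~\ref{thm:BigGS} (the Hausdorff-dimension bound on $\A$), not from part~(2): the latter describes the structure at order-$1$ points and does not give a converse. If $u$ were invariant in $m-1$ directions and had a single high-order point, invariance would produce an $(m-1)$-dimensional set of high-order points, contradicting $\dim_{\mathcal H}\A\leq m-2$. The paper implements this step via a compactness argument combined with a unique-continuation lemma (Lemma~\ref{extension}) rather than a direct quantitative splitting.
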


With these in hand, we recover Theorem \ref{Thm1.2} by a covering argument.

\begin{proof}[Proof of Theorem \ref{Thm1.2}]
Let $u:\Omega\to X$ be a minimizing map for some open domain $\Omega\subset\R^m$ and $X$ an $F$-connected complex.  For each $x\in\Omega$, because $X$ is locally conical, there is a neighborhood $V$ of $u(x)$ which is isometric to a neighborhood of the origin in the tangent cone of $X$ at $u(x)$.  Choose $r_x>0$ so that $B_{512r_x}(x)\subset\Omega$.  We may then consider $u|_{B_{512r_x}(x)}$ as a minimizing map from a ball to a conical $F$-connected complex, where $u(x)$ is a cone point of this complex (as in Assumption \ref{cone}).

The collection $\{B_{r_x}(x)\}_{x\in\Omega}$ is clearly a cover of $\Omega$.

Let $K\subset\Omega$ be compact.  We cover $K$ by finitely many of our $B_{r_{x_i}}(x_i)$.  Among this finite collection, we find the maximum value of $I_\phi(x_i,512r_{x_i})=:\Lambda$, where $\phi$ is as in Assumption \ref{bound}.  We apply a rescaled version of Theorem \ref{MinkowskiBoundThm} to conclude that for $B_i:=B_{r_{x_i}}(x_i)$,
\[
|B_\rho(\Siu\cap B_i)|\leq C \Big(\frac{1}{8r_{x_i}}\Big)^m \rho^2.
\]
Summing this estimate over all of the $B_i$, we obtain that
\[
|B_\rho(\Siu\cap K)|\leq C \rho^2
\]
where $C$ depends on the number of balls, on their radii, and on $\Lambda,m$, and $X$.  All of these are determined by $K$ and $u$, which completes the desired Minkowski-type bound of Theorem \ref{Thm1.2}.  It is then easy to see that $\mathcal{H}^{m-2}(\Siu\cap K)<\infty$.

Applying Theorem \ref{MicroRectThm} to each $B_i$, we see that $\Siu\cap B_i$ is countably $(m-2)$-rectifiable, and hence it is clear that $\Siu\cap K$ is as well.

Taking a sequence of compact sets $K_1\subset K_2\subset\dots$ so that $\bigcup_{i=1}^\infty K_i=\Omega$, we see that $\Siu$ is a countable union of countably $(m-2)$-rectifiable sets, and is hence countably $(m-2)$-rectifiable as well.
\end{proof}

\subsection{Sketch of the Paper's Organization}\label{subsec:sketch}
Before discussing the theorems of \cite{AT} and \cite{NV}, we must define the following quantity.

\begin{definition}
Let $\mu$ be a Radon measure on $\R^m$ and let and $k\in\{0,1,\dots,m-1\}$.  For $x\in\R^m$ and $r>0$, we define the {\bf $k^{\text{th}}$ mean flatness} of $\mu$ in the ball $B_r(x)$ to be
\[
D_\mu^k(x,r):=\inf_Lr^{-k-2}\int_{B_r(x)}\text{dist}(y,L)^2\phantom{i}d\mu(y),
\]
where the infimum is taken over all affine $k$-planes $L$.
\end{definition}

For our purposes, this is related to rectifiability by the following theorem of \cite{AT}.  In this, $\mathcal{H}^k$ denotes the $k$-dimensional Hausdorff measure on $\R^m$.

\begin{theorem}[\cite{AT}, Corollary 1.3]\label{thm:AT}
Let $S\subset\R^m$ be $\mathcal{H}^k$-measurable with $\mathcal{H}^k(S)<\infty$ and consider $\mu=\mathcal{H}^k\llcorner S$.  Then $S$ is countably $k$-rectifiable if and only if
\[
\int_0^1D_\mu^k(x,s)\frac{ds}{s}<\infty\hspace{5mm}\text{for }\mu\text{-a.e. }x.
\]
\end{theorem}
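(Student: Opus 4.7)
Since this statement is cited as Corollary 1.3 of \cite{AT}, its complete proof lies outside the scope of the present paper; I outline the strategy I would pursue. Two implications must be established.

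For the forward direction (rectifiable $\Rightarrow$ integral finite), the plan is to exploit that for $\mathcal{H}^k$-a.e.\ $x\in S$ there is a unique approximate tangent $k$-plane $T_x$, and the upper $k$-density $\limsup_{r\to 0}\mu(B_r(x))/r^k$ is finite. By the structure theorem for rectifiable sets, one covers $S$, up to an $\mathcal{H}^k$-null set, by countably many $C^1$ graph pieces via a Whitney/Lusin selection. On each such piece, a direct computation yields a quantitative decay $D_\mu^k(x,r)\leq C r^{2\alpha}$ for some $\alpha>0$, so that $\int_0^1 D_\mu^k(x,s)\,ds/s$ is finite at $\mu$-a.e.\ $x$.

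For the reverse direction (integral finite $\Rightarrow$ rectifiable), I would first reduce to a bounded setting by defining $S_\tau:=\{x\in S:\int_0^1 D_\mu^k(x,s)\,ds/s\leq\tau\}$; since $\mu(S\setminus\bigcup_\tau S_\tau)=0$, it suffices to show each $S_\tau$ is rectifiable. Next, on a Christ-type dyadic cube decomposition of $S_\tau$, I would run a corona/stopping-time argument: starting from a top cell $Q_0$ with best-fit $k$-plane $L_{Q_0}$, descend into subcells until one of three criteria fires (the best-fit plane tilts too much, the density drops significantly, or $D_\mu^k$ exceeds a small fixed threshold). The integrability assumption together with standard geometric lemmas should produce Carleson-type packing bounds on the stopping tops.

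On each corona tree the plan is a Reifenberg-type construction: stitch the best-fit planes across scales into a single Lipschitz graph $\Gamma$ over $L_{Q_0}$, controlling the Lipschitz constant by the summed plane-tilts at successive scales, which are in turn controlled by the $\beta_2$-data $D_\mu^k$ via a Pythagoras-type estimate. Each tree then captures its portion of $S$ inside a Lipschitz graph, and summing over trees and letting $\tau\to\infty$ recovers rectifiability of $S$. The main obstacle is the Lipschitz-graph construction inside each tree: converting $L^2$-averaged flatness data into pointwise graph regularity is precisely what distinguishes this $\beta_2$-characterization from the easier $\beta_\infty$-Reifenberg setting, and it is the delicate multi-scale analysis of \cite{AT}, building on David-Semmes and Tolsa's earlier codimension-one work, that carries this step through.
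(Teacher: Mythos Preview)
The paper does not provide a proof of this theorem: it is quoted verbatim as Corollary 1.3 of \cite{AT} and used as a black box (in Section \ref{Rectifiability8}, to conclude rectifiability of $\Siu\cap B_{1/8}(0)$ once the integral condition has been verified). So there is no ``paper's own proof'' to compare against; you correctly identified that the argument lives entirely in \cite{AT}.

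Your sketch of the reverse direction is in the right spirit --- a corona/stopping-time decomposition together with a Reifenberg-type graph construction controlled by the $\beta_2$ data is indeed the architecture of the Azzam--Tolsa argument. One point in your forward direction deserves correction: covering $S$ by $C^1$ pieces does \emph{not} in general yield a power decay $D_\mu^k(x,r)\leq Cr^{2\alpha}$; a $C^1$ graph only gives $D_\mu^k(x,r)\to 0$ with no rate, and that alone does not imply $\int_0^1 D_\mu^k(x,s)\,ds/s<\infty$. The actual proof of the necessary condition (due to Tolsa) is more delicate and uses a square-function estimate rather than a pointwise decay rate. Since the present paper only invokes the sufficient direction (finite integral $\Rightarrow$ rectifiable), this inaccuracy does not affect anything downstream.
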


The rough intuition here is that the mean flatness detects how close a measure is to being supported on a $k$-plane, while rectifiable sets must have tangent planes almost everywhere.  Hence, as we ``zoom in" on almost any point $x$ of a rectifiable set, we expect the set, at small scales, to lie close to the tangent plane at $x$.  We use this result in Section \ref{Rectifiability8} to finally show that $\Siu$ is rectifiable, proving Theorem \ref{MicroRectThm}.

The next result, of \cite{NV}, is used in the covering arguments of Section \ref{Minkowski7}.  These arguments allow us to conclude that $\Siu$ has finite $m-2$ dimensional Hausdorff measure, and to prove Theorem \ref{MinkowskiBoundThm}.

\begin{theorem}[\cite{NV}, Theorem 3.4]\label{thm:NV}
Fix $k\leq m\in\N$, let $\{B_{s_j}(x_j)\}_{j\in J}\subseteq B_2(0)\subset\R^m$ be a sequence of pairwise disjoint balls centered in $B_1(0)$, and let $\mu$ be the measure
\[
\mu=\sum_{j\in J}s_j^k\delta_{x_j}.
\]

Then, there exist constants $\delta_0=\delta_0(m)$ and $C_R=C_R(m)$ depending only on $m$ such that if for all $B_r(x)\subset B_2(0)$ with $x\in B_1(0)$ we have the integral bound
\[
\int_{B_r(x)}\bigg(\int_0^rD_{\mu}^k(y,s)\frac{ds}{s}\bigg)d\mu(y)<\delta_0^2r^k
\]
then the measure $\mu$ is bounded by
\[
\mu(B_1(0))=\sum_{j\in J}s_j^k\leq C_R.
\]
\end{theorem}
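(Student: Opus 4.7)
The plan is to prove this as a discrete rectifiable-Reifenberg result: the integrated mean-flatness bound forces the support of $\mu$ to lie inside a Lipschitz graph over some $k$-plane whose $\mathcal{H}^k$-measure is controlled, and then the disjointness of the balls $B_{s_j}(x_j)$ turns the graph measure bound into the desired bound on $\sum s_j^k$. The first step is, for each $x \in B_2(0)$ and each dyadic scale $r = 2^{-i}$, to fix a $k$-plane $L(x,r)$ achieving (or nearly achieving) the infimum in the definition of $D_\mu^k(x,r)$, so that
\[
r^{-k-2}\int_{B_r(x)} \mathrm{dist}(y,L(x,r))^2\, d\mu(y) \leq 2 D_\mu^k(x,r).
\]
I would then prove a comparability estimate: if $B_{r/2}(y) \subset B_r(x)$ and $\mu(B_{r/2}(y)) \gtrsim r^k$, then the squared angle between $L(x,r)$ and $L(y,r/2)$ is controlled linearly by $D_\mu^k(x,r)+D_\mu^k(y,r/2)$. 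This is the standard second-moment/covariance manipulation underlying all Reifenberg-type arguments: the best-fit plane is an eigenspace of a covariance matrix, and small $L^2$-flatness forces nearby covariances to have close top eigenspaces.

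With this tilting estimate in hand, I would run a stopping-time construction in the spirit of Naber--Valtorta. Fix a reference plane $L_0 = L(0,1)$ and build a map $\sigma$ on the support of $\mu$ into $L_0$ by iteratively projecting onto the best-fit plane at each scale, stopping whenever $\mu$ has too little mass in a ball (which only happens when $\mu$ is already sparse, allowing a separate small-mass estimate) or when the flatness suddenly becomes large. The telescoping sum of tilting errors from scale $1$ down to scale $0$ is controlled by $\int_0^1 D_\mu^k(\cdot,s)\frac{ds}{s}$, which is exactly the Dini-type quantity appearing in the hypothesis. Summing the squared tilts against $\mu$ and using the global bound $\delta_0^2$ shows that the resulting map is Lipschitz with constant $\leq C(m)$ on the support of $\mu$, producing a $k$-dimensional Lipschitz graph $\Gamma$ over $L_0$ in $B_2(0)$ that contains the support of $\mu$ up to an error set of mass at most $C r^k$ at each scale.

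The final step is to convert the inclusion $\mathrm{supp}\,\mu \subset \Gamma$ into the announced bound on $\mu(B_1(0))$. Because the balls $B_{s_j}(x_j)$ are pairwise disjoint, each contributes a region of $\Gamma$ of $\mathcal{H}^k$-measure $\gtrsim s_j^k$ (the Lipschitz graph over $L_0$ has $\mathcal{H}^k$ density comparable to that of $L_0$), so
\[
\sum_{j \in J} s_j^k \;\lesssim\; \mathcal{H}^k(\Gamma \cap B_2(0)) \;\leq\; C(m),
\]
which is $C_R$. The main obstacle is the stopping-time/Reifenberg graph construction itself: choosing the stopping criteria so that the aggregated tilting errors, summed against $\mu$ instead of against a smooth reference measure, are controlled by exactly the integrated mean-flatness quantity in the hypothesis, rather than by a pointwise flatness bound. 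This is where the strength of the integral (rather than pointwise) Dini condition is used, and it is what forces the delicate balance between the constants $\delta_0$ and $C_R$.
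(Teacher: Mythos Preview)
The paper does not prove this theorem at all: it is quoted verbatim from \cite{NV} (their Theorem~3.4) and used as a black box in Section~\ref{Minkowski7}. So there is no ``paper's own proof'' to compare against; your proposal is an attempt to reprove the Naber--Valtorta discrete rectifiable-Reifenberg theorem itself, which is well beyond the scope of this paper.

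That said, your sketch is in the right spirit but oversimplifies the actual argument in \cite{NV} at a crucial point. The claim that the stopping-time construction produces a \emph{single} Lipschitz graph $\Gamma$ containing all of $\mathrm{supp}\,\mu$ (up to small error) is too optimistic: the Dini condition on the integrated $D_\mu^k$ does not give pointwise control of the tilt at every center and scale, so one cannot simply telescope the tilts along a single chain to get a global Lipschitz map. The Naber--Valtorta proof instead builds, at each dyadic scale, an approximating manifold via a Reifenberg-type gluing of the local best-fit planes, and then controls the \emph{total excess} of the map between consecutive scales in $W^{1,2}$ (or $W^{1,p}$) rather than in $L^\infty$; the packing bound $\sum s_j^k \le C_R$ comes from an inductive volume estimate on these approximating manifolds, not from a single Lipschitz graph at the end. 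Your final step, reading off $\sum s_j^k \lesssim \mathcal{H}^k(\Gamma\cap B_2)$ from disjointness, would also need that each $x_j$ actually lies on $\Gamma$, which the stopping construction as you describe it does not guarantee for the centers that stop early.
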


These theorems motivate the main purpose of this paper's computations and lemmas, as they both require us to bound the mean flatness of certain measures.  

Our main tool for this purpose is the smoothed frequency function $I_\phi$.  For energy minimizers, this is a nondecreasing function of $r$; a fact we prove in Section \ref{Identities3} using the variational formulae of Section \ref{VarFormulae3}.  Moreover, for a map $u$ into a conical nonpositively curved space, $I_\phi$ detects how close $u$ is to a homogeneous map (cf. Definition \ref{def:ConHom}).

In particular, in Section \ref{Pinching4} we show that $u$ is homogeneous of degree $\alpha$ about the point $x$ if, and only if, $I_\phi(x,r_1)=\alpha=I_\phi(x,r_2)$ for $r_1<r_2$ (cf. Lemma \ref{homchar}).  This is a motivation for the following definition.

\begin{definition}\label{def:freqpinch}
For $u$ and $\phi$ as in Assumptions \ref{cone} and \ref{bound}, and for $x\in B_1(0)$, $0<s
\leq r$ we define
\begin{equation}
W_s^r(x):=I_\phi(x,r)-I_\phi(x,s)
\end{equation}
to be the {\bf frequency pinching} of $u$ between the radii $s$ and $r$.
\end{definition}

Section \ref{Pinching4} shows that, in a sense, the frequency pinching quantitatively controls how far a map is from being homogeneous.  In Section \ref{Jones5} we use these quantitative bounds to show the following result (which also appears as Proposition \ref{flatbound}).

\begin{proposition}
Under Assumptions \ref{cone} and \ref{bound}, there exists a positive constant $C(\Lambda,m,X)$ so that, for a finite nonnegative Radon measure $\mu$, and a ball $B_{r/8}(x_0)$ such that $B_{r/8}(x_0)\cap\A$ is nonempty, then
\[
D_\mu^{m-2}(x_0,r/8)\leq\frac{C}{r^{m-2}}\int_{B_{r/8}(x_0)}W_{r/8}^{4r}(x)\phantom{i}d\mu(x).
\]
\end{proposition}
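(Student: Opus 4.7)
The plan is to construct an explicit affine $(m-2)$-plane $L$ adapted to a high-order reference point, and estimate $\int \mathrm{dist}(y,L)^2 \, d\mu$ in terms of the frequency pinching. Assume $\mu(B_{r/8}(x_0))>0$ (otherwise the statement is trivial), and fix $q\in B_{r/8}(x_0)\cap\A$, which exists by hypothesis. Form the positive semidefinite second-moment matrix centered at $q$,
\[
T_{ij}:=\int_{B_{r/8}(x_0)} (y_i-q_i)(y_j-q_j)\,d\mu(y),
\]
and let $\lambda_1\le\cdots\le\lambda_m$ denote its eigenvalues with orthonormal eigenvectors $e_1,\dots,e_m$. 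Choose $L:=q+\mathrm{span}(e_3,\dots,e_m)$; then $\int_{B_{r/8}(x_0)} \mathrm{dist}(y,L)^2\,d\mu(y)=\lambda_1+\lambda_2$, and after rewriting $D_\mu^{m-2}(x_0,r/8)$ via its definition, the proposition reduces to proving
\[
\lambda_1+\lambda_2 \le C\,r^2 \int_{B_{r/8}(x_0)} W_{r/8}^{4r}(y)\,d\mu(y).
\]

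The heart of the argument is a pointwise quantitative cone-splitting estimate: there exists a two-dimensional subspace $V_q\subset\R^m$, depending only on the symmetries of $u$ at $q$, such that for every unit vector $v\in V_q$ and every $y\in B_{r/8}(x_0)$,
\[
|\langle y-q,v\rangle|^2 \le C\, r^2\, W_{r/8}^{4r}(y).
\]
Granting this, the variational characterization of eigenvalues gives $\lambda_1+\lambda_2 \le \langle T w_1, w_1\rangle + \langle T w_2, w_2\rangle$ for any orthonormal basis $\{w_1,w_2\}$ of $V_q$, and integration against $\mu$ immediately yields the stated bound. The existence of such a $2$-dimensional $V_q$ is dictated by Theorem \ref{thm:BigGS}(4): if $u$ were translation-invariant near $q$ in $m-1$ linearly independent directions, then $\A$ would contain an $(m-1)$-dimensional affine piece near $q$, contradicting $\dim_{\mathcal{H}}\A\le m-2$. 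Thus the approximate symmetries of $u$ at the high-order point $q$ span a subspace of dimension at most $m-2$, and one may take $V_q$ to be any $2$-dimensional complement.

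The main obstacle is proving this pointwise cone-splitting estimate in the singular target setting. Its qualitative counterpart is Lemma \ref{homchar}, which characterizes $u$ being radially homogeneous about $y$ on the annulus $[r/8,4r]$ by $W_{r/8}^{4r}(y)=0$; quantitatively, smallness of $W$ should force $\partial_{\nu_y}u$ to be close to a radial multiple of $u$ in a weighted $L^2$ sense. Combining near-homogeneity at $y$ with the cone-point structure at $u(q)$---so that the retractions $R_{u(q),\lambda}$ act as exact dilations and can be used as competitors in variational inequalities---should give near-translation invariance of $u$ in the direction $(y-q)/|y-q|$, at cost proportional to $r^2\,W_{r/8}^{4r}(y)$. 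Because the target is NPC rather than Euclidean one cannot simply take differences of $X$-valued maps, and the technical core consists of replacing such differences with comparison maps built from the retractions $R_{P,\lambda}$ and the variational identities of Sections \ref{VarFormulae3} and \ref{Identities3}, while tracking all constants so that the final dependence is only on $m$, $X$, and $\Lambda$.
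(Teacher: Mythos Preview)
Your approach diverges from the paper's and contains a genuine gap: the pointwise cone-splitting estimate
\[
|\langle y-q,v\rangle|^2 \le C\,r^2\,W_{r/8}^{4r}(y)\qquad\text{for all }y\in B_{r/8}(x_0),\ v\in V_q\text{ unit},
\]
is asserted but not proved, and the heuristic you give for it does not work. First, $u(q)$ is not a cone point of $X$ (the cone point is $0_X$), so the retractions $R_{u(q),\lambda}$ are only $\lambda$-Lipschitz, not exact dilations, and cannot be used to manufacture the needed variational identities. Second, and more seriously, near-homogeneity at $y$ alone (small $W(y)$) together with $q\in\A$ does \emph{not} force $y-q$ to lie near any fixed $(m-2)$-plane through $q$: in the rigid case, Lemma~\ref{twohomogeneous} requires homogeneity at \emph{both} endpoints to conclude translation invariance along the segment, and $q\in\A$ says nothing about the pinching at $q$. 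Your definition of $V_q$ as ``a complement to the approximate symmetries'' is correspondingly ill-posed, since there is no well-defined linear subspace of approximate symmetry directions at a generic high-order point.

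The paper's argument is structurally different. It diagonalizes the second-moment form about the barycenter $\bar x$ of $\mu$ (not about $q$), and uses the eigenvector identity \eqref{eigeneq} to write, for each eigenvector $v_j$,
\[
-\lambda_j\,\partial_{v_j}u(z)=\int_{B_{r/8}(x_0)}((x-\bar x)\cdot v_j)\bigl(\nabla u(z)\circ(z-x)-\alpha u(z)\bigr)\,d\mu(x),
\]
which after squaring and Cauchy--Schwarz gives $\lambda_j|\partial_{v_j}u(z)|^2\le\int|\nabla u(z)\circ(z-x)-\alpha u(z)|^2\,d\mu(x)$. Summing over $j=1,\dots,m-1$ and integrating in $z$ over an annulus, the right-hand side is controlled by Proposition~\ref{prop4.3} and Theorem~\ref{thm4.2}. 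The hypothesis $B_{r/8}(x_0)\cap\A\neq\varnothing$ enters only through a compactness argument (Lemmas~\ref{compactness} and~\ref{extension}) showing that the left-hand factor $\int\sum_{j=1}^{m-1}|\partial_{v_j}u|^2$ is bounded below by a positive constant; it is never used to single out a canonical $2$-plane $V_q$.
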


This provides a bound for the mean flatness in terms of the frequency pinching, which will allow us  to apply Theorems \ref{thm:AT} and \ref{thm:NV}.

Section \ref{Qualitative6} proceeds by comparing maps to homogeneous maps in a qualitative sense.  In particular, we show that if $u$ has small frequency pinching at points spanning an $(m-2)$-dimensional affine subspace $L$, then $\A$ lies close to $L$, and that the frequency function is almost constant along $L$.  



In Section \ref{Minkowski7} we combine the bound of Proposition \ref{flatbound} with the results of Section \ref{Qualitative6} to prove Theorem \ref{MinkowskiBoundThm} using a covering lemma.  We then use Proposition \ref{flatbound} again in Section \ref{Rectifiability8} to bound the mean flatness of the $(m-2)$-dimensional Hausdorff measure on $\Siu$, and thus we apply Theorem \ref{thm:AT} to prove Theorem \ref{MicroRectThm}.

\section{Variational Formulae in Conical $F$-connected complexes}\label{VarFormulae3}
In this section, we work with a conical $F$-connected complex $X$, with fixed cone point $0_X$.  We consider this complex to be isometrically embedded in a Euclidean space $\R^N$ in the sense that the Riemannian metric induced on each simplex of $X$ by the embedding agrees with the metric on that simplex (as in \cite{GS}).  We further assume that $0_X$ is taken to $0$, and that the image of the embedding is a cone in $\R^N$.  In particular, we take the embedding so that geodesic rays from $0_X$ are taken to rays from $0$ in $\R^N$.  (This follows the framework of \cite{GS}, especially of their section 3.  Other geodesic rays in $X$ will not, in general, be taken to lines in $\R^N$, but to piecewise linear segments.)  For vectors $v,w$ in $\R^N$, we denote the inner product between them by $\la v,w\ra$.  Here, we will always take $v,w$ to be tangent to $X$.

We shall use the notation $|v|$ to denote the Euclidean length of a vector, and the notation $d(x,y)$ to denote the distance between $x,y$ in the $F$-connected complex $X$.  Because geodesics from $0_X$ are taken to rays in $\R^N$, for $x\in X$ we have $|x|=d(x,0_X)$.

Using this embedding allows us to more easily compute the derivative of the distance-squared function on $X$, $d^2(0_X,x)$, because it agrees with the Euclidean distance $|x|^2$.  One may reasonably object that this treatment is not intrinsic to $X$, but as long as the embedding is chosen to be isometric in the sense used here, with $0_X$ mapped to $0$, the derivative of the distance-squared, on $X$, is well-defined.  Indeed, this gradient can be described at a point $p\in X$ in a purely intrinsic fashion by regarding it as twice the velocity of the unique geodesic $\gamma_p:\R_{\geq0}\to X$ with $\gamma_p(0)=0_X$ and $\gamma_p(1)=p$.  (This is precisely analogous to the situation in Euclidean space, of course.)

\begin{convention}
Because there are a number of distinct operations in this section (and paper more generally) that can be regarded as taking products, we introduce some slightly nonstandard notation to disambiguate these.

\begin{enumerate}
\item For two vectors $v,w$ tangent to $\Omega$ at a point $x$, (i.e. the vectors lie in $\R^m$), we write their inner product as $v\cdot w$.
\item For two vectors $v,w$ tangent to $X$ at a point $p$, we write their inner product as $\la v,w\ra$.
\item We write $\nabla f \circ \nabla g$ to denote $\nabla(f\circ g)$, expanded using the chain rule.
\item Finally, we occasionally write $\nabla f \circ v$ to denote $\partial_v f$ for a vector $v$ tangent to the domain of $f$.  We do this when we wish to emphasize that the directional derivative may be computed by evaluating the linear operator $\nabla f$ at the vector $v$. 
\end{enumerate}


(Note that $\nabla f\circ\nabla g$ is the same as the ordinary matrix multiplication of the differentials $\nabla f$ and $\nabla g$ in this setting; we simply wish to emphasize the distinction between this and the inner product $\nabla f\cdot \nabla g$, which makes sense only if $f,g$ are real-valued functions sharing $\Omega$ as their source.)
\end{convention}



The two principal variational identities we will use in the next section are given by the following two lemmas.

\begin{lemma}\label{extvar}
For any minimizing map $u:\Omega\to X$ (with image lying in a bounded subset of $X$), where $X$ is a conical $F$-connected complex with cone point $0_X$, the following holds in the weak sense:
\[
\Delta d^2(u,0_X)=2|\nabla u|^2.
\]

In other words, for every smooth, compactly supported $\phi:\Omega\to\R$, 
\[
2\int_\Omega\phi(x)|\nabla u|^2(x)dx+\int_\Omega\nabla\phi\cdot\nabla d^2(u(x),0_X)dx=0.
\]
\end{lemma}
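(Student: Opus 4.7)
The strategy is to exploit the conical structure of $X$ to build an explicit one-parameter family of admissible variations of $u$ and then to read off the identity from the first-variation condition. Morally this is an outer variation that rescales the image of $u$ radially about $0_X$.

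First, I would record two consequences of the chosen embedding. Because geodesic rays from $0_X$ are sent to Euclidean rays through the origin, we have $d^2(u(x), 0_X) = |u(x)|^2$ in $\R^N$, and for any $\lambda \geq 0$ and $p \in X$ the intrinsic rescaling $\lambda p$ agrees with Euclidean scalar multiplication in $\R^N$. Consequently $\partial_i d^2(u, 0_X) = 2\la u, \partial_i u\ra$ at a.e.\ $x$ where $u$ is differentiable.

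Next, for $\phi \in C_c^\infty(\Omega)$ and $|t|$ small enough that $1 + t\phi > 0$ on $\Omega$, I define
\[
u_t(x) := (1 + t\phi(x))\, u(x),
\]
interpreted simultaneously as the conical rescaling in $X$ and as Euclidean scalar multiplication in $\R^N$. The surjectivity of $R_{0_X, \lambda}$, together with its extension to all $\lambda \geq 0$, places $u_t(x)$ in $X$; so $u_t \in H^1(\Omega, X)$ with $u_t = u$ on $\partial\Omega$. Minimality of $u$ then yields $E(u_t) \geq E(u)$ for all such $t$, which forces $\frac{d}{dt}\big|_{t=0} E(u_t) = 0$.

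Finally, I compute the derivative. From $\partial_i u_t = t(\partial_i \phi) u + (1 + t\phi) \partial_i u$, and using that the embedding is isometric on each simplex so that the energy density is the Euclidean $\sum_i |\partial_i u_t|^2$, we obtain at $t = 0$
\[
\frac{d}{dt}\bigg|_{t=0} |\nabla u_t|^2 = 2\sum_{i=1}^m \la \partial_i u,\, (\partial_i \phi) u + \phi\, \partial_i u\ra = \nabla \phi \cdot \nabla d^2(u, 0_X) + 2\phi\, |\nabla u|^2.
\]
Integrating and using the local Lipschitz continuity of $u$ from Theorem \ref{GS2.3} (together with the boundedness of the image) to justify differentiation under the integral, then setting the result equal to zero, gives the weak identity. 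The main obstacle is verifying that $u_t$ is a legitimate competitor: in a general nonpositively curved complex the rescaled map might leave $X$, and the extrinsic formula for the energy density could fail to capture the intrinsic energy. The conical hypothesis is used precisely here, both to place $u_t(x)$ in $X$ for each $x$ and to identify the intrinsic energy with the Euclidean one via the chosen embedding.
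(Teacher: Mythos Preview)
Your proof is correct and follows essentially the same approach as the paper: both construct the outer variation $u_t(x)=(1+t\phi(x))\,u(x)$ by radial rescaling about the cone point and read off the identity from $\frac{d}{dt}\big|_{t=0}E(u_t)=0$. Your presentation is slightly more streamlined in exploiting the Euclidean embedding to write $\partial_i u_t$ directly and in invoking the interior Lipschitz bound (Theorem~\ref{GS2.3}) to justify differentiation under the integral, whereas the paper retains the retraction-map notation $R_{0_X,1+t\varphi}$ and appeals to dominated convergence, but the substance is identical.
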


This lemma is essentially the differential inequality of Proposition 2.2 of \cite{GS}, adapted to the case of a conical target space.  In this context, rather than a differential inequality, we are able to prove that this is an identity; this is a major benefit of working with a conical target.

\begin{lemma}\label{intvar}
For any smooth compactly supported $\phi:\Omega\to\R^m$ (where $\Omega$ is a bounded domain of $\R^m$) and any minimizing map $u:\Omega\to X$ into $X$ a conical $F$-connected complex, we have that
\[
2\int_\Omega\la \nabla u,\nabla u\circ\nabla \phi\ra dy-\int_\Omega|\nabla u|^2\text{div}(\phi)dy=0.
\]
\end{lemma}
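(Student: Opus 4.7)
The proof is a standard inner-variation (domain reparametrization) argument. The hypothesis that $u$ is energy-minimizing is used only through its stationarity: if I produce a one-parameter family of admissible competitors $u_t$ with $u_0 = u$, the same boundary trace, and depending smoothly enough on $t$, then $\frac{d}{dt}\big|_{t=0} E_{u_t}(\Omega) = 0$. Everything else is a change-of-variables computation.

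My plan is as follows. Given the compactly supported vector field $\phi:\Omega\to\R^m$, set $F_t(y) := y + t\phi(y)$. Since $\phi$ is compactly supported in $\Omega$, for $|t|$ sufficiently small $F_t$ is a $C^\infty$ diffeomorphism of $\Omega$ onto itself which equals the identity outside $\mathrm{supp}(\phi)$. Define the competitor $u_t := u \circ F_t^{-1}$. Then $u_t\in H^1(\Omega,X)$, $u_t = u$ on $\partial\Omega$ (indeed in a neighborhood of $\partial\Omega$), and $u_0 = u$, so by Definition \ref{def:harmonic}, $E_{u_t}(\Omega) \geq E_u(\Omega)$ for all $|t|$ small; since the inequality runs both ways, differentiation in $t$ at $t=0$ gives zero.

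To compute $\frac{d}{dt}\big|_{t=0}E_{u_t}(\Omega)$, I change variables $z = F_t(y)$ in the integral $\int_\Omega |\nabla_z u_t(z)|^2\, dz$. By the chain rule $\nabla_z u_t(z) = \nabla u(y)\circ (\nabla F_t(y))^{-1}$ where $y = F_t^{-1}(z)$, and $dz = \det(\nabla F_t(y))\, dy$. Expanding $\nabla F_t(y) = I + t\nabla\phi(y)$ to first order gives $(\nabla F_t)^{-1} = I - t\nabla\phi + O(t^2)$ and $\det(\nabla F_t) = 1 + t\,\mathrm{div}(\phi) + O(t^2)$. Plugging in and collecting the $O(t)$ terms,
\[
E_{u_t}(\Omega) = \int_\Omega |\nabla u|^2\, dy + t\int_\Omega\bigl(|\nabla u|^2\,\mathrm{div}(\phi) - 2\la\nabla u,\nabla u\circ\nabla\phi\ra\bigr)\, dy + O(t^2).
\]
Setting the linear term to zero yields the desired identity.

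The only non-cosmetic issue is justifying the first-order expansion for an $H^1$ map into a singular space. This is handled by Theorem \ref{GS2.3}: since $X$ is nonpositively curved, $u$ is locally Lipschitz in the interior of $\Omega$, so $|\nabla u|\in L^\infty_{\mathrm{loc}}$ on $\mathrm{supp}(\phi)$; consequently the integrands above are dominated uniformly in $t$ and differentiation under the integral is legitimate. The pointwise identity $\nabla_z u_t(z) = \nabla u(y)\circ(\nabla F_t(y))^{-1}$ for $H^1$ maps composed with smooth diffeomorphisms is standard, and the inner product $\la\cdot,\cdot\ra$ is taken in the ambient $\R^N$ in which $X$ is isometrically embedded, so the computation is just calculus in $\R^N$. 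I expect this regularity issue to be the only conceptual step requiring a sentence of justification; the rest of the argument is the routine Taylor expansion sketched above.
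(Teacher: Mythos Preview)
Your proposal is correct and is essentially the same inner-variation argument as the paper's proof; the only cosmetic difference is that you set $u_t = u\circ F_t^{-1}$ while the paper uses $u_t = u\circ\Phi_t$, which amounts to replacing $\phi$ by $-\phi$ and leads to the same first-order term. Your invocation of Theorem~\ref{GS2.3} to justify differentiation under the integral is stronger than needed (the paper simply bounds the integrands by the energy density and applies dominated convergence), but it certainly works.
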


We prove the former lemma by considering variations of the map on the target space $X$, and the latter by considering variations on the domain $\Omega$.  

\begin{proof}[Proof of Lemma \ref{extvar}]
The computations here are based around the scaling maps $R_{0_X,\lambda}(p)=:\lambda p$, in the notation introduced in the previous section.  Note that because $X$ is conical, these maps are defined for all $\lambda\geq0$ (rather than only for $0\leq\lambda\leq1$) and that we have the equality
\[
d(R_{0_X,\lambda}(p),R_{0_X,\lambda}(q))=\lambda d(p,q).
\]

We define $u_t(x)=R_{0_X,1+t\varphi(x)}(u(x))$ for some smooth $\varphi(x):\Omega\to\R$.  Its partial derivatives satisfy 
\begin{align*}
\bigg|\frac{\partial u_t}{\partial x_i}(x)\bigg|^2=|(\partial_{\partial u/\partial x_i}&R_{0_X,1+t\varphi(x)})(u(x))|^2+t\frac{\partial\varphi}{\partial x_i}\frac{\partial d^2(R_{0_X,1+t\varphi(x)}(u(x),0_X)}{\partial x_i}\\&+t^2\bigg(\frac{\partial\varphi}{\partial x_i}\bigg)^2\bigg|\frac{\partial R_{0_X,1+t\varphi(x)}}{\partial\lambda}(u(x))\bigg|^2
\end{align*}
where $\frac{\partial}{\partial\lambda}$ denotes the derivative of $R_{0_X,\lambda}(p)$ with respect to $\lambda$.  We omit the computations here, but refer the interested reader to \cite{GS}, where this equality is justified leading up to Proposition 2.2, which is the analogous result to our Lemma \ref{extvar} when the target space is not required to be conical.

We now note that the expression
\[
\bigg(\frac{\partial\varphi}{\partial x_i}\bigg)^2\bigg|\frac{\partial R_{0_X,1+t\varphi(x)}}{\partial\lambda}(u(x))\bigg|^2
\]
is uniformly bounded, as $\big|\frac{\partial R_{0_X,\lambda}}{\partial\lambda}(p)\big|=\big|\frac{\partial}{\partial\lambda}\gamma_p(\lambda)\big|=d(p,0_X)$, so our assumptions on $u,\varphi$ bound this term (uniformly for all $t$ small enough).  

Moreover, the term
\[
\frac{\partial\varphi}{\partial x_i}\frac{\partial d^2(R_{0_X,1+t\varphi(x)}(u(x),0_X)}{\partial x_i}
\]
is continuous in $t$, by inspection, because
\[
d^2(R_{0_X,1+t\varphi(x)}(u(x)),0_X)=(1+t\varphi(x))^2d^2(u(x),0_X)
\]
and we can simply take the derivative of the right-hand side in $x$.  This coefficient is bounded by the energy of $u$:
\[
\nabla(d^2(R_{0_X,1+t\varphi(x)}(u(x)),0_X))=\nabla(1+t\varphi(x))^2 d^2(u(x),0_X)+(1+t\varphi(x))^2\nabla d^2(u(x),0_X)
\]
so that
\[
|\nabla(d^2(R_{0_X,1+t\varphi(x)}(u(x)),0_X))|\leq C_1+C_2|\nabla u|^2
\]
where $C_1,C_2$ are constants depending only on $\varphi$ and the upper bound on $d(u(x),0_X)$.  We note that this right-hand side is integrable on $\Omega$, so the hypotheses of the dominated convergence theorem are satisfied.

We then find the energy of $u_t$, using the identity $d(R_{0_X,\lambda}(p),R_{0_X,\lambda}(q))=\lambda d(p,q)$ to compute the first term explicitly:
\begin{align*}
E(u_t)=\int_{\Omega}(1+t\varphi(x))^2|\nabla u|^2dx+t\int_\Omega \nabla\varphi\cdot \nabla d^2(R_{0_X,1+t\varphi}(u),0_X)dx+O(t^2).
\end{align*}

Our earlier computations let us take the derivative of $E(u_t)$ with respect to $t$ at $t=0$.  By the dominated convergence theorem, the integral \[\int_\Omega \nabla\varphi\cdot \nabla d^2(R_{1+t\varphi}(u),0_X)dx\] is continuous (as a function of $t$) at $t=0$, and thus the derivative of \[t\int_\Omega \nabla\varphi\cdot \nabla d^2(R_{1+t\varphi}(u),0_X)dx\] at $t=0$ is $\int_\Omega \nabla\varphi\cdot \nabla d^2(u,0_X)dx$.  (The derivative of the $O(t^2)$ term at $t=0$ is, of course, $0$.)

We thus see directly that $E(u_t)$ is differentiable in $t$ at $t=0$.  Since $E(u_t)$ has a local minimum there, this derivative must be $0$, so we conclude that

\[
0=\frac{d}{dt}\bigg|_{t=0}E(u_t)=2\int_{\Omega}\varphi(x)|\nabla u|^2dx+\int_\Omega \nabla\varphi\cdot \nabla d^2(u,0_X)dx,
\]
exactly as we wished to show.
\end{proof}

The fact that this identity is an equality in the conical setting is the main reason for our focus on this setting; in more general spaces of nonpositive curvature, it would be an inequality which would introduce error terms to our later computations.

\begin{proof}[Proof of Lemma \ref{intvar}]
Let $\varphi:\Omega\to\R^m$ be a smooth compactly supported function on $\Omega\subset\R^m$ and consider $\Phi_t(x)=x+t\varphi(x)$; notice that for sufficiently small $t$, $\Phi_t$ is a diffeomorphism of $\Omega$ onto itself.  We then define $u_t=u(\Phi_t)$ and see that
\[
E(u_t)=\int_\Omega|\nabla u_t(x)|^2=\int_\Omega|\nabla u(x+t\varphi(x))+t\nabla u(x+t\varphi(x))\circ \nabla\varphi(x)|^2 dx.
\]

Changing variables to $y=\Phi_t(x)$, we compute
\[
E(u_t)=\int_\Omega|\nabla u(y)+t\nabla u(y)\circ \nabla\varphi(\Phi^{-1}_t(y))|^2\det(\nabla\Phi^{-1}_t)dy.
\]
If we expand the determinant here as a polynomial in $t$ and expand the inner product, we see that all of the resulting terms are bounded in terms of the energy density (and constants depending only on $\varphi$).  Moreover, we can see that
\[
E(u_t)=E(u)+t\underbrace{\bigg(2\int_\Omega\la \nabla u(y),\nabla u(y)\circ \nabla\varphi(\Phi^{-1}_t(y))\ra dy-\int_\Omega|\nabla u|^2\text{div}(\varphi)dy\bigg)}_{=:F_1(t)}+o(t).
\]

Just as in the previous proof, if $\lim_{t\to0}F_1(t)$ exists, then $E(u_t)$ is differentiable at $t=0$, and its derivative at $t=0$ is equal to this limit.  But this limit exists by the dominated convergence theorem, since the integrands converge pointwise.

Hence,
\[
0=\frac{d}{dt}\bigg|_{t=0}E(u_t)=2\int_\Omega\la \nabla u,\nabla u\circ \nabla\varphi\ra dy-\int_\Omega|\nabla u|^2\text{div}(\varphi)dy.
\]
\end{proof}

\section{Useful Identities and Inequalities}\label{Identities3}
The previous section's variational formulae let us prove a number of useful identities; the bulk of our subsequent computations rely in some way on the following identities.  

We begin by collecting a number of identities on the functions $E_\phi,H_\phi,I_\phi$ and their derivatives.

In these identities, we will use the radial vector field $\nu_x=\frac{y-x}{|y-x|}$ and the auxiliary function $\xi_\phi(x,r)$, which is given by\[
\xi_\phi(x,r):=-\int_{\R^m}|\partial_{\nu_x}u(y)|^2|y-x|\phi'\bigg(\frac{|y-x|}{r}\bigg)dy.
\]

\begin{proposition}\label{calculations}
We have that the functions $E_\phi,$ $H_\phi$, and $I_\phi$ are $C^1$ in both variables, and the following identities hold:
\begin{align}
E_\phi(x,r)&=-\frac{1}{r}\int\phi'\bigg(\frac{|y-x|}{r}\bigg)\la u(y),\partial_{\nu_x} u(y)\ra dy\label{calc1}\\
\partial_rE_\phi(x,r)&=\frac{m-2}{r}E_\phi(x,r)+\frac{2}{r^2}\xi_\phi(x,r)\label{calc2}\\
\partial_vE_\phi(x,r)&=-\frac{2}{r}\int\phi'\bigg(\frac{|y-x|}{r}\bigg)\la\partial_{\nu_x}u(y),\partial_vu(y)\ra dy\label{calc3}\\
\partial_rH_\phi(x,r)&=\frac{m-1}{r}H_\phi(x,r)+2E_\phi(x,r)\label{calc4}\\
\partial_vH_\phi(x,r)&=-2\int\phi'\bigg(\frac{|y-x|}{r}\bigg)\la u(y),\partial_{v}u(y)\ra dy\label{calc5}\end{align}
Further, both $r^{1-m}H_\phi(x,r)$ and $I_\phi(x,r)$ are nondecreasing as functions of $r$ and in particular:
\begin{align}
\partial_rI_\phi(x,r)&=\frac{2}{rH_\phi(x,r)^2}\bigg(H_\phi(x,r)\xi_\phi(x,r)-r^2E_\phi(x,r)^2\bigg)\geq0\label{calc6}\\
s^{1-m}H_\phi(x,s)&=r^{1-m}H_\phi(x,r)\exp\bigg(-2\int_s^rI_\phi(x,t)\frac{dt}{t}\bigg)\label{calc7}.
\end{align}
\end{proposition}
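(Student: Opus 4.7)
The plan is to extract the five identities (\ref{calc1})--(\ref{calc5}) from the two variational formulae of the previous section by choosing appropriate test functions or test vector fields, and then to deduce the monotonicity statements (\ref{calc6}) and (\ref{calc7}) as consequences. Throughout, the $C^1$ regularity of $E_\phi$, $H_\phi$, and $I_\phi$ in both variables will follow from standard dominated-convergence arguments using the Lipschitz regularity of $\phi$ together with the local Lipschitz bound on $u$ from Theorem \ref{GS2.3}; this justifies differentiation under the integral sign in all the computations below, and, via Proposition \ref{prop:GS3.4}, keeps $H_\phi>0$ so that $I_\phi$ is well-defined.

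For (\ref{calc1}), test Lemma \ref{extvar} with $\psi(y)=\phi(|y-x|/r)$: since $\nabla_y\psi = r^{-1}\phi'(|y-x|/r)\nu_x$, and the ambient-Euclidean identity $\nabla d^2(u,0_X)\cdot v = 2\la u,\partial_{v}u\ra$ (valid because $d^2(u,0_X)=|u|^2$) applied with $v=\nu_x$ gives the derivative factor, direct substitution produces (\ref{calc1}) after dividing by $2$. For (\ref{calc2}), apply Lemma \ref{intvar} with the radial test field $\varphi(y)=\phi(|y-x|/r)(y-x)$; a short computation yields $\mathrm{div}\,\varphi=m\phi+(|y-x|/r)\phi'$ and $\sum_{i,j}\la\partial_iu,\partial_ju\ra\partial_i\varphi_j = |\nabla u|^2\phi + (|y-x|/r)\phi'|\partial_{\nu_x}u|^2$, and substituting—together with the identity $\partial_rE_\phi(x,r)=-r^{-2}\int|y-x|\phi'(|y-x|/r)|\nabla u|^2\,dy$ obtained by differentiating the definition of $E_\phi$—gives the stated formula. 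For (\ref{calc3}) and (\ref{calc5}), first differentiate $E_\phi$ and $H_\phi$ directly in $x$, using $\partial_{x,v}|y-x|=-v\cdot\nu_x$; then for (\ref{calc3}) apply Lemma \ref{intvar} with the constant-direction test field $\varphi(y)=\phi(|y-x|/r)v$ to convert $r^{-1}\int\phi'(|y-x|/r)|\nabla u|^2(v\cdot\nu_x)\,dy$ into the claimed radial-tangent inner product. For (\ref{calc4}), pass to polar coordinates centered at $x$: writing $h(\rho)=\int_{S^{m-1}}d^2(u(x+\rho\omega),0_X)\,d\omega$ and rescaling $\tau=\rho/r$ puts $H_\phi(x,r) = -r^{m-1}\int_0^\infty\phi'(\tau)\tau^{m-2}h(r\tau)\,d\tau$, and differentiation in $r$ produces the scaling term $(m-1)H_\phi/r$ plus a term involving $h'$, which upon return to Cartesian coordinates is a constant multiple of $\int\phi'(|y-x|/r)\la u,\partial_{\nu_x}u\ra\,dy$; identity (\ref{calc1}) then rewrites this as $2E_\phi(x,r)$.

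For (\ref{calc6}), apply the quotient rule to $I_\phi=rE_\phi/H_\phi$ and substitute (\ref{calc2}) and (\ref{calc4}); the $(m-1)E_\phi H_\phi$ contributions cancel, leaving the claimed $\frac{2}{rH_\phi^2}\bigl(H_\phi\xi_\phi - r^2E_\phi^2\bigr)$. Non-negativity then follows from the Cauchy--Schwarz inequality with respect to the non-negative Radon measure $-\phi'(|y-x|/r)\,dy$ applied to the pair $|u|/\sqrt{|y-x|}$ and $\sqrt{|y-x|}\,|\partial_{\nu_x}u|$; combined with (\ref{calc1}), this yields $(rE_\phi)^2\leq H_\phi\xi_\phi$ and hence $\partial_rI_\phi\geq0$. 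Finally, for (\ref{calc7}), divide (\ref{calc4}) by $H_\phi$ to obtain $\partial_r\log(r^{1-m}H_\phi) = 2I_\phi/r$; this both proves that $r^{1-m}H_\phi$ is nondecreasing and, after integrating from $s$ to $r$ and exponentiating, produces the exponential formula.

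The principal technical obstacle I anticipate lies in the presence of the singular target: one must check, using the $L^2$ bound on $\nabla u$ and the local Lipschitz regularity from Theorem \ref{GS2.3}, that the ambient-Euclidean computations with $d^2(u,0_X)=|u|^2$ and its gradient are valid almost everywhere and admit integrable dominating functions permitting both integration by parts and differentiation under the integral sign. Once this foundation is in place, the remainder is careful bookkeeping of signs, radial/angular decompositions, and scalings in $r$.
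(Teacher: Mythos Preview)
Your proposal is correct and follows essentially the same route as the paper: the same test functions $\phi(|y-x|/r)$, $\phi(|y-x|/r)(y-x)$, and $\phi(|y-x|/r)v$ in Lemmas \ref{extvar} and \ref{intvar} for (\ref{calc1})--(\ref{calc3}), the same polar-coordinate rescaling and appeal to (\ref{calc1}) for (\ref{calc4}), direct differentiation (after translating $y\mapsto x+z$) for (\ref{calc5}), and the same quotient-rule/Cauchy--Schwarz/log-derivative arguments for (\ref{calc6}) and (\ref{calc7}). The only cosmetic difference is that the paper first assumes $\phi$ smooth and then approximates, whereas you invoke dominated convergence and the local Lipschitz bound on $u$ directly; both justifications are adequate.
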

\begin{proof}
We will prove these formulae under the assumption that $\phi$ is smooth, as we can then recover the result for Lipschitz $\phi$ by taking a sequence of $\phi_k$ which are bounded in $C^1$ and converge to the desired $\phi$ in $W^{1,p}$ for $p<\infty$.  Then $E_\phi,H_\phi$, and all of their derivatives converge uniformly to the limits, and since $H_\phi$ is strictly positive for $r>0$, we conclude that $I_\phi$ is $C^1$ as well.

First, (\ref{calc1}) is a consequence of Lemma \ref{extvar}, applied to $\phi\big(\frac{|y-x|}{r}\big)$.  We use the fact that $\nabla d^2(x,0_X)=2x$ so that 

\begin{equation}\partial_{\nu_x}d^2(u(y),0_X)=2\la u(y),\partial_{\nu_x}u(y)\ra.\label{radDeriv}\end{equation}

Then, because $\nabla\big(\phi\big(\frac{|y-x|}{r}\big)\big)=\phi'\big(\frac{|y-x|}{r}\big)\partial_{\nu_x}$, we see that
\begin{align*}
\nabla\Big(\phi\Big(\frac{|y-x|}{r}\Big)\Big)\cdot\nabla(d^2(u(y),0_X))&=\phi'\Big(\frac{|y-x|}{r}\Big)\partial_{\nu_x}\cdot\nabla d^2(u(y),0_X)\\
&=\phi'\Big(\frac{|y-x|}{r}\Big)\partial_{\nu_x}d^2(u(y),0_X)
\end{align*}
from which we apply (\ref{radDeriv}) to derive (\ref{calc1}).

Differentiating the energy in $r$ and $x$, we see that
\begin{align*}
\partial_rE_\phi(x,r)&=-\int|\nabla u(y)|^2\phi'\bigg(\frac{|y-x|}{r}\bigg)\frac{|y-x|}{r^2}dy\\
\partial_vE_\phi(x,r)&=-\int|\nabla u(y)|^2\phi'\bigg(\frac{|y-x|}{r}\bigg)\frac{y-x}{r|y-x|}\cdot vdy.
\end{align*}

Applying Lemma \ref{intvar} with $\varphi_1(y)=\phi\big(\frac{|y-x|}{r}\big)(y-x)$ and $\varphi_2(y)=\phi\big(\frac{|y-x|}{r}\big)v$, respectively, we conclude (\ref{calc2}) and (\ref{calc3}).

We then change variables in the height function in two ways:
\begin{align}
H_\phi(x,r)&=-\int d^2(u(x+z),0_X)|z|^{-1}\phi'\bigg(\frac{|z|}{r}\bigg)dz\label{zint}\\&=-\frac{1}{r^{m-1}}\int d^2(u(x+r\zeta),0_X)|\zeta|^{-1}\phi'(|\zeta|)d\zeta.\label{rint}
\end{align}

Using (\ref{rint}) and differentiating in $r$ we see that
\[
\partial_rH_\phi(x,r)=\frac{m-1}{r}H_\phi(x,r)-\frac{2}{r^{m-1}}\int \la u(x+r\zeta),\partial_{r}(u(x+r\zeta))\ra|\zeta|^{-1}\phi'(|\zeta|)d\zeta
\]
where we have again used (\ref{radDeriv}).  Changing variables back to $y$ and applying (\ref{calc1}), we conclude that (\ref{calc4}) holds.

Using (\ref{zint}), we deduce (\ref{calc5}) by differentiating under the integral sign.

We compute $\partial_rI_\phi(x,r)$ by the quotient rule.  This derivative is nonnegative by Cauchy-Schwarz:
\begin{align*}
r^2E_\phi(x,r)^2&=\Bigg(\int-\phi'\bigg(\frac{|y-x|}{r}\bigg)\la u(y),\partial_{\nu_x}u(y)\ra dy\bigg)^2\\
&\leq\int-\phi'\bigg(\frac{|y-x|}{r}\bigg)|y-x|^{-1}|u(y)|^2dy\int-\phi'\bigg(\frac{|y-x|}{r}\bigg)|y-x||\partial_{\nu_x}u(y)|^2dy\\
&=H_\phi(x,r)\xi_\phi(x,r).
\end{align*}

We remark that we only use the assumption that $\phi'\leq0$ in this application of the Cauchy-Schwarz inequality.

Lastly, we rewrite (\ref{calc4}) as
\[
\partial_r\log(r^{1-m}H_\phi(x,r))=\frac{\partial_rH_\phi(x,r)}{H_\phi(x,r)}-\frac{m-1}{r}=2\frac{E_\phi(x,r)}{H_\phi(x,r)}=\frac{2}{r}I_\phi(x,r)
\]
and conclude (\ref{calc7}) immediately.
\end{proof}

If we let $\phi$ increase to the indicator function of $[0,1]$, we obtain similar statements for the classical variants of these functions; in particular we can derive the scale-invariant monotonicity of the height from (\ref{calc7}).  That is, for $0<s\leq r<\text{dist}(x,\partial\Omega)$, if $H$ denotes the classical height, we have
\begin{equation}
s^{1-m}H(x,s)\leq r^{1-m}H(x,r).\label{scinvmono}
\end{equation}

(Following the computations of \cite{GS} for minimizing maps with Euclidean domains will recover this monotonicity as well; cf. their proof of Theorem 2.3.)

The next lemma shows that the height (resp. frequency), measured on some ball (about a point $x$, with radius $r$), gives an upper bound on the same quantity at nearby points and smaller scales.

\begin{lemma}\label{lemma3.4}
There is a constant $C(m,\phi)$ such that
\begin{align}
H_\phi(y,\rho)&\leq CH_\phi(x,4\rho) &\forall y\in B_\rho(x)\subset B_{4\rho}(x)\subset\Omega\label{3.2hbound}\\
I_\phi(y,r)&\leq C(I_\phi(x,16r)+1) &\forall y\in B_{r/4}(x)\subset B_{16r}(x)\subset\Omega.\label{3.2fbound}
\end{align}
\end{lemma}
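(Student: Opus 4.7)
The two inequalities are proved separately and by different techniques: the height bound reduces quickly to the classical (unsmoothed) height monotonicity, while the frequency bound requires splitting the numerator and denominator of $I_\phi(y,r)$, with the main difficulty appearing in the lower bound for $H_\phi(y,r)$.

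The plan for (\ref{3.2hbound}) is to unwrap $H_\phi$ using Assumption \ref{bound}, which forces $\phi'=-2$ on $(1/2,1)$ and $0$ elsewhere. Writing $H_\phi(y,\rho)=2\int_{\rho/2}^{\rho} s^{-1}H(y,s)\,ds$, where $H(y,s)=\int_{\partial B_s(y)}d^2(u,0_X)\,d\Sigma$ is the classical height, I first estimate
\[
H_\phi(y,\rho)\leq\frac{4}{\rho}\int_{B_\rho(y)}d^2(u,0_X)\,dz\leq\frac{4}{\rho}\int_{B_{2\rho}(x)}d^2(u,0_X)\,dz=\frac{4}{\rho}\int_0^{2\rho}H(x,s)\,ds,
\]
using the inclusion $B_\rho(y)\subset B_{2\rho}(x)$. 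The classical scale-invariant monotonicity (\ref{scinvmono}), together with $m\geq 2$, shows that $s\mapsto H(x,s)$ is nondecreasing, so the final integral is at most $2\rho\cdot H(x,2\rho)$. The same monotonicity gives $H(x,s)\geq H(x,2\rho)$ for $s\in[2\rho,4\rho]$, whence $H_\phi(x,4\rho)=2\int_{2\rho}^{4\rho}s^{-1}H(x,s)\,ds\geq 2\ln 2\cdot H(x,2\rho)$. Combining, $H_\phi(y,\rho)\leq(4/\ln 2)H_\phi(x,4\rho)$.

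For (\ref{3.2fbound}), the numerator is easy. Since $|y-x|<r/4$, we have $B_r(y)\subset B_{2r}(x)$, and Assumption \ref{bound} gives $\phi(|z-x|/(4r))=1$ on $B_{2r}(x)$, so $E_\phi(y,r)\leq E_\phi(x,4r)$. Multiplying by $r$, rewriting, and applying the monotonicity (\ref{calc6}) of $I_\phi$ and the nondecreasingness of $H_\phi$ (immediate from (\ref{calc4})) in the scale variable yields
\[
rE_\phi(y,r)\leq\tfrac{1}{4}I_\phi(x,4r)H_\phi(x,4r)\leq\tfrac{1}{4}I_\phi(x,16r)H_\phi(x,16r).
\]

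The main obstacle is producing a suitable lower bound on $H_\phi(y,r)$. The natural route is to apply (\ref{3.2hbound}) with the roles of $x$ and $y$ exchanged (valid because $x\in B_{r/4}(y)\subset B_{4r}(y)$) to obtain $H_\phi(y,16r)\geq c\,H_\phi(x,4r)$, and then to descend from scale $16r$ to scale $r$ at $y$ via the identity (\ref{calc7}), which introduces the factor $\exp\!\big(-2\int_r^{16r}I_\phi(y,t)\,dt/t\big)$. The subtlety is that naively bounding the integrated frequency by $I_\phi(y,16r)\cdot\ln 16$ yields only an exponential-in-$I_\phi$ estimate, which is weaker than the claimed linear bound. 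To recover linear dependence on $I_\phi(x,16r)$, the exponential factor must be avoided, most naturally by establishing a Caccioppoli-type estimate from Lemma \ref{extvar} with a smooth cutoff supported in $B_{2r}(y)\subset B_{4r}(x)$: this produces $\int_{B_r(y)}|\nabla u|^2\,dz\leq (C/r^2)\int_{B_{2r}(y)}d^2(u,0_X)\,dz$, whose right-hand side is controlled directly by $H_\phi(x,16r)$ via (\ref{3.2hbound}) and monotonicity of classical height, without invoking (\ref{calc7}). The final step then combines this improved numerator estimate with the lower bound $H_\phi(y,r)\geq H_\phi(y,r)$ obtained from (\ref{calc4}) together with $H_\phi(y,16r)\geq c H_\phi(x,4r)$ to yield $I_\phi(y,r)\leq C(I_\phi(x,16r)+1)$, the additive $+1$ absorbing the scale-dependent constants.
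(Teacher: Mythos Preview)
Your argument for (\ref{3.2hbound}) is correct and essentially identical to the paper's: both unwind $H_\phi$ into the classical height, use the inclusion $B_\rho(y)\subset B_{2\rho}(x)$, and then invoke (\ref{scinvmono}) to compare solid integrals to $H_\phi(x,4\rho)$.

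Your argument for (\ref{3.2fbound}), however, has a genuine gap. The Caccioppoli step gives the numerator bound $rE_\phi(y,r)\le C H_\phi(x,16r)$, but you never produce a usable lower bound on the denominator $H_\phi(y,r)$. The sentence ``the lower bound $H_\phi(y,r)\ge H_\phi(y,r)$'' is a tautology, and the only nontrivial inequality you state, $H_\phi(y,16r)\ge c\,H_\phi(x,4r)$, concerns the wrong scale: passing from $H_\phi(y,16r)$ down to $H_\phi(y,r)$ forces you back through (\ref{calc7}) and reintroduces exactly the exponential factor you were trying to avoid. Any route that separates numerator and denominator ends up comparing $H_\phi(y,r)$ with $H_\phi(x,16r)$ and so runs into this obstruction; the Caccioppoli detour does not help, because the residual ratio $H_\phi(x,16r)/H_\phi(y,r)$ is itself only controlled exponentially in the frequency.

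The paper sidesteps this entirely by \emph{not} splitting $I_\phi=rE_\phi/H_\phi$. Instead it closes a loop of height comparisons,
\[
H_\phi(y,4)\ \le\ CH_\phi(x,16)\ \le\ Ce^{CI_\phi(x,16)}H_\phi(x,1/4)\ \le\ Ce^{CI_\phi(x,16)}H_\phi(y,1),
\]
using (\ref{3.2hbound}) twice (once in each direction) and (\ref{calc7}) once at the center $x$. It then applies (\ref{calc7}) once more at $y$ to rewrite $H_\phi(y,1)$ in terms of $H_\phi(y,4)$, obtaining
\[
H_\phi(y,4)\ \le\ C\,H_\phi(y,4)\exp\Big(CI_\phi(x,16)-2\int_1^4 I_\phi(y,t)\,\tfrac{dt}{t}\Big).
\]
Dividing through and taking logarithms makes both frequencies appear \emph{linearly}, and monotonicity of $I_\phi(y,\cdot)$ finishes the bound. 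The point is that the two uses of (\ref{calc7}) contribute exponentials of opposite sign, so after the logarithm they combine additively rather than multiplicatively. That is the missing idea in your proposal.
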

\begin{proof}
Without loss of generality take $x=0$ and assume $\rho=1$.  We shall prove (\ref{3.2hbound}).  The scale-invariant monotonicity of height (\ref{scinvmono}) easily implies that for $r\in(2,4)$,
\[
\int_{B_2}d^2(u(x),0_X)\leq C\int_{\partial B_r}d^2(u(x),0_X).
\]

Integrating both sides against $-r^{-1}\phi'(r/4)dr$, we then see that
\[
\int_{B_2}d^2(u(x),0_X)\leq CH_\phi(0,4)
\]
and since $B_1(y)\subset B_2(0)$, we clearly have that $H_\phi(y,1)\leq C\int_{B_2}d^2(u(x),0_X)$.  This proves (\ref{3.2hbound}).

Next, to show (\ref{3.2fbound}) we take $r=1$ and $x=0$, and then use (\ref{3.2hbound}), (\ref{calc7}), and the monotonicity of the frequency to conclude that
\begin{align*}
H_\phi(y,4)&\leq CH_\phi(0,16)\leq Ce^{CI_\phi(0,16)}H_\phi(0,1/4)\leq Ce^{CI_\phi(0,16)}H_\phi(y,1)\\
&=CH_\phi(y,4)\exp\bigg(CI_\phi(0,16)-2\int_1^4I_\phi(y,t)\frac{dt}{t}\bigg).
\end{align*}
Then, dividing by $H_\phi(y,4)$ and taking a logarithm, we conclude that
\[
2I_\phi(y,1)\int_1^4\frac{dt}{t}\leq C(1+I_\phi(0,16)).
\]
\end{proof}

\section{The Frequency Pinching}\label{Pinching4}

We wish to compare our minimizing maps to homogeneous minimizing maps.  The key tool is the frequency pinching, as discussed in Definition \ref{def:freqpinch} of Subsection \ref{subsec:sketch},
\[
W_s^r(x):=I_\phi(x,r)-I_\phi(x,s).
\]

This quantity is notable because it measures how far a map is from being homogeneous about $x$.  In particular, a map is homogeneous about $x$ if and only if the pinching at $x$ is zero.

Here, we wish to consider functions homogeneous with respect to the cone point $0_X$, rather than the ``intrinsic homogeneity" of \cite{GS}.

\begin{definition}\label{def:ConHom}
We say that a map $u:B_r(x_0)\to X$ is {\bf conically homogeneous about $x_0$ of order $\alpha$} if for any $x\in B_r(x_0)\setminus\{x_0\}$, we have that
\[
u(x)=|x-x_0|^\alpha u\Big(\frac{r(x-x_0)}{|x-x_0|}+x_0\Big)
\]
where the multiplication on the right-hand-side of the above identity denotes the scaling about the cone point.

For convenience, we usually refer to conically homogeneous maps as homogeneous; we will not be working with any other kind of homogeneity in this paper.

We will also say that $u$ is {\bf homogeneous about $x_0$ on the annulus} \[A_{s_1}^{s_2}(x_0):=\{x\in B_r(x_0):s_1\leq|x-x_0|\leq s_2\}\]
if, for any $x\in A_{s_1}^{s_2}(x_0)$, we have that
\[
u(x)=\frac{|x-x_0|^\alpha}{s_2^\alpha}u\Big(\frac{s_2(x-x_0)}{|x-x_0|}+x_0\Big).
\]

A map which is homogeneous on the ball $B_r(x_0)$ is homogeneous on any annulus $A_{s_1}^{s_2}(x_0)$ with $s_1\leq s_2\leq r$.  Additionally, if $u$ is homogeneous of degree $\alpha$ on the two annuli $A_{s_1}^{s_2}(x_0)$ and $A_{s_2}^{s_3}(x_0)$ (or more generally two annuli which overlap), it is homogeneous of degree $\alpha$ on their union $A_{s_1}^{s_3}(x_0)$.
\end{definition}

We recall (\ref{calc1}), which states that
\[
E_\phi(x,r)=-\frac{1}{r}\int\phi'\Big(\frac{|y-x|}{r}\Big)\partial_ru(y)\cdot u(y)dy.
\]

Applying this to a conically homogeneous map, we see that $rE_\phi(x_0,r)=\alpha H_\phi(x_0,r)$ by explicitly computing the radial derivative.  Thus, for any $r>0$, we have that $I_\phi(x_0,r)=\alpha$.

The following is a converse to the above remark.
\begin{lemma}\label{homchar}
If $u$ is a harmonic map $u:\Omega\to X$, where $\Omega\subset\R^m$ and $X$ is a conical $F$-connected complex, and $I_\phi(x,s_1)=I_\phi(x,s_2)$ for $s_1<s_2$, then $u$ is conically homogeneous about $x$ on $B_{s_2}(x)$.
\end{lemma}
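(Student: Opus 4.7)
The plan is to turn equality of $I_\phi$ at two scales into a radial ODE on $u$, integrate it to produce conical homogeneity on an annulus, and then extend to $B_{s_2}(x)$ by uniqueness of harmonic maps into NPC targets. By the monotonicity of $I_\phi$ from (\ref{calc6}) and the equality hypothesis, $I_\phi(x, r) \equiv \alpha$ on $[s_1, s_2]$ and hence $\partial_r I_\phi \equiv 0$ there, so the Cauchy--Schwarz estimate used to prove (\ref{calc6}) is an equality for every $r \in [s_1, s_2]$. Equality in this Cauchy--Schwarz forces, a.e.\ on the support of $-\phi'(|\cdot - x|/r)$ -- which by Assumption \ref{bound} is the annulus $A_{r/2}^r(x) := \{y : r/2 < |y-x| < r\}$ -- pointwise collinearity of $u(y)$ and $\partial_{\nu_x} u(y)$ in $\R^N$ together with a proportionality $|\partial_{\nu_x} u(y)||y - x| = c(r)|u(y)|$. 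Combined, these give $\partial_{\nu_x} u(y) = c(r) u(y)/|y - x|$, and substituting back into (\ref{calc1}) identifies $c(r) = I_\phi(x, r) = \alpha$.

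As $r$ varies over $[s_1, s_2]$ the annuli $A_{r/2}^r(x)$ cover $A_{s_1/2}^{s_2}(x)$, so the radial relation $\partial_{\nu_x} u = \alpha u/|y - x|$ holds on the full annulus. Integrating the linear ODE $\partial_t v = \alpha v/t$ along each ray $t \mapsto x + t\omega$, and using that geodesic rays from $0_X$ embed as rays through $0$ in $\R^N$ (so intrinsic scaling about $0_X$ is literal scalar multiplication, per Section \ref{VarFormulae3}), yields conical homogeneity of degree $\alpha$ on $A_{s_1/2}^{s_2}(x)$.

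To promote this to homogeneity on all of $B_{s_2}(x)$, I consider for $\lambda \in (0, 1]$ the rescaled map $u_\lambda(y) := \lambda^{-\alpha} u(x + \lambda(y - x))$; by scale invariance of the minimizing property (combined with the cone structure of $X$), $u_\lambda$ is a harmonic map on $B_{s_2/\lambda}(x) \supseteq B_{s_2}(x)$. A direct computation using the annular homogeneity shows $u_\lambda = u$ on $A_{s_1/(2\lambda)}^{s_2}(x)$, so $u_\lambda|_{B_{s_1/(2\lambda)}(x)}$ and $u|_{B_{s_1/(2\lambda)}(x)}$ are energy minimizers with the same boundary data on $\partial B_{s_1/(2\lambda)}(x)$. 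By the uniqueness of energy minimizers into NPC targets with prescribed boundary values (from \cite{GS}, recalled after Definition \ref{def:harmonic}), $u_\lambda \equiv u$ on all of $B_{s_2}(x)$; unpacking this at $y = x + s_2\omega$ gives $u(x + \lambda s_2\omega) = \lambda^\alpha u(x + s_2\omega)$ for every $\lambda \in (0, 1]$ and $\omega \in S^{m-1}$, which is precisely conical homogeneity of degree $\alpha$ on $B_{s_2}(x)$.

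The main obstacle is this final extension step: the Cauchy--Schwarz reduction and the ODE integration are routine, but verifying that $u_\lambda$ is genuinely a harmonic map (requiring compatibility between rescaling the domain by $\lambda$ and scaling about the cone point of $X$ by $\lambda^{-\alpha}$, so that the Dirichlet energy transforms by a uniform scalar and minimality is preserved) and confirming that the uniqueness theorem from \cite{GS} applies cleanly to the restrictions to $B_{s_1/(2\lambda)}(x)$ both merit care.
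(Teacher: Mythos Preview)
Your proof is correct and follows essentially the same route as the paper: equality in the Cauchy--Schwarz step behind (\ref{calc6}) gives the radial ODE $\partial_{\nu_x}u=\alpha u/|y-x|$ on $A_{s_1/2}^{s_2}(x)$, integration yields annular homogeneity, and then a rescaling combined with uniqueness of minimizers extends this to the full ball. One small point: your boundary-matching argument for $u_\lambda$ only works when the annulus $A_{s_1/(2\lambda)}^{s_2}(x)$ is nonempty, i.e.\ for $\lambda\geq s_1/(2s_2)$, so to reach every $\lambda\in(0,1]$ you must iterate (the paper does exactly this, using the single rescaling by $s_1/s_2$ and an explicit bootstrap).
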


In particular, a map is homogeneous about a point $x_0$ if and only if the order at that point is constant between two radii $s<r$.  Although we prove this lemma in this section, we mostly do so for motivation, as we do not require the result until Section \ref{Qualitative6}.  However, this lemma lets us consider the rigid case of the following claims, which show how the pinching at $x_0$ quantitatively measures how close a map is to being homogeneous about $x_0$.

\begin{theorem}\label{thm4.2}
There exists a $C=C(\Lambda,m,X)>0$ so that if $u,\phi$ satisfy Assumptions \ref{cone} and \ref{bound}, $x_1,x_2\in B_{1/8}(0)$, and $|x_1-x_2|\leq r/4$, then
\begin{equation}
|I_\phi(z,r)-I_\phi(y,r)|\leq C\bigg[W_{r/8}^{4r}(x_1)+W_{r/8}^{4r}(x_2)\bigg]^{\frac{1}{2}}\frac{|z-y|}{|x_1-x_2|}
\end{equation}
for $z,y$ on the line segment between $x_1$ and $x_2$.
\end{theorem}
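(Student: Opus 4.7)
The strategy is to derive an explicit integral formula for the spatial derivative $\partial_v I_\phi(w,r)$ at points $w$ on the segment from $x_1$ to $x_2$, bound $|\partial_v I_\phi(w,r)|^2$ pointwise by a constant multiple of $\partial_r I_\phi(w,r)/r$ via Cauchy--Schwarz, and then integrate along the segment from $y$ to $z$, controlling the resulting line integral of $\partial_r I_\phi$ in terms of the pinchings at the endpoints $x_1$ and $x_2$.

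First, applying the quotient rule to $I_\phi = rE_\phi/H_\phi$, using the spatial derivative identities (\ref{calc3}) and (\ref{calc5}), and substituting $rE_\phi = I_\phi H_\phi$, one arrives at
$$\partial_v I_\phi(w,r) = \frac{-2}{H_\phi(w,r)} \int -\phi'\!\left(\frac{|y-w|}{r}\right)\! \la \partial_{\nu_w} u(y) - \frac{I_\phi(w,r)\,u(y)}{|y-w|},\, \partial_v u(y)\ra\, dy.$$
The vector $\partial_{\nu_w}u - I_\phi(w,r)u/|y-w|$ is the pointwise obstruction to $u$ being conically homogeneous of degree $I_\phi(w,r)$ about $w$; expanding its squared norm and using (\ref{calc1}) with (\ref{calc6}) identifies its weighted $L^2$ norm as exactly $rH_\phi(w,r)\partial_r I_\phi(w,r)/2$. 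Applying Cauchy--Schwarz to the integral above and bounding the companion factor $\int -\phi'|\partial_v u|^2/|y-w|\,dy \leq C_\Lambda H_\phi(w,r)/r^2$ (via Lemma \ref{lemma3.4}, the height monotonicity identity (\ref{calc7}), and the bound $I_\phi \leq \Lambda$) yields the pointwise estimate
$$|\partial_v I_\phi(w,r)|^2 \leq \frac{C_\Lambda\, \partial_r I_\phi(w,r)}{r}.$$

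The fundamental theorem of calculus and one further Cauchy--Schwarz give
$$|I_\phi(z,r)-I_\phi(y,r)|^2 \leq \frac{C_\Lambda |z-y|}{r}\int_y^z \partial_r I_\phi(w,r)\, dw,$$
so it remains to bound the line integral on the right by $C_\Lambda\, r\bigl[W_{r/8}^{4r}(x_1)+W_{r/8}^{4r}(x_2)\bigr]|z-y|/|x_1-x_2|^2$; this is the principal obstacle. I would re-expand $\partial_r I_\phi(w,r)$ via (\ref{calc6}) as a weighted integral of the squared homogeneity defect at $w$ and Fubini-swap the $w$- and $y$-integrations. The hypothesis $|x_1-x_2|\leq r/4$ forces the annulus of integration around each $w$ on the segment to lie inside $A_{r/4}^{5r/4}(x_1)\subseteq A_{r/8}^{4r}(x_1)$, permitting the base point $w$ to be replaced by $x_1$ in the defect integrand (and symmetrically by $x_2$) modulo controllable error terms of size $|w-x_1|/r$. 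The principal contribution then becomes the pinching-defect integral at $x_1$, and combining the $x_1$- and $x_2$-contributions using the segmentation identity $|w-x_1|+|w-x_2|=|x_1-x_2|$ produces the essential $|z-y|/|x_1-x_2|^2$ factor, while the denominator $H_\phi(w,r)$ is bounded below by a multiple of $H_\phi(x_1,r)$ via Lemma \ref{lemma3.4}. The delicate point is managing the base-point-shift error terms---both in the center $w$ of the annulus and in the value of the homogeneity degree $I_\phi(w,r)$---without losing the sharp $\sqrt{W}$-dependence; this requires careful use of the weighted-$L^2$ orthogonality $\int -\phi'\la \partial_{\nu_w}u - I_\phi u/|y-w|, u\ra\, dy = 0$, which is a direct consequence of the definition of $I_\phi$.
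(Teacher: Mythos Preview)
Your steps up through the pointwise bound $|\partial_v I_\phi(w,r)|^2\le C_\Lambda\,\partial_r I_\phi(w,r)/r$ are correct and elegant. The genuine gap is step 4: you cannot bound the line integral $\int_y^z\partial_r I_\phi(w,r)\,dw$ by $C r[W(x_1)+W(x_2)]\,|z-y|/|x_1-x_2|^2$ along the lines you sketch, because the argument becomes circular. When you write the $w$-defect $\nabla u\circ(y'-w)-I_\phi(w,r)u$ as a convex combination of the $x_\ell$-centered quantities $\nabla u\circ(y'-x_\ell)-I_\phi(w,r)u$, the latter still carries the frequency $I_\phi(w,r)$ rather than $I_\phi(x_\ell,|y'-x_\ell|)$; correcting this introduces an error $|I_\phi(x_\ell,r)-I_\phi(w,r)|\cdot|u|$ (plus a harmless $O(W(x_\ell))$ piece). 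The orthogonality $\int(-\phi')|y'-w|^{-1}\langle\nabla u\circ(y'-w)-I_\phi(w,r)u,\,u\rangle\,dy'=0$ does let you absorb a \emph{constant} multiple of $u$ for free, but after squaring you are still left with a contribution proportional to $|I_\phi(x_\ell,r)-I_\phi(w,r)|^2 H_\phi$, and that quantity is exactly what the theorem is supposed to bound. No bootstrap closes here: feeding in the trivial a~priori bound $|I_\phi(x_\ell,r)-I_\phi(w,r)|\le C_\Lambda$ yields nothing, and feeding in the conclusion yields only $W^{1/4}$, not $W^{1/2}$.

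The paper's proof avoids this circularity by never passing through $\partial_r I_\phi(w,r)$ at intermediate points. Instead of bounding $|\partial_v u|$ by $|\nabla u|$ in the Cauchy--Schwarz step, it exploits the identity $v=x_2-x_1=(y'-x_1)-(y'-x_2)$ to write
\[
\partial_v u(y')=\partial_{\eta_{x_1}}u(y')-\partial_{\eta_{x_2}}u(y')=\mathcal{E}_1(y')-\mathcal{E}_2(y')+\mathcal{E}_3(y')\,u(y'),
\]
where $\mathcal{E}_\ell=\partial_{\eta_{x_\ell}}u-I_\phi(x_\ell,|y'-x_\ell|)u$ are the defects \emph{at $x_1,x_2$} (not at $w$) and $\mathcal{E}_3$ is a frequency difference. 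Substituting this into the formula for $\partial_v I_\phi(w,1)$ produces terms whose $L^2$ norms are bounded directly by $W(x_1)+W(x_2)$ via Proposition~\ref{prop4.3}, plus a term where the troublesome constant $I_\phi(x_1,1)-I_\phi(x_2,1)$ multiplies an expression that vanishes identically by (\ref{calc1}). This gives $|\partial_v I_\phi(w,1)|\le C\sqrt{W(x_1)+W(x_2)}$ uniformly along the segment, and integrating (with $|v|=|x_1-x_2|$) yields the theorem. The decomposition of $\partial_v u$ via the two base points is the idea your proposal is missing.
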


In the rigid case, when the pinchings on the right are both $0$, $u$ is homogeneous about both $x_1$ and $x_2$, and is (as we shall show in Lemma \ref{twohomogeneous}) invariant along the line between $x_1$ and $x_2$.  Hence, the frequency $I_\phi(x,r)$ is also invariant along the line between $x_1$ and $x_2$.  In particular, the left-hand side of this inequality is $0$ when the right-hand side is.  

We can view this theorem as an approximate version of the above observation, stating that if a map is almost homogeneous about two distinct points, then its frequency function (at a fixed radius) is almost invariant along the line between them.

\begin{proposition}\label{prop4.3}
There exists a $C=C(\Lambda,m,X)$ so that if $u,\phi$ satisfy Assumptions \ref{cone} and \ref{bound}, for every $x\in B_{1/8}(0)$,
\begin{equation}\label{eq4.3}
\int_{B_2(x)\setminus B_{1/4}(x)}|\nabla u(z)\circ(z-x)-I_\phi(x,|z-x|)u(z)|^2\leq CW_{1/8}^4(x).
\end{equation}
\end{proposition}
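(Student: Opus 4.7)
The plan is to reinterpret the Cauchy--Schwarz inequality used in the proof of $(\ref{calc6})$ as an $L^2$ best-approximation identity, integrate it in $r$, and then swap $I_\phi(x,r)$ for $I_\phi(x,|z-x|)$ at a cost controlled by the pinching itself. The starting point is the pointwise (in $r$) identity
\[
\int\bigl(-\phi'(|y-x|/r)\bigr)\,\frac{|\nabla u(y)\circ(y-x)-I_\phi(x,r)u(y)|^2}{|y-x|}\,dy=\frac{rH_\phi(x,r)\,\partial_r I_\phi(x,r)}{2}.
\]
This is obtained by setting $f(y)=u(y)/\sqrt{|y-x|}$, $g(y)=\sqrt{|y-x|}\,\partial_{\nu_x}u(y)$, and $d\mu_r=-\phi'(|y-x|/r)\,dy$, observing that the scalar minimizing $\int|g-\lambda f|^2\,d\mu_r$ is exactly $\lambda=rE_\phi/H_\phi=I_\phi(x,r)$, and that the corresponding Cauchy--Schwarz deficit $(\xi_\phi H_\phi-r^2E_\phi^2)/H_\phi$ equals $\tfrac{r}{2}H_\phi\,\partial_r I_\phi$ by $(\ref{calc6})$. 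With the specific $\phi$ of Assumption \ref{bound}, $-\phi'(|y-x|/r)=2\cdot\mathbf{1}_{(r/2,r)}(|y-x|)$, so the identity localizes the integrand to $y\in A_{r/2}^r(x)$.

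Next, I would integrate in $r\in(1/4,4)$ and apply Fubini (noting that for $y\in A_{1/4}^2(x)$, the relevant range $r\in(|y-x|,2|y-x|)$ lies entirely in $(1/4,4)$) to obtain
\[
\int_{A_{1/4}^2(x)}\frac{1}{|y-x|}\int_{|y-x|}^{2|y-x|}|\nabla u(y)\circ(y-x)-I_\phi(x,r)u(y)|^2\,dr\,dy\le CW_{1/8}^4(x),
\]
where the right-hand side uses the uniform bound $H_\phi(x,r)\le C(\Lambda,m,X)$ on $r\in(1/4,4)$ (from $(\ref{3.2hbound})$ in Lemma \ref{lemma3.4} together with the Lipschitz bound of Theorem \ref{GS2.3} on $u$, combined with $u(0)=0_X$) and the monotonicity estimate $I_\phi(x,4)-I_\phi(x,1/4)\le W_{1/8}^4(x)$.

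Finally, to replace the inner constant $I_\phi(x,r)$ with $I_\phi(x,|y-x|)$, I apply the pointwise triangle inequality $|a-\alpha c|^2\le 2|a-\beta c|^2+2(\alpha-\beta)^2|c|^2$ with $a=\nabla u(y)\circ(y-x)$, $c=u(y)$, $\alpha=I_\phi(x,|y-x|)$, and $\beta=I_\phi(x,r)$; averaging in $r\in(|y-x|,2|y-x|)$ and integrating in $y\in A_{1/4}^2(x)$, the $|a-\beta c|^2$ contribution is bounded by $CW_{1/8}^4(x)$ by the previous step. For the error term, monotonicity of $I_\phi$ gives $|\alpha-\beta|\le W_{1/8}^4(x)$, and the a priori bound $W_{1/8}^4(x)\le I_\phi(x,4)\le C(\Lambda,m,X)$ (again Lemma \ref{lemma3.4}) converts the squared pinching back into a single pinching; the Lipschitz bound on $u$ keeps $|u|$ uniformly bounded on $A_{1/4}^2(x)$.

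The principal obstacle is precisely the $r$-versus-$|y-x|$ mismatch: the Cauchy--Schwarz identity naturally produces the ``wrong'' (constant-in-$r$) coefficient, while the proposition demands the variable coefficient $I_\phi(x,|y-x|)$. This is resolved by the observation that the squared pinching is absorbable into the linear pinching thanks to the a priori bound on $I_\phi$.
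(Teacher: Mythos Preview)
Your argument is correct and follows the paper's proof essentially step for step: the paper also expands $\int_{1/4}^4\partial_r I_\phi\,d\rho$ via the Cauchy--Schwarz deficit identity, applies Fubini over the annulus, and swaps $I_\phi(x,\rho)$ for $I_\phi(x,|z-x|)$ by the same triangle inequality with the squared pinching absorbed via the a priori frequency bound. The only cosmetic difference is that the paper begins by normalizing $H_\phi(0,1)=1$---which your uniform bounds on $H_\phi$ and $|u|$ implicitly require---and controls the $|u(z)|^2$ error term by recognizing the resulting weighted integral as $H_\phi$ itself (so that it cancels against the $H_\phi^{-1}$ prefactor) rather than invoking the Lipschitz bound.
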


Again, we compare to the rigid case for motivation.  If $u$ is homogeneous of order $\alpha$ about a point $x$, we can compute the derivative $\nabla u(z)\circ(z-x)=\alpha u(z)$ using the definition of conical homogeneity, and $\alpha=I_\phi(x,r)$ for any $r$ (and in particular for $r=|z-x|$).  This proposition states that the pinching bounds the $L^2$ difference of these quantities on a specific annulus; in the rigid case when the pinching vanishes, this $L^2$ difference vanishes as well.


\begin{proof}[Proof of Lemma \ref{homchar}]
For the sake of simplicity, take $x_0=0$ and suppose that $I_\phi(0,s_1)=I_\phi(0,s_2)=\alpha$.  Under these hypotheses, we have that for $s_1<r<s_2$, $\partial_rI_\phi(0,r)=0$ and that $I_\phi(0,r)\equiv\alpha$ for such $r$.  Then we must have
\[
r^2E_\phi(0,r)^2=H_\phi(0,r)\xi_\phi(0,r).
\]
On the other hand, if we expand the energy by (\ref{calc1}), we see that
\begin{align*}
r^2E_\phi(0,r)^2&=\bigg(\int-\phi'\Big(\frac{|y|}{r}\Big)\la\partial_{\nu_x}u(y), u(y)\ra dy\bigg)^2\\
&\leq\int-\phi'\Big(\frac{|y|}{r}\Big)|y|^{-1}|u(y)|^2dy\int-\phi'\Big(\frac{|y|}{r}\Big)|y|\partial_{\nu_0}u(y)|^2dy\\
&=H_\phi(0,r)\xi_\phi(0,r)
\end{align*}
where we have applied Cauchy-Schwarz to obtain the inequality in the middle step.  Since we have equality in Cauchy-Schwarz, we must have that almost everywhere on the support of $\phi'\big(\frac{|y|}{r}\big)$, we have that $|y|^{-1/2}u(y)$ and $|y|^{1/2}\partial_{\nu_0}u(y)$ are a fixed scalar multiple of each other.  In particular, the support of $\phi'\big(\frac{|y|}{r}\big)$ is the annulus $A_{r/2}^{r}(0)=\{y:\frac{r}{2}\leq|y|\leq r\}$.  If we write $\partial_{\nu_0}u(y)=\frac{\beta}{|y|}u(y)$, we can directly see that
\begin{align*}
rE_\phi(0,r)&=\int-\phi'\Big(\frac{|y|}{r}\Big)\la\partial_{\nu_0}u(y), u(y)\ra dy\\
&=\int-\phi'\Big(\frac{|y|}{r}\Big)\frac{\beta}{|y|}|u(y)|^2dy = \beta H_\phi(0,r).
\end{align*}

But we know, from the fact that $I_\phi(0,r)=\alpha$, that $rE_\phi(0,r)=\alpha H_\phi(0,r)$.  Hence 

\begin{equation}\label{eq:homchar1}
\partial_{\nu_0}u(y)=\frac{\alpha}{|y|}u(y)
\end{equation}
when $\frac{r}{2}\leq|y|\leq r$ .  Allowing $r$ to vary between $s_1,s_2$ we see that this holds for $\frac{s_1}{2}\leq|y|\leq s_2$.  Due to this, we see that $u(y)$ points in the same direction as $u(ty)$ for $t>0$, as long as $|y|,|ty|\in[\frac{s_1}{2},s_2]$ (by integrating $\partial_{\nu_0} u$).  Additionally, we have that
\[
\partial_{\nu_0}|u(y)|=\frac{\alpha}{|y|}|u(y)|.
\]

Now, fixing an $\omega$ with $|\omega|=1$ and integrating, we see that for any $r\in[s_1/2,s_2]$
\[
\int_{r}^{s_2}\frac{\partial_{\nu_0}|u(t\omega)|}{|u(t\omega)|}dt=\int_{r}^{s_2}\frac{\alpha}{|t|}dt
\]
and thus that
\[
\ln\frac{|u(s_2\omega)|}{|u(r\omega)|}=\alpha\ln\frac{s_2}{r}
\]
or equivalently
\[
|u(s_2\omega)|=\Big(\frac{s_2}{r}\Big)^\alpha|u(r\omega)|.
\]

Combining this with the fact that $u(y)$ and $u\big(\frac{s_2y}{|y|}\big)$ point in the same direction, for $y$ in the annulus $A_{s_1/2}^{s_2}(0)$ we have that
\[
u(y)=\Big(\frac{|y|}{s_2}\Big)^\alpha u\Big(\frac{s_2(y)}{|y|}\Big)
\]
which shows that $u$ is conically homogeneous of degree $\alpha$ about $0$ on the annulus $A_{s_1/2}^{s_2}(0)$.

We now use a bootstrapping argument to show that $u$ is in fact homogeneous on the entire ball $B_{s_2}(0)$.

On $B_{s_2}(0)$, define the function \[u_1(y):=\big(\frac{s_2}{s_1}\big)^\alpha u\big(\frac{s_1y}{s_2}\big).\]

Because $u_1$ is a rescaling of a minimizing map, $u_1$ is energy minimizing as well.  Because $u$ is conically homogeneous on the annulus $A_{s_1/2}^{s_2}(0)$, the boundary values of $u_1$ agree with the boundary values of $u$.  Hence, by the uniqueness of energy minimizers we have that $u_1\equiv u$ on $B_{s_2}(0)$.

Now, we make two observations.  First, because $u_1$ is a rescaling of $u$, if $u_1$ is homogeneous about $0$ of degree $\alpha$ on the annulus $A_r^t(0)\subset B_{s_2}(0)$, then $u$ is homogeneous about $0$ of degree $\alpha$ on the annulus $A_{(s_1/s_2)r}^{(s_1/s_2)t}(0)\subset B_{s_1}(0)$.  In particular, if $z,\lambda z$ ($\lambda>0$) are both in the annulus $A_{(s_1/s_2)r}^{(s_1/s_2)t}(0)$, we have that
\[
\lambda^\alpha\big(\frac{s_2}{s_1}\big)^\alpha u(z)=\lambda^\alpha u_1\big(\frac{s_2z}{s_1}\big)=u_1\big(\frac{s_2\lambda z}{s_1}\big)=\big(\frac{s_2}{s_1}\big)^\alpha u(\lambda z).
\]
The middle equality uses that $\frac{s_2 z}{s_1},\frac{s_2\lambda z}{s_1}\in A_r^t(0)$ and the fact that $u_1$ is homogeneous of degree $\alpha$ on this annulus.  The outer two equalities are the definition of $u_1$.  However, cancelling the factors of $\big(\frac{s_2}{s_1}\big)^\alpha$ we see that $u$ is homogeneous on the claimed annulus.

The second observation we make is that because $u_1\equiv u$ on $B_{s_2}(0)$, if $u$ is homogeneous about $0$ of degree $\alpha$ on the annulus $A_r^t(0)\subset B_{s_2}(0)$, then $u_1$ is also homogeneous about $0$ of degree $\alpha$ on the same annulus.

These two observations together in fact imply that because $u$ is homogeneous about $0$ on the annulus $A_{s_1/2}^{s_2}(0)$, then it is homogeneous on the full ball $B_{s_2}(0)$.  To see this, let 
\[
r_0=\inf\{r:u\text{ is homogeneous of degree }\alpha\text{ on }A_r^{s_2}(0)\}.
\]
We already know that $r_0\leq\frac{s_1}{2}$, by definition.

We show that $r_0=0$.  By definition, if $r_0>0$, $u$ is homogeneous on $A_{r_0}^{s_2}(0)$.  By our second observation above, we then see that $u_1$ is homogeneous on $A_{r_0}^{s_2}(0)$, and by our first observation we then see that $u$ is therefore homogeneous on $A_{(s_1/s_2)r_0}^{s_1}(0)$.  Because $r_0<s_1$, the two annuli $A_{(s_1/s_2)r_0}^{s_1}(0)$ and $A_{r_0}^{s_2}(0)$ have nonempty intersection.  Therefore, in fact, $u$ is homogeneous on their union $A_{(s_1/s_2)r_0}^{s_2}(0)$.

However, $\frac{s_1}{s_2}r_0<r_0$ because $s_1<s_2$ and $0<r_0$.  This contradicts the definition of $r_0$; therefore $r_0=0$.

Because $r_0=0$, $u$ is homogeneous on the full ball $B_{s_2}(0)$, by the definition of homogeneity.



\end{proof}

\begin{proof}[Proof of Proposition \ref{prop4.3}]
First, we rescale $u$ so that $H_\phi(0,1)=1$.  Using Proposition \ref{calculations}, we compute
\begin{align*}
W_{1/4}^4(x)&=\int_{1/4}^4\partial_rI_\phi(x,\rho)d\rho=\int_{1/4}^42(\rho H_\phi(x,\rho))^{-1}(\xi_\phi(x,\rho)-\rho I_\phi(x,\rho)E_\phi(x,\rho))d\rho\\
&=\int_{1/4}^42(\rho H_\phi(x,\rho))^{-1}(\xi_\phi(x,\rho)-2\rho I_\phi(x,\rho)E_\phi(x,\rho)+I_\phi(x,\rho)^2H_\phi(x,\rho))d\rho\\
&=\int_{1/4}^42(\rho H_\phi(x,\rho))^{-1}\int-\phi'\bigg(\frac{|y-x|}{\rho}\bigg)|y-x|^{-1}\\
&\hspace{.75in}\bigg(|\partial_{\eta_x}u|^2-2I_\phi(x,\rho)\partial_{\eta_x}u\cdot u+(I_\phi(x,\rho)d(u,0_X))^2\bigg)dy\phantom{i}d\rho\\
&=\int_{1/4}^42(\rho H_\phi(x,\rho))^{-1}\int-\phi'\bigg(\frac{|y-x|}{\rho}\bigg)|y-x|^{-1}\\
&\hspace{.75in}\underbrace{|\nabla u(y)\circ(y-x)-I_\phi(x,\rho)u(y)|^2}_{=:\theta(y,\rho)}dy\phantom{i}d\rho
\end{align*}
(where $\eta_x=y-x=|y-x|\nu_x$ in the above computations).

Now, by the definition of $\phi$, we have that $\phi'=-2{\bf 1}_{[1/2,1]}$, so we can assume that $\frac{\rho}{2}\leq|y-x|\leq\rho$, as otherwise the inner integrand vanishes.  Since $\frac{1}{4}\leq\rho\leq4$, we have that $\frac{1}{8}\leq|y-x|\leq4$.

Introducing the function
\[
\zeta(y):=|\nabla u(y)\circ(y-x)-I_\phi(x,|y-x|)u(y)|^2,
\]
we note that to establish the proposition, we must bound the integral of $\zeta$.  Using the elementary inequality $(a+b)^2\leq2(a^2+b^2)$, the above observations, and the monotonicity of the frequency, we see that
\[
\zeta(y)\leq2\theta(y,\rho)+2|I_\phi(x,\rho)-I_\phi(x,|y-x|)|^2d^2(u(y),0_X)\leq2\theta(y,\rho)+2(W_{1/8}^4(x))^2d^2(u(y),0_X).
\]

Since (\ref{3.2fbound}) bounds $W_{1/8}^4(x)$ in terms of $\Lambda$, we have that
\[
\zeta(y)\leq2\theta(y,\rho)+CW_{1/8}^4(x)d^2(u(y),0_X).
\]

We then combine this inequality with the earlier computations to conclude
\begin{align*}
W_{1/4}^4(x)\geq&\int_{1/4}^4(\rho H_\phi(x,\rho))^{-1}\int-\phi'\bigg(\frac{|y-x|}{\rho}\bigg)|y-x|^{-1}\zeta(y)dy\phantom{|}d\rho\\
&-CW_{1/8}^4(x)\int_{1/4}^42(\rho H_\phi(x,\rho))^{-1}\underbrace{\int-\phi'\bigg(\frac{|y-x|}{\rho}\bigg)|y-x|^{-1}d^2(u(y),0_X)dy}_{=H_\phi(x,\rho)}\phantom{|}d\rho\\
\geq&\int_{1/4}^4(\rho H_\phi(x,\rho))^{-1}\int-\phi'\bigg(\frac{|y-x|}{\rho}\bigg)|y-x|^{-1}\zeta(y)dy\phantom{|}d\rho-CW_{1/8}^4(x).
\end{align*}

Using (\ref{3.2fbound}) again, we have a uniform bound $I_\phi(x,\rho)\leq C$ for every $\rho\leq4$, so we use (\ref{3.2hbound}) and (\ref{calc7}), and the assumption that $H_\phi(0,1)=1$, to conclude that $H_\phi(x,\tau)$ is uniformly bounded from below for $\tau\in[1/4,4]$.  We thus have that
\[
CW_{1/8}^4(x)\geq\int\zeta(y)\underbrace{\int_{1/4}^4-\phi'\bigg(\frac{|y-x|}{\rho}\bigg)|y-x|^{-1}d\rho}_{=:M(y)}\phantom{|}dy.
\]

Since $\phi'=-2{\bf 1}_{[1/2,1]}$, we compute that
\[
M(y)=\frac{2}{|y-x|}\big[\text{min}\{4,2|y-x|\}-\text{max}\{\frac{1}{4},|y-x|\}\big]\geq2{\bf1}_{B_2(x)\setminus B_{1/4}(x)}(y)
\]
which completes the proof.
\end{proof}

\begin{proof}[Proof of Theorem \ref{thm4.2}]
We assume without loss of generality that $r=1$ and $H_\phi(0,1)=1$, and introduce the notation
\[
W(x):=W_{1/8}^4(x)=I_\phi(x,4)-I_\phi(x,1/8)
\]
for the sake of legibility.

We also write
\[
\mu_x:=-|y-x|^{-1}\phi'(|y-x|)dy
\]
and
\[
\eta_x(y):=y-x=|y-x|\nu_x(y)\hspace{16mm}v:=x_2-x_1.
\]

From (\ref{calc3}) and (\ref{calc5}) we compute that
\begin{equation}\label{freqderiv}
\partial_vI_\phi(x,1)=2H_\phi(x,1)^{-1}\bigg[\int\la\partial_{\eta_x}u,\partial_vu\ra d\mu_x-I_\phi(x,1)\int\la u,\partial_vu\ra d\mu_x\bigg].
\end{equation}

For $\ell=1,2$, we define
\[
\mathcal{E}_\ell(z):=\partial_{\eta_{x_\ell}}u-I_\phi(x_\ell,|z-x_\ell|)u(z).
\]
Note that expressions of this form are precisely the ones that are bounded in Proposition \ref{prop4.3}.

Next, note that
\begin{align*}
\partial_vu(z)&=\nabla u(z)\circ v=\nabla u(z)\circ(z-x_1)-\nabla u(z)\circ(z-x_2)=\partial_{\eta_{x_1}}u(z)-\partial_{\eta_{x_2}}u(z)\\
&=\underbrace{(I_\phi(x_1,|z-x_1|)-I_\phi(x_2,|z-x_2|))}_{=:\mathcal{E}_3(z)}u(z)+\mathcal{E}_1(z)-\mathcal{E}_2(z).
\end{align*}

We insert this into (\ref{freqderiv}) to see that
\begin{align}
\partial_vI_\phi(x,1)&=\underbrace{2H_\phi(x,1)^{-1}\int\la(\E_1-\E_2),\partial_{\eta_x}u\ra\phantom{i}d\mu_x}_{=:(A)}-\underbrace{\frac{E_\phi(x,1)}{H_\phi(x,1)^2}\int\la(\E_1-\E_2),u\ra d\mu_x}_{=:(B)}\\
&+\underbrace{2H_\phi(x,1)^{-1}\bigg[\int\la\E_3u,\partial_{\eta_x}u\ra d\mu_x-I_\phi(x,1)\int\E_3|u|^2d\mu_x\bigg]}_{=:(C)}.\nonumber
\end{align}

We then rewrite $\E_3(z)$ as
\[
\E_3(z)=\underbrace{I_\phi(x_1,1)-I_\phi(x_2,1)}_{=:\E}+\underbrace{I_\phi(x_1,|z-x_1|)-I_\phi(x_1,1)}_{=:\E_4(z)}-\underbrace{[I_\phi(x_2,|z-x_2|)-I_\phi(x_2,1)]}_{=:\E_5(z)}.
\]

Because $\mu_x$ is supported on $B_1(x)\setminus B_{1/2}(x)$, and since $x$ is on the line segment between $x_1$ and $x_2$, we have that $|x-x_\ell|\leq\frac{1}{4}$ for $\ell=1,2$ and thus $\frac{1}{4}\leq|z-x_\ell|\leq2$ for all $z\in\text{supp}(\mu_x)$.

It is then clear that for such $z\in\text{supp}(\mu_x)$, for all $x$ along the segment between $x_1$ and $x_2$,
\[
|\E_4(z)|+|\E_5(z)|\leq W(x_1)+W(x_2).
\]

Further, by (\ref{calc1}), we see that
\begin{align*}
\int\E\la u,\partial_{\eta_x}u\ra\phantom{i}d\mu_x-I_\phi(x,1)\int\E d^2(u,0_X)d\mu_x&=\E\bigg[\int|y-x|\la\partial_{\nu_x}u, u\ra\phantom{i}d\mu_x-E_\phi(x,1)\bigg]\\
&=\E\bigg[-\int\phi'(|y-x|)\la\partial_{\nu_x}u, u\ra\phantom{i}dy-E_\phi(x,1)\bigg]\\
&\stackrel{(\ref{calc1})}{=}0
\end{align*}

We thus conclude that
\begin{align*}
(C)&\leq[W(x_1)+W(x_2)]2H_\phi(x,1)^{-1}\int(|u||\nabla u|+|u|^2)d\mu_x\\
&\leq[W(x_1)+W(x_2)]2H_\phi(x,1)^{-1}\bigg(2H_\phi(x,1)+\int|\nabla u|^2d\mu_x\bigg)\\
&\leq [W(x_1)+W(x_2)](1+CH_\phi(x,1)^{-1}E_\phi(x,2))
\end{align*}
where the constant $C$ depends on the Lipschitz bound of $\phi$.  By (\ref{3.2fbound}), we have $I_\phi(x,4)\leq C(m,\phi,\Lambda)$ so that by (\ref{calc7}) \[H_\phi(x,1)^{-1}E_\phi(x,2)\leq CH_\phi(x,2)/H_\phi(x,1)\leq C.\]  Thus, $(C)\leq C[W(x_1)+W(x_2)]$, as desired.

To bound $(A)$, we see that
\begin{align*}
(A)^2&\leq4H_\phi(x,1)^{-2}\int|\E_1-\E_2|^2d\mu_x\int|\partial_{\eta_x}u|^2d\mu_x\\
&\leq4H_\phi(x,1)^{-2}\int|\E_1-\E_2|^2d\mu_x\int|\nabla u|^2d\mu_x.
\end{align*}

Using (\ref{3.2fbound}) again, we see that $I_\phi(x,\rho)\leq C$ for $\rho\leq4$, and we then use (\ref{3.2hbound}), (\ref{calc7}), and the fact that $H_\phi(0,1)=1$ by assumption, to conclude that for $\rho\in[1/4,4]$ (for example), $H_\phi(x,\rho)$ is uniformly bounded from below.  Hence
\[
(A)^2\leq C\bigg(\int(|\E_1|^2+|\E_2|^2)d\mu_x\bigg)\int|\nabla u|^2d\mu_x
\]
where $C$ depends on this lower bound on $H_\phi$.  We have also used the bound $|\E_1-\E_2|^2\leq2(|\E_1|^2+|\E_2|^2)$.

Finally, we use the same bounds used above, for $(C)$ to bound $\int|\nabla u|^2d\mu_x$, concluding that
\[
|(A)|\leq C\bigg(\int(|\E_1|^2+|\E_2|^2)d\mu_x\bigg)^{\frac{1}{2}}.
\]

Similarly, for $(B)$, we use Cauchy-Schwarz to see that
\begin{align*}
(B)^2\leq E_\phi(x,1)^2H_\phi(x,1)^{-4}\int(|\E_1-\E_2|^2)d\mu_x\int|u|^2d\mu_x.
\end{align*}
The second integral here is simply $H_\phi(x,1)$, and we have already seen how to bound $\int(|\E_1-\E_2|^2)d\mu_x$ from above and $H_\phi(x,1)$ from below.  Additionally, using (\ref{calc7}), (\ref{3.2hbound}), and (\ref{3.2fbound}), we have that $E_\phi(x,1)\leq H_\phi(x,1)I_\phi(x,1)\leq C$.  So we also have that
\[
|(B)|\leq C\bigg(\int(|\E_1|^2+|\E_2|^2)d\mu_x\bigg)^{\frac{1}{2}}.
\]

Combining the bounds on $(A)$, $(B)$, and $(C)$, we have
\[
\partial_vI_\phi(x,1)\leq C(W(x_1)+W(x_2))+C\bigg(\int(|\E_1|^2+|\E_2|^2)d\mu_x\bigg)^{\frac{1}{2}}.
\]

To conclude, it suffices to prove the bound
\begin{equation}\label{4.2goal}
\int|\E_\ell|^2d\mu_x\leq CW(x_\ell).
\end{equation}

For $y\in\text{supp}(\mu_x)$, we have that $\frac{1}{2}\leq|y-x|\leq1$, and since (as earlier) $|x-x_\ell|\leq\frac{1}{4}$, we have that $\frac{1}{4}\leq|y-x_\ell|\leq\frac{5}{4}$.  Also, recall that $d\mu_x(y)=-\phi'(|y-x|)|y-x|^{-1}dy$ and that for $y\in\text{supp}(\mu_x)$, $0\leq-\phi'(|y-x|)|y-x|^{-1}\leq4$ by our choice of $\phi$.  Hence, $-\phi'(|y-x|)|y-x|^{-1}\leq4{\bf 1}_{B_2(x_\ell)\setminus B_{1/4}(x_\ell)}(y)$.

Therefore,
\[
\int|\E_\ell|^2d\mu_x\leq4\int_{B_2(x_\ell)\setminus B_{1/4}(x_\ell)}|\E_\ell|^2dy
\]
and (\ref{4.2goal}) then follows from Proposition \ref{prop4.3}.

We thus obtain the bound
\[
\partial_vI_\phi(x,1)\leq C(W(x_1)+W(x_2))+C\big[W(x_1)+W(x_2)\big]^{\frac{1}{2}}
\]
and since we know that the frequencies here are uniformly bounded in terms of $\Lambda$, the second term bounds the first.  Thus,
\[
\partial_vI_\phi(x,1)\leq C\big[W(x_1)+W(x_2)\big]^{\frac{1}{2}}
\]
for each $x$ on the line segment between $x_1$ and $x_2$.  Reversing $x_1$ and $x_2$, we find that
\[
|\partial_vI_\phi(x,1)|\leq C\big[W(x_1)+W(x_2)\big]^{\frac{1}{2}}.
\]
The conclusion of the theorem follows by integrating this inequality between any points on the line segment between $x_1$ and $x_2$.
\end{proof}

\section{Mean Flatness}\label{Jones5}
In this section, we prove that the frequency pinching bounds the mean flatness of certain measures, on balls that intersect $\A$ nontrivially.

\begin{definition}\label{def:Jones}
For a Radon measure $\mu$ on $\R^m$ and $k\in\{0,1,\dots,m-1\}$, $x\in\R^m$ and $r>0$, we define the {\bf $k^{\text{th}}$ mean flatness} of $\mu$ in the ball $B_r(x)$ to be
\[
D_\mu^k(x,r):=\inf_Lr^{-k-2}\int_{B_r(x)}\text{dist}(y,L)^2\phantom{i}d\mu(y),
\]
where the infimum is taken over all affine $k$-planes $L$.
\end{definition}

Another characterization of the mean flatness is in terms of a bilinear form.  Suppose that $B_{r_0}(x_0)$ is such that $\mu(B_{r_0}(x_0))>0$, and $\overline{x}_{x_0,r_0}$ is the {\bf barycenter} of $\mu$ in this ball,
\[
\overline{x}_{x_0,r_0}:=\frac{1}{\mu(B_{r_0}(x_0))}\int_{B_{r_0}(x_0)}x\phantom{i}d\mu(x).
\]

Consider the positive semi-definite form
\[
b(v,w):=\int_{B_{r_0}(x_0)}((x-\overline{x}_{x_0,r_0})\cdot v)((x-\overline{x}_{x_0,r_0})\cdot w)\phantom{i}d\mu(x).
\]

We can diagonalize this bilinear form by an orthonormal basis of $\R^m$; that is, we can find $\{v_1,\dots,v_m\}$ so that the $v_i$ are an orthonormal basis of $\R^m$ ($v_i\cdot v_j=\delta_{ij}$), and so that
\[
b(v_i,v_i)=\lambda_i\hspace{5mm}\text{and}\hspace{5mm}b(v_i,v_j)=0\text{ if }i\neq j
\]
for some $0\leq\lambda_m\leq\lambda_{m-1}\leq\dots\leq\lambda_1$.

We then have that
\begin{equation}\label{eigeneq}
\int_{B_{r_0}(x_0)}((x-\overline{x}_{x_0,r_0})\cdot v_i)x\phantom{i}d\mu(x)=\lambda_iv_i
\end{equation}
for each $1\leq i\leq m$.

The $k^{\text{th}}$ mean flatness of $\mu$ in the ball $B_{r_0}(x_0)$ can then be computed as
\[
D_\mu^k(x_0,r_0)=r_0^{-k-2}\sum_{\ell=k+1}^m\lambda_\ell
\]
and the infimum in the definition of the mean flatness is achieved by any affine $k$-plane $L=\overline{x}_{x_0,r_0}+\text{Span}\{v_1,\dots,v_k\}$, where $v_1,\dots,v_m$ form an orthonormal eigenbasis with eigenvalues $\lambda_1\geq\lambda_2\geq\dots\geq\lambda_m$.

We have the following bound on the $(m-2)^{\text{th}}$ mean flatness of a measure $\mu$ by the frequency pinching, on balls that intersect $\A$, the set of points of high order.

\begin{proposition}\label{flatbound}
Under Assumptions \ref{cone} and \ref{bound}, there exists a positive constant $C(\Lambda,m,X)$ so that, for a finite nonnegative Radon measure $\mu$, and a ball $B_{r/8}(x_0)$ such that $B_{r/8}(x_0)\cap\A$ is nonempty, then
\[
D_\mu^{m-2}(x_0,r/8)\leq\frac{C}{r^{m-2}}\int_{B_{r/8}(x_0)}W_{r/8}^{4r}(x)\phantom{i}d\mu(x).
\]
\end{proposition}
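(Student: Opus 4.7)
The plan is to follow the Naber-Valtorta and DMSV framework, reformulating the $(m-2)$-mean flatness in terms of the bilinear form $b$ on $\R^m$ and bounding its two smallest eigenvalues by the frequency pinching. After rescaling so that $r = 1$ and translating so that $x_0 = 0$, fix $y_0 \in B_{1/8}(0) \cap \A$ (which exists by hypothesis) and let $\bar y$ be the barycenter of $\mu$ on $B_{1/8}(0)$. Diagonalizing the bilinear form $b(v, w) = \int ((x - \bar y) \cdot v)((x - \bar y) \cdot w) \, d\mu(x)$ in an orthonormal basis $\{v_i\}$ with eigenvalues $\lambda_1 \geq \cdots \geq \lambda_m$, the formula $D_\mu^{m-2}(0, 1/8) = 8^m(\lambda_{m-1} + \lambda_m)$ recalled after Definition \ref{def:Jones} reduces the proposition to the estimate $\lambda_{m-1} + \lambda_m \leq C \int_{B_{1/8}(0)} W_{1/8}^4 \, d\mu$.

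The heart of the proof is then the construction of an affine $(m-2)$-plane $L$ such that $\int \text{dist}(x, L)^2 \, d\mu \leq C \int W \, d\mu$, since this quantity majorizes $\lambda_{m-1} + \lambda_m$. Inductively select points $y_1, \ldots, y_{m-2}$ from the support of $\mu$ (or a small thickening of it) so that at each stage $i$, the point $y_i - y_0$ has maximal component orthogonal to $\mathrm{span}(y_1 - y_0, \ldots, y_{i-1} - y_0)$. If this orthogonal spread is already below a fixed constant multiple of $\int W\, d\mu$ at some stage, then $\mu$ is concentrated near an affine subspace of strictly lower dimension than $m-2$ and the bound follows immediately; otherwise, the affine $(m-2)$-plane $L := \text{aff}(y_0, y_1, \ldots, y_{m-2})$ is nondegenerate and well-separated.

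For each $x \in B_{1/8}(0)$ we then bound $\text{dist}(x, L)^2$ pointwise. The heuristic is: if $x$ lay far from $L$, then $\{y_0, y_1, \ldots, y_{m-2}, x\}$ would be affinely independent, so applying Theorem \ref{thm4.2} to the pairs of these points would force $I_\phi(\cdot, 1)$ to be almost constant on their convex hull up to pinching errors. By the characterization in Lemma \ref{homchar}, this would make $u$ almost conically homogeneous of the same order about each of these $m$ points, which by the argument of Lemma \ref{twohomogeneous} (alluded to in the proof of Theorem \ref{thm4.2}) manifests as $u$ being translation-invariant in the $m-1$ directions $y_i - y_0$ and $x - y_0$. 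But since $y_0 \in \A$ has $\text{Ord}(y_0) \geq 1 + \widehat\epsilon$ by the order gap of Theorem \ref{thm:BigGS} part (1), the set $\A$ would inherit a translation-invariance subspace of dimension $m-1$, contradicting the codimension-two bound on $\A$ from Theorem \ref{thm:BigGS} part (4). Quantitatively, this dichotomy should yield
\[
\text{dist}(x, L)^2 \leq C\biggl[W(x) + \sum_{i=0}^{m-2} W(y_i)\biggr],
\]
and integrating this against $\mu$ --- while choosing the $y_i$ as "typical" witnesses of the orthogonal spread so that $\mu(B_{1/8}(0)) \cdot W(y_i) \lesssim \int W\, d\mu$ via an averaging argument --- yields the desired integral estimate.

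The main obstacle is quantitative control of this selection: Theorem \ref{thm4.2} contains the factor $|z-y|/|x_1-x_2|$, which degenerates when the selected points cluster together, and the qualitative obstruction sketched above must be upgraded to a sharp estimate with constants depending only on $\Lambda, m, X$. Maintaining uniform separation of the $y_i$ through the greedy selection and executing the averaging cleanly is the principal technical challenge, and mirrors the crux of the corresponding arguments in \cite{NV} and \cite{DMSV}.
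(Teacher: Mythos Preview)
Your reduction to bounding $\lambda_{m-1}+\lambda_m$ via the eigenvalue characterization of $D_\mu^{m-2}$ is the right starting point, but the subsequent strategy has a genuine gap. The proposed pointwise estimate
\[
\text{dist}(x,L)^2 \leq C\Bigl[W(x)+\sum_{i=0}^{m-2}W(y_i)\Bigr]
\]
is not established, and the heuristic you give does not supply it. A contradiction argument of the type you sketch (points far from $L$ would force almost-invariance in $m-1$ directions, contradicting the codimension-two bound on $\A$) is qualitative: it can show that $\text{dist}(x,L)$ is small once the pinchings are below some threshold, but it does not yield a \emph{linear} bound of $\text{dist}(x,L)^2$ by the pinching. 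You flag this yourself as the ``principal technical challenge,'' and indeed neither Theorem~\ref{thm4.2} nor the Lemma~\ref{twohomogeneous} mechanism gives such a pointwise estimate---Theorem~\ref{thm4.2} controls \emph{frequency differences} along segments, not distances to an affine plane, and the degeneracy factor $|z-y|/|x_1-x_2|$ you worry about is a real obstruction to the greedy selection.

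The paper's proof avoids this entirely by never selecting points from $\mathrm{spt}(\mu)$. Instead it uses the eigenvector identity~(\ref{eigeneq}) together with the barycenter condition to write, for each $j$ and any $z,\alpha$,
\[
-\lambda_j\,\partial_{v_j}u(z)=\int_{B_{1/8}(x_0)}((x-\bar x)\cdot v_j)\bigl(\nabla u(z)\circ(z-x)-\alpha u(z)\bigr)\,d\mu(x).
\]
Squaring and applying Cauchy--Schwarz gives $\lambda_j|\partial_{v_j}u(z)|^2\leq \int|\nabla u(z)\circ(z-x)-\alpha u(z)|^2\,d\mu(x)$. Summing over $j\leq m-1$ and integrating in $z$ over an annulus yields
\[
(\lambda_{m-1}+\lambda_m)\int_{\text{annulus}}\sum_{j=1}^{m-1}|\partial_{v_j}u|^2 \;\leq\; C\int_{B_{1/8}(x_0)}\int_{\text{annulus}}|\nabla u(z)\circ(z-x)-\alpha u(z)|^2\,dz\,d\mu(x).
\]
The right-hand side is then bounded by $C\int W_{1/8}^4\,d\mu$ using Proposition~\ref{prop4.3} (after choosing $\alpha$ to be the $\mu$-average of $I_\phi(\cdot,1)$ and invoking Theorem~\ref{thm4.2} for the cross term). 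The role of the hypothesis $B_{1/8}(x_0)\cap\A\neq\varnothing$ is confined to a single step: a compactness--contradiction argument (Lemmas~\ref{compactness} and~\ref{extension}) shows the left factor $\int\sum_{j\leq m-1}|\partial_{v_j}u|^2$ is bounded below by a positive constant $c(\Lambda)$, since otherwise a limit map would depend on one variable only and still carry a high-order point. This is where your qualitative obstruction lives---as a \emph{lower bound on energy in $m-1$ directions}, not as a pointwise distance estimate.
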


We remark that this result is stated in a slightly more general manner than the corresponding result of \cite{DMSV}, Proposition 5.3.  In particular, rather than assuming that $\text{spt}(\mu)$ is a subset of $\A$, we assume only that $B_{r/8}(x_0)\cap\A$ is nonempty.  However, a careful inspection of the proof in \cite{DMSV} reveals that the latter assumption is all that is needed in their proof.

To show this, we need the following lemma, which we can think of as a (very weak) unique extension lemma for certain harmonic maps into $F$-connected complexes.

\begin{lemma}\label{extension}
If $x_1,\dots,x_m$ are orthonormal coordinates for $\R^m$, and a minimizing map $u:\Omega\to X$ is a function of $x_1$ alone on $B_r(x_0)\subset\Omega$, $\Omega\subset\R^m$ is convex, and $X$ is an $F$-connected complex, then $u$ is a function of $x_1$ alone on all of $\Omega$.
\end{lemma}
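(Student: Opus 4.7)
The plan is to exploit that, away from the singular set, $u$ is a classical (in particular real-analytic) harmonic map into a Euclidean flat, propagate the property ``$u$ is a function of $x_1$'' across the regular set using analytic continuation, and then extend to all of $\Omega$ by continuity.  The case $m=1$ is trivial, so assume $m\geq2$.

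Let $R:=\Omega\setminus\Si(u)$.  By Theorem \ref{thm:BigGS}(3), $\Si(u)$ has Hausdorff dimension at most $m-2$, so $R$ is open and dense in $\Omega$.  Moreover, $R$ is connected: a set of Hausdorff dimension at most $m-2$ cannot disconnect a convex open subset of $\R^m$, as its topological dimension is likewise bounded by $m-2$.  At each $y\in R$, the definition of a regular point yields a neighborhood $U$ of $y$ on which $u$ maps into a $k$-flat $F\subset X$; since $F$ is totally geodesic and isometric to a subset of $\R^k$, the restriction $u|_U$ is energy-minimizing into $\R^k$ and is therefore a classical harmonic map, hence real-analytic on $U$.

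Now introduce
\[
S:=\{\,y\in R:u\text{ is a function of }x_1\text{ on a neighborhood of }y\,\}.
\]
This set is open by definition and contains $B_r(x_0)\cap R$, which is nonempty by density of $R$.  I would show $S$ is relatively closed in $R$: if $y_n\in S$ converge to $y\in R$, pick a connected neighborhood $U$ of $y$ on which $u$ is classical harmonic into some $\R^k$; for large $n$, $y_n\in U$ and $u$ is a function of $x_1$ on a small open subset of $U$.  The real-analytic functions $\partial_{x_i}u$ ($i=2,\dots,m$) therefore vanish on an open subset of the connected set $U$, and so vanish on all of $U$.  Thus $u$ is a function of $x_1$ on all of $U$, giving $y\in S$.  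Connectedness of $R$ then forces $S=R$.

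It remains to extend the property to $\Si(u)$.  Write $u(y)=f(y_1)$ for $y\in R$; since every slice $\{y_1=c\}\cap\Omega$ has dimension $m-1>m-2$ and so is not contained in $\Si(u)$, the function $f$ is defined on all of $\pi_1(\Omega)$ and is continuous there (using continuity of $u$ along lines parallel to $e_1$ within $R$).  For any $y\in\Omega$, choose $z_n\in R$ with $z_n\to y$; continuity of $u$ (Theorem \ref{GS2.3}) and of $f$ give $u(y)=\lim_n u(z_n)=\lim_n f((z_n)_1)=f(y_1)$, completing the proof.  The main subtlety is the connectedness of $R$, which I would handle by citing the standard topological-dimension obstruction to separation in $\R^m$ described above.
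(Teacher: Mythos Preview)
Your approach is essentially the paper's: propagate ``$u$ depends only on $x_1$'' through the regular set $R=\Omega\setminus\Si(u)$ by analytic continuation, then extend by continuity. The open--closed argument for $S$ in the connected set $R$ is fine and matches the paper's path-and-balls version. The gap is in the last step.

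You write ``$u(y)=f(y_1)$ for $y\in R$'', but what you have established is only that $u$ is \emph{locally} a function of $x_1$ on $R$, i.e.\ $\partial_{x_i}u\equiv0$ for $i\geq2$ there. To get a well-defined $f$ you need $u$ to be constant on each slice $P_c\cap R$ where $P_c=\{y_1=c\}\cap\Omega$, and that requires $P_c\cap R$ to be connected. Connectedness of $R$ does not imply this: removing a closed set of Hausdorff dimension $m-2$ from $\Omega$ leaves $R$ connected, but its intersection with a fixed $(m-1)$-dimensional slice may well be disconnected (the singular set can meet a particular slice in a set of dimension $m-2$, i.e.\ codimension $1$ in the slice). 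Nor does ``$u$ continuous on $P_c$ and locally constant on a dense open subset'' force $u$ to be constant on $P_c$; a Cantor-staircase-type function shows this can fail.

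The paper closes exactly this gap with a slicing argument: for almost every $c$ one has $\dim_{\mathcal H}(\Si(u)\cap P_c)\leq m-3$ (a Fubini-type fact for Hausdorff dimension, cf.\ Federer), so by topological dimension $\Si(u)\cap P_c$ cannot disconnect the convex $(m-1)$-dimensional slice $P_c$; hence $P_c\cap R$ is connected and $u|_{P_c}$ is constant for a.e.\ $c$. For the remaining null set of $c$'s one then uses continuity of $u$ and density of the good $c$'s. With this in place, your final continuity argument goes through.
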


We remark that this result need not hold for non-convex domains.  Consider the domain shown in Figure \ref{fig:nonunique}, $\Omega=I\cup II$ for the indicated regions $I$ and $II$.

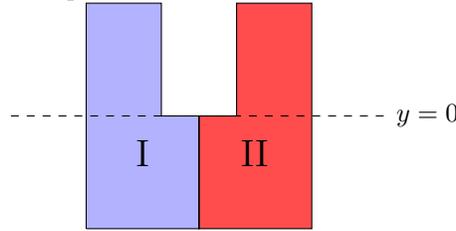
\begin{figure}\centering
Non-unique extension for a non-convex domain

\begin{tikzpicture}
\fill[blue!30!white](0,0) rectangle (1,3);
\fill[blue!30!white](1,0) rectangle (1.5,1.5);
\fill[red!70!white](1.5,0) rectangle (2,1.5);
\fill[red!70!white](2,0) rectangle (3,3);
\draw (.75,1) node {\LARGE I};
\draw (2.24,1) node {\LARGE II};
\draw[black](0,0)--(0,3)--(1,3)--(1,1.5)--(2,1.5)--(2,3)--(3,3)--(3,0)--(1.5,0)--(1.5,1.5)--(1.5,0)--(0,0);
\draw[dashed](-1,1.5)--(4,1.5) node[anchor=west] {$y=0$};
\end{tikzpicture}
\caption{The region shown is a square, with a rectangle removed from the top-middle, so that the top half of the square is broken into two pieces.  The left of the region is labelled ``I" and the right is labelled ``II," so that these meet in the middle of the lower half of the square and are separated by the removed rectangle in the upper half.  The line dividing the lower and upper halves of this square is marked as the $x$-axis, $y=0$.}\label{fig:nonunique}
\end{figure}

We shall define a harmonic map from this region $\Omega$ of the plane into the tripod $Y$, which consists of three rays joined at a common vertex $0_Y$.  Call these rays $R_0,R_1,R_2$.  Let $\gamma_1(x)$ be the unit speed geodesic that passes through $R_0$ and $R_1$, with $\gamma_1(0)$ defined to be the vertex point $0_Y$, and $\gamma_2(x)$ be the unit speed geodesic passing through $R_0$ and $R_2$, with $\gamma_2(0)$ also being $0_Y$.  We consider both of these geodesics to map the negative reals into $R_0$ and the positive reals into $R_1,R_2$ (respectively).

Now, consider the map $u:\Omega\to Y$ defined by setting $u(x,y)=\gamma_1(y)$ for $(x,y)\in I$, the left part of $\Omega$, and $u(x,y)=\gamma_2(y)$ for $(x,y)\in II$.  We then note that $u$ is a minimizing map from $\Omega$ into an $F$-connected complex, and $u$ is not a function of $y$ alone (because its values in the upper half of $\Omega$ very much depend on $x$).  However, $u$ is a function of $y$ alone on any subset of the lower half of $\Omega$.

This is not terribly surprising, because harmonic functions into $F$-connected complexes do not usually have unique extension properties (not even geodesics, as we see here, are uniquely extensible).  Fortunately, for our purposes it suffices to work only with convex domains.

We shall also need the following useful lemma, which, speaking roughly, is a compactness principle for maps into $F$-connected complexes.  Crucially, this lemma also ensures that if a sequence of maps $u_k$ all have points of high order $x_k$, the limit map also has a point of high order.

\begin{lemma}\label{compactness}
Suppose that $u_k:B_{64}(0)\to X$ are minimizing maps into an $F$-connected complex $X$ with $u_k(0)=0_X$ and so that \begin{enumerate} \item $I_{\phi,u_k}(0,64)$ and $H_{\phi,u_k}(0,64)$ are uniformly bounded (in $k$), and \item $H_{\phi,u_k}(0,64)$ is uniformly bounded away from $0$.
\end{enumerate}

Then there is a subsequence $u_\ell$ of the $u_k$ converging uniformly in $B_{16}(0)$ and strongly in $H^1(B_{16}(0))$ to a nonconstant minimizing map $u:B_{16}(0)\to X$.

Moreover, if we suppose that for each $u_k$, there exists some $x_k\in B_{16}(0)$ with $\text{Ord}_{u_k}(x_k)>1$, and $x_k\to x$, then $\text{Ord}_u(x)>1$ as well.  
\end{lemma}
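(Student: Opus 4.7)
The plan is to split the proof into two parts: (a) extracting a subsequence that converges (uniformly and in $H^1$) to a nonconstant minimizing limit $u$, and (b) transferring the high-order condition at $x_k$ to the limit point $x$.

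For (a), I would first convert the smoothed-quantity hypotheses into a uniform Lipschitz bound on $B_{16}(0)$. From $I_{\phi,u_k}(0,64)\le\Lambda$ and the height bound, the identity $rE_\phi=I_\phi H_\phi$ yields a uniform bound on $E_{\phi,u_k}(0,64)$. Since $\phi\equiv1$ on $[0,1/2]$, this dominates the unsmoothed energy $E_{u_k}(B_{32}(0))$. Applying Theorem \ref{GS2.3} on $B_{32}(0)$ then gives a uniform Lipschitz bound for $u_k$ on $B_{16}(0)$. Combined with $u_k(0)=0_X$, the images $u_k(B_{16}(0))$ lie in a fixed bounded (hence, by local compactness of $X$, relatively compact) set $K_0\subset X$. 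Arzel\`a--Ascoli then produces a subsequence converging uniformly on $\overline{B_{16}(0)}$ to a Lipschitz map $u$. Upgrading to strong $H^1$-convergence and showing that the uniform limit of minimizers is itself minimizing are the key analytic inputs here; I would invoke the corresponding compactness results for energy minimizers into NPC targets from \cite{KSSobolev} (alternatively, one compares $E_{u_k}$ with the energy of a $v_k$ constructed from any competitor $v$ for $u$ by homotoping through the retractions $R_{P,\lambda}$). Nonconstancy of $u$ follows at once: strong $H^1$-convergence gives $H_{\phi,u}(0,64)=\lim_k H_{\phi,u_k}(0,64)>0$, so $u\not\equiv0_X$, but $u(0)=\lim_k u_k(0)=0_X$, ruling out any constant limit.

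For (b), I would exploit the uniform high-order gap from Theorem \ref{thm:BigGS}(1). Since all images $u_k(B_{16}(0))$ sit in the single compact set $K_0$ produced in part (a), there is a single $\widehat{\epsilon}=\widehat{\epsilon}(K_0,m)>0$ with
\[
\mathrm{Ord}_{u_k}(x_k)\ge 1+\widehat{\epsilon}\qquad\text{for all }k.
\]
Monotonicity of $I_\phi$ (Proposition \ref{calculations}, identity (\ref{calc6})) and the identification of $\mathrm{Ord}$ with $\lim_{r\to 0}I_\phi(\cdot,r)$ at continuous points give $I_{\phi,u_k}(x_k,r)\ge 1+\widehat{\epsilon}$ for every $r>0$ small enough that $B_r(x_k)\subset B_{16}(0)$ for all large $k$. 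Strong $H^1$-convergence on $B_{16}(0)$, together with $x_k\to x$, implies $E_{\phi,u_k}(x_k,r)\to E_{\phi,u}(x,r)$ and $H_{\phi,u_k}(x_k,r)\to H_{\phi,u}(x,r)$ (the latter bounded away from zero because the limit map is nonconstant on any such ball, thanks to Proposition \ref{prop:GS3.4}). Passing to the limit yields $I_{\phi,u}(x,r)\ge 1+\widehat{\epsilon}$ for all admissible $r$, and letting $r\to 0$ gives $\mathrm{Ord}_u(x)\ge 1+\widehat{\epsilon}>1$.

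The technically hardest step is the $H^1$-strong compactness together with the fact that the limit is still energy minimizing; this is the standard NPC-target compactness theorem from \cite{KSSobolev} but must be invoked carefully since our targets are $F$-connected complexes rather than smooth manifolds. The second most delicate point is the uniform compactness of the image sets $u_k(B_{16}(0))\subset K_0$, which is what unlocks the single constant $\widehat{\epsilon}$ from Theorem \ref{thm:BigGS}(1). Once those are in place, the monotonicity formula and pointwise convergence of $I_\phi$ yield the high-order conclusion essentially mechanically.
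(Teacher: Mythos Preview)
Your outline for part (a) is essentially correct and matches the paper's approach, though two small corrections are needed. First, the limit $u$ is only defined on $B_{16}(0)$, so the quantity $H_{\phi,u}(0,64)$ that you invoke for nonconstancy is not defined; instead one must first transfer the lower bound on $H_{\phi,u_k}(0,64)$ to $H_{\phi,u_k}(0,16)$ via the identity (\ref{calc7}), and only then pass to the limit. Second, rather than citing a black-box compactness theorem from \cite{KSSobolev}, the paper carries out the ``no energy drop'' argument explicitly: it builds competitors $u_{\delta,k}$ by rescaling $u$ into $B_{16-\delta}$ and bridging to the boundary values of $u_k$ on the annulus $B_{16}\setminus B_{16-\delta}$ by constant-speed geodesics, using the NPC quadrilateral comparison to control the tangential energy. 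Your parenthetical alternative is in the same spirit.

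Part (b), however, has a genuine gap. You identify $\mathrm{Ord}_u(x)$ with $\lim_{r\to 0}I_\phi(x,r)$, but by Definition~\ref{def:smoothedfun} the smoothed height $H_\phi(x,r)$ is always measured with respect to the cone point $0_X$, not with respect to $u(x)$. If $u(x)\neq 0_X$ (and nothing prevents $u_k(x_k)\neq 0_X$ here), then for small $r$ one has $H_\phi(x,r)\sim c\,d^2(u(x),0_X)\,r^{m-1}$ while $E_\phi(x,r)=O(r^m)$, so in fact $I_\phi(x,r)\to 0$, not $\mathrm{Ord}_u(x)$. Thus the inequality $I_{\phi,u_k}(x_k,r)\ge 1+\widehat{\epsilon}$ that you assert simply fails in general. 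The paper avoids this by working instead with the \emph{unsmoothed} order $\mathrm{Ord}_v(x,r,P)=rE_v(x,r)/H_v(x,r,P)$, taking $P=u_k(x_k)$; one verifies directly from uniform and strong $H^1$ convergence that $\mathrm{Ord}_{u_k}(x_k,r,u_k(x_k))\to\mathrm{Ord}_u(x,r,u(x))$ for each fixed $r$, and then the monotonicity and gap arguments go through as you wrote. Replacing $I_\phi$ by $\mathrm{Ord}_u(\cdot,\cdot,u(\cdot))$ throughout your part (b) fixes the argument.
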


We remark that while Lemma \ref{compactness} concludes that the limit map $u$ has a point of high order, it does not claim that this point is singular, even if all of the $x_k$ are singular points of high order.  This weaker claim will be enough for our purposes due to part (4) of Theorem \ref{thm:BigGS}.  

\begin{proof}[Proof of Lemma \ref{extension}]
We begin by noting that on $B_r(x_0)$, $u(x)=\gamma(x_1)$ for some constant speed geodesic $\gamma$.  In particular, $\mathcal{S}(u)$, the singular set, does not intersect $B_r(x_0)$, and by part (3) of Theorem \ref{thm:BigGS} we have that $\mathcal{S}(u)$ is a relatively closed set of codimension $2$.  

For any regular point $y$, there is a path from $x_0$ to $y$ that does not intersect $\Si(u)$.  We cover this path by balls $B_i$ such that $u$ is given by an analytic function $\R^m\to\R^n$ on each of these balls (where $\R^n$ should be understood to denote one of the totally geodesic $n$-flats of $X$, which are not necessarily the same for all of the balls \textit{a priori}).  By compactness, we can cover this path with a finite number $B_r(x)=B_1,\dots,B_k$ of balls where we can assume that $B_i\cap B_{i+1}$ is nonempty and $y\in B_k$.  Inductively assuming that $u|_{B_i}$ is a function of $x_1$ , we observe that by unique extension, $u|_{B_{i+1}}$ is a function of $x_1$ alone as well.  Hence, we have that $u$ is {\em locally} a function of $x_1$ alone away from $\Si(u)$.

We wish to show that $u$ is constant on each affine $(m-1)$-plane of the form $P_x=x+\text{span}\{x_2,\dots,x_m\}\cap\Omega$, for $x$ on the line spanned by $x_1$.  By the convexity of $\Omega$, these planes $P_x$ are all connected.  Additionally, $u$ is continuous by Theorem \ref{GS2.3}.

For almost every $x$, $P_x\cap\Si(u)$ is of Hausdorff dimension less than $m-2$, for otherwise the full singular set would have Hausdorff dimension at least $m-1$.  (This is a simple consequence of, e.g. \cite{Fe}, result 2.10.25.)  
In particular, for almost every $x$, the set $P_x\cap\Si(u)$ is of topological (or inductive) dimension at most $m-3$, and therefore does not disconnect $P_x$.  

If $P_x\cap\Si(u)$ does not disconnect $P_x$, write $x\in G$.  The preceding paragraph demonstrates that $G$ has null complement.

For such $x\in G$, since $u$ is locally constant on an open, dense, connected subset of $P_x$, $u|_{P_x}$ is constant.

For $x\notin G$, we note that $x$ is a limit point of $G$, because $x$ lies in a set of measure zero.  Let $x_n$ be a sequence in $G$ converging to $x$, and note that $u|_{P_{x_n}}$ is constant.  Moreover, because $u$ is continuous, $u|_{P_x}$ is the limit of the $u|_{P_{x_n}}$ and hence is also a constant function.

Thus, $u$ is a function of $x_1$ alone on all of $\Omega$.

\end{proof}

\begin{proof}[Proof of Lemma \ref{compactness}]
First, we note that since $I_{\phi,u_k}(0,64)$ and $H_{\phi,u_k}(0,64)$ are both uniformly bounded, the smoothed energies $E_{\phi,u_k}(0,64)$ are likewise uniformly bounded.  We then immediately see that the unsmoothed energies $\int_{B_{32}(0)}|\nabla u_k|^2$ are also uniformly bounded, although we must shrink the size of the ball to obtain this.  By Theorem \ref{GS2.3}, we then obtain a uniform Lipschitz bound on the $u_k$ on $B_{16}(0)$, shrinking the ball again so that we are in the interior of the region where we have a uniform bound on the total energy..  We then apply Arzela-Ascoli to conclude that a subsequence $u_k$ (unrelabeled) converges uniformly to some $u:B_{16}(0)\to X$.  It is clear that $u(0)=0_X$ because $u_k(0)=0_X$ for all $k$.

By uniform convergence, we see that \[\int_{B_{16}(0)}|u|^2=\lim_{k\to\infty}\int_{B_{16}(0)}|u_k|^2.\] Moreover, the lower semicontinuity of energy tells us that
\[
E(u):=\int_{B_{16}(0)}|\nabla u|^2\leq\liminf_{k\to\infty}\int_{B_{16}(0)}|\nabla u_k|^2=:\liminf_{k\to\infty}E(u_k)
\]
where we can of course choose a minimizing subsequence to replace the $\liminf$ above with a limit.  We seek to show that equality holds above, that is, that there is no ``energy drop."

We shall do this by contradiction.  Assuming that the energy of $u$ is strictly less than the limit of the $u_k$, we directly find competitors to $u_k$ which (for sufficiently large $k$) have less energy than $u_k$, contradicting the minimality of $u_k$.  In particular, suppose that \begin{equation}\label{eq:EneDro}E(u)+\epsilon<\liminf_{k\to\infty}E(u_k)\end{equation} for some $\epsilon>0$.

For $\delta>0$ (which we shall treat as undetermined for now and fix later), we define a new map $u_{\delta,k}:B_{16}(0)\to X$.

First, for $\omega\in\partial B_{16}(0)$, we define the geodesic $\gamma_{k,\omega}:[16-\delta,16]\to X$ which connects $u(\omega)$ to $u_k(\omega)$ (Observe that the domain of $\gamma_{k,\omega}$ is unusual).  We then define
\[
u_{\delta,k}(x)=\begin{cases}u\Big(\frac{16x}{16-\delta}\Big)&\text{on }B_{16-\delta}(0)\\
\gamma_{k,(16x/|x|)}(|x|)&\text{on }B_{16}(0)\setminus B_{16-\delta}(0).

\end{cases}
\]

On $\partial B_{16-\delta}(0)$, $u_{\delta,k}$ has values given by the boundary values of $u$ on $\partial B_{16}(0)$, and the map is simply a rescaling of $u$ to this smaller ball.  The energy of $u_{\delta,k}|_{B_{16-\delta}(0)}$ is hence $\big(\frac{16-\delta}{16}\big)^{m-2}E(u)$.

By definition, on the radial segment from $\frac{16-\delta}{16}\omega$ to $\omega\in\partial B_{16}(0)$, $u_{\delta,k}$ traces out the geodesic from $u(\omega)$ to $u_k(\omega)$, at constant speed.

We note, moreover, that if $x,y\in B_{16}(0)$, $r\in\big[\frac{16-\delta}{16},1\big]$, we have (by quadrilateral comparisons as in Korevaar-Schoen \cite{KSSobolev}, which come from repeatedly applying the nonpositive curvature condition)
that
\[
|u_{\delta,k}(rx)-u_{\delta,k}(ry)|\leq|u(x)-u(y)|+|u_k(x)-u_k(y)|.
\]

Fixing $r$, dividing by $|rx-ry|$, and taking the limit as $x\to y$ along the great circle between them, we see that
\[
|\partial_vu_{\delta,k}(y)|\leq r^{-1}|\partial_{v}u(y)|+r^{-1}|\partial_{v}u_k(y)|
\]
where $v$ denotes the unit vector tangent to the great circle connecting $x$ and $y$.  Note that by changing our choice of $x$, we can take this $v$ to be any vector tangential to $B_r(0)$ at $y$.  The Lipschitz bounds on $u_k$ and $u$, together with the uniform lower bounds on $r$ (as long as we take $\delta<16$), imply that $|\partial_vu_{\delta,k}(y)|$ is uniformly bounded.  In particular, the tangential energy densities of $u_{\delta,k}$ are bounded pointwise on the annulus $B_{16}(0)\setminus B_{16-\delta}(0)$.  By taking $\delta$ sufficiently small, we can hence take the total tangential energy in this annulus to be less than $\frac{\epsilon}{2}$ for all $k$.


If $m$ is exactly $1$, we also, at this stage, choose $\delta$ to be small enough that $\big(\frac{16-\delta}{16}\big)^{-1}E(u)$ is at most $E(u)+\frac{\epsilon}{4}$.  For other spatial dimensions, the rescaling of $u$ does not increase the energy, as in these cases $m-2\geq0$.  In any event, we now fix $\delta$ subject to these constraints and hold it constant for the rest of the proof.

We now observe immediately that, on $B_{16}(0)\setminus B_{16-\delta}(0)$,
\[
\Big|\frac{\partial}{\partial r}u_{\delta,k}\Big|\leq\delta^{-1}\|u-u_k\|_{L^\infty(\partial B_{16}(0))}.
\]Since we have fixed $\delta$, this is some constant times $\|u-u_k\|_{L^\infty(\partial B_{16}(0))}$.  As we take $k\to\infty$, since the $u_k$ converge uniformly to $u$ on $B_{16}(0)$, the radial energy of $u_{\delta,k}$ on $B_{16}(0)\setminus B_{16-\delta}(0)$ approaches $0$.

In particular, for sufficiently large $k$, the energy of $u_{\delta,k}$ is strictly less than the energy of $u_k$, contradicting the fact that $u_k$ is an energy minimizer.

In fact, we can use this same construction to argue that $u$ itself is an energy minimizer, for if it is not, by replacing $u$ in the above construction with the energy minimizer for the boundary values of $u$, we reach the same contradiction by the same method.

To see that $u$ is nonconstant, we note that by (\ref{calc7}) we have that
\[
16^{1-m}H_{\phi,u_k}(0,16)=64^{1-m}H_{\phi,u_k}(0,64)\exp\bigg(-2\int_{16}^{64}I_{\phi,u_k}(x,t)\frac{dt}{t}\bigg).
\]
so that the uniform bounds from above on $I_{\phi,u_k}(0,64)$, along with the fact that $H_{\phi,u_k}(0,64)$ is uniformly bounded from below away from $0$, imply that $H_{\phi,u_k}(0,16)$ is uniformly bounded from below, away from $0$.  Hence, $H_{\phi,u}(0,16)>0$, so that since $u$ is continuous and $u(0)=0_X$, we see that $u$ must be nonconstant.  Proposition \ref{prop:GS3.4} then allows us to conclude that $u$ is nonconstant on any nonempty open subset of $B_{16}(0)$.

We now wish to show that this convergence is strong in $H^1(B_{16}(0))$.  We recall that, following Gromov-Schoen \cite{GS}, we have identified our complex $X$ as a closed subset of $\R^N$ for some $N$.  

We have uniform $H^1(B_{16}(0),\R^N)$ bounds on the $u_k$, since the $L^2$ bounds are implied by the bounds on $H_\phi$ and the monotonicity of $H_\phi$; the $L^2$ bounds on the derivative come from the energy.  Hence, we can take the $\{u_k\}$ to converge weakly to some $v\in H^1$ up to taking a subsequence.  Since $L^2$ convergence implies pointwise almost everywhere convergence up to a subsequence, we see that $v$ and $u$ must in fact agree.  Hence, the $u_k$ converge weakly to $u$ in $H^1(B_{16}(0))$, up to a subsequence.

However, we also have that \[\lim_{k\to\infty}\|u_k\|_{H^1}=\|u\|_{H^1}.\]  Here, the zeroth-order part of this norm converges by the uniform convergence of the $u_k$ to $u$, and the first-order convergence is simply the aforementioned convergence of the energies (i.e. the fact that we do not have an ``energy drop").  Then, since we have weak convergence which respects the norm of $H^1$, we see that this subsequence of the $u_k$ converges strongly to $u$ in $H^1(B_{16}(0),\R^N)$.

Hence, the convergence of the $u_k$ to $u$ is both uniform and strong in $H^1(B_{16}(0))$.  

We now wish to see that, if all of the $u_k$ have a point of high order, then the limit map $u$ has a point of high order as well.  In particular, we consider the unsmoothed order (cf. Definition \ref{def:order})
\[
\text{Ord}_v(x,r,P):=\frac{rE_v(x,r)}{H_v(x,r,P)}.
\]

If $x_k\to x$ and $P_k\to P$, we have that
\begin{equation}\label{OrdCont}
\lim_{k\to\infty}\text{Ord}_{u_k}(x_k,r,P_k)=\text{Ord}_u(x,r,P)
\end{equation}
due to the uniform and strong $H^1$ convergence, using standard arguments.  We recall (\ref{def:ctsord})\begin{equation*}\text{Ord}_u(x)=\lim_{r\to0}\text{Ord}_u(x,r,u(x)).\end{equation*}

The uniform Lipschitz bounds on $u,u_k$ imply that both $u(B_{16}(0))$ and $u_k(B_{16}(0))$ lie in some bounded (and hence compact) $K\subset X$.  Hence, by part (1) of Theorem \ref{thm:BigGS}, there is some $\epsilon_X>0$ so that for all $x$ in $B_{16}(0)$, either $\text{Ord}_v(x)=1$ or $\text{Ord}_v(x)\geq1+\epsilon_X$ where $v$ may denote any minimizing map whose image lies in $K$.  Since we have assumed that each $u_k$ has at least one point of order strictly greater than $1$, we select for each $u_k$ an $x_k$ with $\text{Ord}_{u_k}(x_k)\geq1+\epsilon_X$.  We assume that these $x_k$ converge to some $x\in B_{16}(0)$, by passing to a subsequence (unrelabeled).

Thus, using (\ref{def:ctsord}) and the fact that the order is nondecreasing as $r$ increases, we have that for each $r>0$ and for each $k$
\[
1+\epsilon_X\leq\text{Ord}_{u_k}(x_k)\leq\text{Ord}_{u_k}(x_k,r,u_k(x_k)).
\]
We therefore have, taking a limit in $k$ and using (\ref{OrdCont}), that for each $r>0$,
\[
1+\epsilon_X\leq\text{Ord}_u(x,r,u(x))
\]
because the $u_k$ converge uniformly to $u$ and hence $u_k(x_k)$ converges to $u(x)$.  Taking the limit in $r$ then shows that $x$ is a point of order strictly greater than $1$ for $u$, which is what we wished to find.
\end{proof}

\begin{proof}[Proof of Proposition \ref{flatbound}]
The scale-invariance of the mean flatness allows us to assume that $r=1$ and $H_\phi(0,1)=1$.  Let $\overline{x}$ be the barycenter of $\mu$ in $B_{1/8}(x_0)$, and let $\{v_1,\dots,v_m\}$ diagonalize the bilinear form $b$ associated to $\mu$, with corresponding eigenvalues $0\leq\lambda_m\leq\lambda_{m-1}\leq\dots\leq\lambda_1$.  Then, the definition of the barycenter and (\ref{eigeneq}) let us conclude that, for each $v_j$ and every $z\in\Omega$,
\[\label{flatboundeq1}
 \nabla u(z)\circ(-\lambda_jv_j)=\int_{B_{1/8}(x_0)}((x-\overline{x})\cdot v_j)( \nabla u(z)\circ(z-x)-\alpha u(z))\phantom{i}d\mu(x)
\]
for any constant $\alpha$; we shall choose $\alpha$ later in the proof.  (It is helpful to compare the right-hand side of this equation to Proposition \ref{prop4.3}, for motivation.)

Squaring this equation and using, again, (\ref{eigeneq}) and the definition of the barycenter, we see that
\begin{align*}
\lambda_j^2|\partial_{v_j}u|^2&\leq\bigg(\int_{B_{1/8}(x_0)}|(x-\overline{x})\cdot v_j|\phantom{i}| \nabla u(z)\circ(z-x)-\alpha u(z)|\phantom{i}d\mu(x)\bigg)^2\\
&\leq\int_{B_{1/8}(x_0)}((x-\overline{x})\cdot v_j)^2\phantom{i}d\mu(x)\int_{B_{1/8}(x_0)}|\nabla u(z)\circ(z-x)-\alpha u(z)|^2\phantom{i}d\mu(x)\\
&=\lambda_j\int_{B_{1/8}(x_0)}|\nabla u(z)\circ(z-x)-\alpha u(z)|^2\phantom{i}d\mu(x).
\end{align*}

Thus,
\[
\lambda_j|\partial_{v_j}u|^2\leq\int_{B_{1/8}(x_0)}|\nabla u(z)\circ(z-x)-\alpha u(z)|^2\phantom{i}d\mu(x)
\]
where $z\in\Omega$ and $\alpha\in\R$ are (at present) arbitrary.

Recalling the earlier characterization of the $(m-2)^{\text{th}}$ mean flatness, we sum from $j=1$ to $m-1$, and integrate over $z\in B_{5/4}(x_0)\setminus B_{3/4}(x_0)$ to conclude that
\begin{align*}
D_\mu^{m-2}(x_0,1/8)&\int_{B_{5/4}(x_0)\setminus B_{3/4}(x_0)}\sum_{j=1}^{m-1}|\partial_{v_j}u(z)|^2\phantom{i}dz\\
&=\bigg(\frac{1}{8}\bigg)^{-m }\int_{B_{5/4}(x_0)\setminus B_{3/4}(x_0)}(\lambda_{m-1}+\lambda_m)\sum_{j=1}^{m-1}|\partial_{v_j}u(z)|^2\phantom{i}dz\\&\leq C\int_{B_{5/4}(x_0)\setminus B_{3/4}(x_0)}\lambda_{m-1}\sum_{j=1}^{m-1}|\partial_{v_j}u(z)|^2\phantom{i}dz\\
&\leq C\int_{B_{5/4}(x_0)\setminus B_{3/4}(x_0)}\sum_{j=1}^{m-1}\lambda_j|\partial_{v_j}u(z)|^2\phantom{i}dz\\
&\leq C\int_{B_{5/4}(x_0)\setminus B_{3/4}(x_0)}\int_{B_{1/8}(x_0)}|\nabla u(z)\circ(z-x)-\alpha u(z)|^2\phantom{i}d\mu(x)\phantom{i}dz\\
&\leq C\int_{B_{1/8}(x_0)}\int_{B_{3/2}(x)\setminus B_{1/2}(x)}|\nabla u(z)\circ(z-x)-\alpha u(z)|^2\phantom{i}dz\phantom{i}d\mu(x).\\
\end{align*}

Now, we claim that
\begin{equation}\label{flatboundclaim}
\int_{B_{5/4}(x_0)\setminus B_{3/4}(x_0)}\sum_{j=1}^{m-1}|\partial_{v_j}u(z)|^2\phantom{i}dz\geq c(\Lambda)>0
\end{equation}
so that
\[
D_\mu^{m-2}(x_0,1/8)\leq C\int_{B_{1/8}(x_0)}\int_{B_{3/2}(x)\setminus B_{1/2}(x)}|\nabla u(z)\circ(z-x)-\alpha u(z)|^2\phantom{i}dz\phantom{i}d\mu(x).
\]

Suppose that (\ref{flatboundclaim}) failed.  We could then find a sequence of minimizing maps $u_k$ satisfying Assumptions \ref{cone} and \ref{bound}, so that each $u_k$ had a point of high order $x\in B_{1/8}(x_0)\cap\A$ and so that
\[
\int_{B_{5/4}(x_0)\setminus B_{3/4}(x_0)}\sum_{j=1}^{m-1}|\partial_{v_j}u_k(z)|^2\phantom{i}dz\leq\frac{1}{k}.
\]
We take these maps to have $H_{\phi,u_k}(0,1)=1$, which implies a uniform upper bound on $H_{\phi,u_k}(0,64)$ using (\ref{calc7}), and also implies that $H_{\phi,u_k}(0,64)\geq1$.

Using Lemma \ref{compactness} we see that, up to a subsequence, these $u_k$ converge to a nonconstant minimizing map $u:B_{16}(0)\to X$ so that
\[
\int_{B_{5/4}(x_0)\setminus B_{3/4}(x_0)}\sum_{j=1}^{m-1}|\partial_{v_j}u(z)|^2\phantom{i}dz=0.
\]
We note that this $u$ is therefore a function of one variable only on any ball contained within the annular region $B_{5/4}(x_0)\setminus B_{3/4}(x_0)$.  Applying Lemma \ref{extension}, $u$ must in fact be a function of one variable alone on its whole domain.

However, again by Lemma \ref{compactness}, $u$ has a point $x$ having $\text{Ord}_u(x)>1$, and since $u$ depends only on one variable, this means that there is in fact an $(m-1)$ dimensional affine subspace of such points, contradicting part (4) of Theorem \ref{thm:BigGS}.

Hence, (\ref{flatboundclaim}) holds.  We note that while this inequality is given in terms of $x_0$ and the direction vectors $v_1,\dots,v_j$, the constant $c(\Lambda)$ does not depend on these.  To see this, simply apply an isometry of $\R^n$ translating $x_0$ to any other $y_0$ and taking the orthonormal vectors $v_j$ to any desired set of orthonormal vectors.

Thus, we have that
\[
D_\mu^{m-2}(x_0,1/8)\leq C\int_{B_{1/8}(x_0)}\int_{B_{3/2}(x)\setminus B_{1/2}(x)}|\nabla u(z)\circ(z-x)-\alpha u(z)|^2\phantom{i}dz\phantom{i}d\mu(x),
\]
and using the triangle inequality we conclude that
\begin{align}\label{5.4goal}
\notag D_\mu^{m-2}(x_0,1/8)\leq C&\underbrace{\int_{B_{1/8}(x_0)}\int_{B_{3/2}(x)\setminus B_{1/2}(x)}|\nabla u(z)\circ(z-x)-I_\phi(x,1)u(z)|^2\phantom{i}dz\phantom{i}d\mu(x)}_{=:(I)}\\
&+C\underbrace{\int_{B_{1/8}(x_0)}\int_{B_{3/2}(x)\setminus B_{1/2}(x)}\big(I_\phi(x,1)-\alpha\big)^2|u(z)|^2\phantom{i}dz\phantom{i}d\mu(x)}_{=:(II)}.
\end{align}

We now choose $\alpha$ with the intent of controlling $(II)$.  In particular, we set
\[
\alpha=\frac{1}{\mu(B_{1/8}(x_0))}\int_{B_{1/8}(x_0)}I_\phi(y,1)\phantom{i}d\mu(y).
\]
This allows us to bound $(II)$ by first evaluating the inner integral in $z$,
\[
\int_{B_{3/2}(x)\setminus B_{1/2}(x)}|u(z)|^2dz,
\]
which we can bound by $H_\phi(x,1)+H_\phi(x,2)$, and this we can bound in terms of $H_\phi(0,1)$ using (\ref{3.2hbound}) and (\ref{calc7}).
\begin{align*}
(II)&\leq CH_\phi(0,1)\int_{B_{1/8}(x_0)}\bigg(I_\phi(x,1)-\frac{1}{\mu(B_{1/8}(x_0))}\int_{B_{1/8}(x_0)}I_\phi(y,1)\phantom{i}d\mu(y)\bigg)^2\phantom{i}d\mu(x)\\
&=C\int_{B_{1/8}(x_0)}\bigg(\frac{1}{\mu(B_{1/8}(x_0))}\int_{B_{1/8}(x_0)}(I_\phi(x,1)-I_\phi(y,1))d\mu(y)\bigg)^2d\mu(x)\\
&\leq\frac{C}{\mu(B_{1/8}(x_0))}\int_{B_{1/8}(x_0)}\int_{B_{1/8}(x_0)}(I_\phi(x,1)-I_\phi(y,1))^2\phantom{i}d\mu(y)\phantom{i}d\mu(x).
\end{align*}
Hence, applying Theorem \ref{thm4.2} we see that
\begin{align}\label{5.4b1}
\notag(II)&\leq\frac{C}{\mu(B_{1/8}(x_0))}\int_{B_{1/8}(x_0)}\int_{B_{1/8}(x_0)}\big(W_{1/8}^4(x)+W_{1/8}^4(y)\big)\phantom{i}d\mu(y)\phantom{i}d\mu(x)\\
&=2C\int_{B_{1/8}(x_0)}W_{1/8}^4(x)\phantom{i}d\mu(x).
\end{align}

We again use the triangle inequality to bound $(I)$,
\begin{align*}
(I)\leq &C\underbrace{\int_{B_{1/8}(x_0)}\int_{B_{3/2}(x)\setminus B_{1/2}(x)}\big|I_\phi(x,1)-I_\phi(x,|z-x|)\big|^2|u(z)|^2\phantom{i}dz\phantom{i}d\mu(x)}_{=:(I_1)}\\
&+C\underbrace{\int_{B_{1/8}(x_0)}\int_{B_{3/2}(x)\setminus B_{1/2}(x)}|\nabla u(z)\circ(z-x)-I_\phi(x,|z-x|)u(z)|^2\phantom{i}dz\phantom{i}d\mu(x)}_{=:(I_2)}.
\end{align*}
For the first integral appearing here, we note that by construction, for each pair of $x,z$ in the domain of integration, $\frac{1}{2}\leq|z-x|\leq\frac{3}{2}$ and hence for each such pair,
\[
|I_\phi(x,|z-x|)-I_\phi(x,1)|\leq I_\phi(x,3/2)-I_\phi(x,1/2)\leq W_{1/8}^4(x).
\]
In particular, this means that (bounding the integral in $z$ as in the previous work we used to bound $(II)$),
\begin{equation}\label{5.4b2}
(I_1)\leq CH_\phi(0,1)\int_{B_{1/8}(x_0)}W_{1/8}^4(x)^2\phantom{i}d\mu(x)\leq C\int_{B_{1/8}(x_0)}W_{1/8}^4(x)\phantom{i}d\mu(x).
\end{equation}

As for $(I_2)$, we apply Proposition \ref{prop4.3} directly, concluding that
\[
\int_{B_{3/2}(x)\setminus B_{1/2}(x)}|\nabla u(z)\circ(z-x)-I_\phi(x,|z-x|)u(z)|^2\phantom{i}dz\leq CW_{1/8}^4(x).
\]
Thus,
\begin{equation}\label{5.4b3}
(I_2)\leq C\int_{B_{1/8}(x_0)}W_{1/8}^4(x)\phantom{i}d\mu(x),
\end{equation}

Combining (\ref{5.4goal}) with (\ref{5.4b1}), (\ref{5.4b2}), and (\ref{5.4b3}) we conclude the desired result.
\end{proof}

\section{Qualitative Comparison to Homogeneous Maps}\label{Qualitative6}
We next show that if a minimizing function $u$ has sufficiently small pinching at points which are spread out enough, then $\A$, and in particular $\Siu$, lies close to the span of those points.  

To motivate this fact, consider the rigid case when $u:\R^m\to X$ is homogeneous about points spanning an $m-2$ dimensional subspace $V$ (i.e. the pinching is $0$ at these points).  In this case, $u$ cannot have any points of order greater than $1$ outside of $V$, for if $x$ were such a point, then for any point $y$ on any line from $x$ to $V$, $y$ would also have high order, and we would have that $\A$ had Hausdorff dimension at least $(m-1)$.  This, of course, contradicts Theorem \ref{thm:BigGS}.

First, let us make the notion of points being ``spread out enough" more precise.

\begin{definition}
We say that a set of points $\{x_i\}_{i=0}^k\subset B_r(x)$ is {\bf $\rho r$-linearly independent} if for each $i=1,2,\dots,k$,
\begin{equation}
d(x_i,x_0+\text{span}\{x_{i-1}-x_0,\dots,x_1-x_0\})\geq\rho r.
\end{equation}
\end{definition}
\begin{definition}
We say that a set $F\subset B_r(x)$ {\bf $\rho r$-spans} a $k$-dimensional affine subspace $V$ if there is a $\rho r$-linearly independent set $\{x_i\}_{i=0}^k\subset F$ so that $V=x_0+\text{span}\{x_i-x_0\}$.
\end{definition}

We also note, at this point, that if $F\cap B_r(x)$ fails to $\rho r$-span any $k$-dimensional affine subspaces, then it lies in $B_{\rho r}(L)$ for some $(k-1)$-dimensional affine subspace $L$.  Indeed, if we take $\kappa$ to be the maximum number so that $F$ has a $\rho r$-linearly independent set of size $\kappa$, we can take the span of any such set and call it $L$.  Then we must have that $\kappa<k$, but moreover $F\subset B_{\rho r}(L)$, for if $y\in F\setminus B_{\rho r}(L)$, by the definition of $\rho r$-span, the set $\{x_0,\dots,x_\kappa,y\}$ $\rho r$-spans a $(\kappa+1)$-dimensional affine subspace.

\begin{lemma}\label{lemma6.4}
Suppose that $u$ satisfies Assumptions \ref{cone} and \ref{bound}, and let $\rho_1,\rho_2,\rho_3$ be given.  Then there is an $\epsilon=\epsilon(m,X,\Lambda,\rho_1,\rho_2,\rho_3)>0$ such that the following statement holds.

If $\{x_i\}_{i=0}^{m-2}\subset B_1(0)$ is a $\rho_1$-linearly independent set with
\[
W_{\rho_2}^2(x_i)=I_\phi(x_i,2)-I_\phi(x_i,\rho_2)<\epsilon
\]
for each $i$, then
\[
\A\cap(B_1(0)\setminus B_{\rho_3}(V))=\varnothing,
\]
where $V$ is the affine span of $\{x_i\}_{i=0}^{m-2}$, $V=x_0+\text{span}\{x_i-x_0:1\leq i\leq m-2\}$.
\end{lemma}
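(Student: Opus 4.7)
The plan is to argue by contradiction and compactness, reducing to the rigid case. Suppose the lemma fails: then for fixed $\rho_1,\rho_2,\rho_3>0$, we produce a sequence of minimizing maps $u_k$ satisfying Assumptions \ref{cone} and \ref{bound} together with $\rho_1$-linearly independent sets $\{x_i^k\}_{i=0}^{m-2}\subset\overline{B_1(0)}$ with $W_{\rho_2}^2(x_i^k)<1/k$ for each $i$, and with points $y_k\in\A(u_k)\cap(B_1(0)\setminus B_{\rho_3}(V_k))$, where $V_k$ is the affine span of the $x_i^k$. After normalizing so that the hypotheses of Lemma \ref{compactness} hold, a subsequence of $u_k$ converges uniformly and in $H^1(B_{16}(0))$ to a nonconstant minimizing map $u$. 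Simultaneously extracting, we obtain limits $x_i^k\to x_i$ and $y_k\to y$; the set $\{x_i\}$ is still $\rho_1$-linearly independent (hence spans an $(m-2)$-plane $V$), $\mathrm{dist}(y,V)\geq\rho_3$, and Lemma \ref{compactness} guarantees $y\in\A(u)$.

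The strong $H^1$ convergence, together with the integral identities of Proposition \ref{calculations} and the lower bound on $H_\phi$ from our normalization, implies that $I_\phi(x,r)$ is continuous in $(u,x,r)$ on the relevant range. Passing to the limit in the pinching inequality yields $I_{\phi,u}(x_i,\rho_2)=I_{\phi,u}(x_i,2)$ for each $i$, so by Lemma \ref{homchar}, $u$ is conically homogeneous about each $x_i$ on $B_2(x_i)$. Applying Lemma \ref{twohomogeneous} to the pairs $(x_0,x_i)$ shows that $u$ is invariant under translation by each $x_i-x_0$ on a neighborhood of the relevant line segments; Lemma \ref{extension} then propagates this translation invariance to the full convex domain $B_{16}(0)$. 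Since the vectors $\{x_i-x_0\}_{i=1}^{m-2}$ span the $(m-2)$-dimensional direction $V_0$ of $V$, the limit $u$ is invariant under the full $(m-2)$-parameter translation group along $V_0$.

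Choosing coordinates so that $V_0$ is spanned by the last $m-2$ axes, $u(z)=\tilde u(z_1,z_2)$ for a nonconstant minimizing map $\tilde u:B_{16}(0)\cap\R^2\to X$, and $\tilde u$ is conically homogeneous about the projection $\tilde x_0$ of $x_0$. The projection $\tilde y$ of $y$ satisfies $|\tilde y-\tilde x_0|\geq\rho_3>0$, and the product structure of $u$ together with the definition of order (following the lines of equation (\ref{OrdCont})) gives $\mathrm{Ord}_{\tilde u}(\tilde y)=\mathrm{Ord}_u(y)>1$, so $\tilde y\in\A(\tilde u)$. But conical homogeneity of $\tilde u$ about $\tilde x_0$ forces $\mathrm{Ord}_{\tilde u}$ to be invariant along rays from $\tilde x_0$ (the rescaled map $\tilde u_t(x):=t^{-\alpha}\tilde u(\tilde x_0+t(x-\tilde x_0))$ coincides with $\tilde u$), hence the entire open ray from $\tilde x_0$ through $\tilde y$ lies in $\A(\tilde u)$. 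This set has Hausdorff dimension $1$ in the $2$-dimensional domain of $\tilde u$, contradicting part (4) of Theorem \ref{thm:BigGS}.

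The main obstacle is bookkeeping around the rigid limit: verifying that $I_\phi$ really is continuous at fixed radii under $H^1$-convergence (the lower bound on $H_\phi$ at the limit point is needed), carefully combining Lemma \ref{twohomogeneous} with Lemma \ref{extension} to upgrade pairwise translation invariance into the full $(m-2)$-parameter invariance on all of $B_{16}(0)$, and identifying the order of the limit $u$ at $y$ with that of the two-variable factor $\tilde u$ at $\tilde y$. Once the limiting map $u$ is genuinely a translation-invariant homogeneous map, the contradiction with the dimension bound for $\A(\tilde u)$ in $\R^2$ is essentially immediate.
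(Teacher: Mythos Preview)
Your compactness--contradiction framework matches the paper's exactly: contradicting sequence, normalization, Lemma \ref{compactness} to extract a nonconstant minimizing limit with a high-order point $y$, convergence of $I_\phi$ to force conical homogeneity of the limit about each $x_i$ via Lemma \ref{homchar}, and a final contradiction with the codimension-$2$ bound for $\A$. The difference lies only in how the contradiction is extracted from the rigid limit.

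The paper does not reduce to a two-variable factor. It uses only Lemma \ref{orderconstancy}: since $u$ is homogeneous about each $x_i$, the order is constant along rays from each $x_i$. The ray from $x_0$ to $y$ therefore consists entirely of high-order points; taking rays from $x_1$ through this segment yields a two-dimensional triangle of high-order points; iterating through $x_2,\dots,x_{m-2}$ produces an $(m-1)$-dimensional simplex inside $\A(u)$, contradicting Theorem \ref{thm:BigGS}(4) directly in the $m$-dimensional domain.

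Your route through Lemma \ref{twohomogeneous} and the two-dimensional reduction is correct in principle but incurs precisely the bookkeeping you flag. Two remarks: first, Lemma \ref{extension} as stated concerns maps depending on a \emph{single} variable, so it does not literally propagate invariance in $m-2$ directions; fortunately Lemma \ref{twohomogeneous} already yields invariance on $B_2(x_0)\cap B_2(x_i)\supset B_1(0)\ni y$, so the propagation step is unnecessary. Second, invoking Theorem \ref{thm:BigGS}(4) for $\tilde u$ requires checking that $\tilde u$ is itself minimizing and that $\mathrm{Ord}_{\tilde u}(\tilde y)=\mathrm{Ord}_u(y)$; both hold but need argument. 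The paper's iterated-ray construction sidesteps all of this with a single elementary lemma.
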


The same hypotheses let us conclude that $I_\phi(x,r)$ is almost constant on $V$, as long as $r$ is not near $0$.  (In \cite{DMSV} more precise bounds are stated, which we will not need; such bounds should follow from computations similar to those in Theorem \ref{thm4.2}.)

\begin{lemma}\label{lemma6.5}
Suppose that $u$ satisfies Assumptions \ref{cone} and \ref{bound}, and let $\delta>0$ be given.  There exists $\epsilon=\epsilon(m,X,\Lambda,\rho_1,\rho_2,\rho_3,\delta)>0$ such that the following statement holds.

If $\{x_i\}_{i=0}^{m-2}\subset B_1(0)$ is a $\rho_1$-linearly independent set with
\[
W_{\rho_2}^2(x_i)=I_\phi(x_i,2)-I_\phi(x_i,\rho_2)<\epsilon
\]
then for $V=x_0+\text{span}\{x_i-x_0:1\leq i\leq m-2\}$, for all $y_1,y_2\in B_1(0)\cap V$, and for all $r_1,r_2\in[\rho_3,1]$,
\[
\big|I_\phi(y_1,r_1)-I_\phi(y_2,r_2)\big|\leq\delta.
\]
\end{lemma}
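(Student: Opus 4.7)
The plan is a compactness/contradiction argument, mirroring the approach I would use for Lemma \ref{lemma6.4}. Suppose the statement fails for some $\delta > 0$; then for each $k$ there is a minimizing map $u_k$ satisfying Assumptions \ref{cone} and \ref{bound}, a $\rho_1$-linearly independent set $\{x_i^k\}_{i=0}^{m-2} \subset B_1(0)$ with $W_{\rho_2, u_k}^2(x_i^k) < 1/k$, points $y_1^k, y_2^k$ in the affine span $V_k$ of $\{x_i^k\}$ intersected with $B_1(0)$, and radii $r_j^k \in [\rho_3, 1]$ for which
\[
|I_{\phi, u_k}(y_1^k, r_1^k) - I_{\phi, u_k}(y_2^k, r_2^k)| \geq \delta.
\]
Rescaling the $u_k$ using the cone structure of $X$ so that $H_{\phi, u_k}(0, 64) = 1$ (which leaves $I_\phi$ and the pinching invariant), the hypotheses of Lemma \ref{compactness} are met. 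Along a subsequence, $u_k \to u$ uniformly and strongly in $H^1(B_{16}(0))$, with $u$ a nonconstant minimizing map; after a further subsequence, $x_i^k \to x_i$, $y_j^k \to y_j$, $r_j^k \to r_j$, with all limits in the appropriate closed sets. Since $\rho_1$-linear independence is a closed condition, $\{x_i\}$ still $\rho_1$-spans an $(m-2)$-dimensional affine subspace $V$ containing $y_1, y_2$.

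The nonconstancy of $u$ together with Proposition \ref{prop:GS3.4} and the monotonicity statements of Proposition \ref{calculations} keep $H_{\phi, u}$ bounded below on the relevant range of radii, so $E_\phi$, $H_\phi$, and $I_\phi$ pass continuously to the limit under the strong $H^1$ and uniform convergence. Consequently, $W_{\rho_2, u}^2(x_i) = 0$ for every $i$, while $|I_{\phi, u}(y_1, r_1) - I_{\phi, u}(y_2, r_2)| \geq \delta$ survives. By Lemma \ref{homchar}, the limit $u$ is conically homogeneous about each $x_i$ on $B_2(x_i)$, of some degree $\alpha_i$.

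I would then apply the cone-splitting principle (Lemma \ref{twohomogeneous}, foreshadowed just after Theorem \ref{thm4.2}) to conclude that $u$ is translation-invariant along each line through $x_0$ and $x_i$. Iterating over the $m-2$ linearly independent directions $x_i - x_0$ yields translation invariance along the entire affine subspace $V$, forcing the $\alpha_i$ to collapse to a common value $\alpha$. For $y \in V \cap B_1(0)$ and $r \leq 1$, the translation invariance $u(y + w) = u(x_0 + w)$ gives $I_{\phi, u}(y, r) = I_{\phi, u}(x_0, r)$, while the homogeneity about $x_0$ gives $I_{\phi, u}(x_0, r) \equiv \alpha$ for $r \in (0, 2]$. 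Therefore $I_{\phi, u}(y_1, r_1) = I_{\phi, u}(y_2, r_2) = \alpha$, contradicting the preserved $\geq \delta$ gap.

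The main anticipated obstacle is the cone-splitting iteration: propagating the invariance cleanly from the two-point base case (Lemma \ref{twohomogeneous}) to an $(m-1)$-point configuration, with a single common degree emerging across all of $V$. The natural inductive step is that once $u$ is translation-invariant along some $V' \subset V$, the homogeneity about the next $x_i \notin V'$ combined with the invariance along $V'$ allows one to re-center Lemma \ref{twohomogeneous} at any point of $V'$ in the direction of $x_i - x_0$, extending the invariance to the span of $V'$ and $x_i - x_0$ while preserving the degree. A smaller routine check is the continuity of $I_\phi$ under the compactness limit, which follows from Lemma \ref{compactness} and the nonvanishing of $H_{\phi, u}$.
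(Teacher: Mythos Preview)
Your proposal is correct and follows essentially the same approach as the paper: a compactness/contradiction argument via Lemma~\ref{compactness}, passage to a limit map that is conically homogeneous about each $x_i$ by Lemma~\ref{homchar}, and iterated application of Lemma~\ref{twohomogeneous} to obtain translation invariance along $V$, yielding constant frequency there and contradicting the surviving $\delta$-gap. The paper's proof is terser about the cone-splitting iteration (simply invoking ``repeated applications of Lemma~\ref{twohomogeneous}''), whereas you spell out the inductive step more carefully, but the arguments are the same.
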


The proofs of both of these lemmas will proceed by compactness arguments, using Lemma \ref{compactness}.  We shall also need the following two lemmas, which provides more precise information in the rigid case, when the maps are conically homogeneous.

\begin{lemma}\label{orderconstancy}
If $u:\Omega\to X$ is a minimizing map into an $F$-connected complex, and $u$ is conically homogeneous about the point $x_0$, then $\text{Ord}_u(x)$ is constant along rays emanating from $x_0$.
\end{lemma}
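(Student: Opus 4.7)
The plan is to exploit the dilation symmetry afforded by conical homogeneity. Fix a unit vector $\omega$ and two points $y_1=x_0+t_1\omega$, $y_2=x_0+t_2\omega$ on the ray from $x_0$, and set $\lambda=t_2/t_1$. The dilation $\psi_\lambda(x):=x_0+\lambda(x-x_0)$ sends $y_1\mapsto y_2$ and $B_s(y_1)\mapsto B_{\lambda s}(y_2)$; since $\psi_\lambda(x)$ lies on the same ray from $x_0$ as $x$ with $|\psi_\lambda(x)-x_0|=\lambda|x-x_0|$, the definition of conical homogeneity gives $u(\psi_\lambda(x))=\lambda^\alpha\cdot u(x)$, where the right-hand side denotes the scaling of $u(x)$ about the cone point $0_X$.

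Next I would track how $E_u$ and $H_u$ transform under $\psi_\lambda$. Because the embedding sends rays from $0_X$ to rays from $0$ in $\R^N$, the operation $\lambda^\alpha\cdot$ restricted to such rays is ordinary scalar multiplication, so applying the chain rule to $u\circ\psi_\lambda=\lambda^\alpha u$ yields $|\nabla u(\psi_\lambda(x))|^2=\lambda^{2(\alpha-1)}|\nabla u(x)|^2$; combined with the volume Jacobian $\lambda^m$ this gives $E_u(y_2,\lambda s)=\lambda^{2\alpha+m-2}E_u(y_1,s)$. For the height, the conical identity $d(\lambda^\alpha P,\lambda^\alpha Q)=\lambda^\alpha d(P,Q)$ (valid for all $\lambda>0$ by the extension argument in the paper) together with the surface Jacobian $\lambda^{m-1}$ yields $H_u(y_2,\lambda s,u(y_2))=\lambda^{2\alpha+m-1}H_u(y_1,s,u(y_1))$.

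The powers of $\lambda$ then cancel in the frequency ratio,
\[
I_u(y_2,\lambda s,u(y_2))=\frac{\lambda s\cdot E_u(y_2,\lambda s)}{H_u(y_2,\lambda s,u(y_2))}=I_u(y_1,s,u(y_1)),
\]
and since the domain is Euclidean we have $\text{Ord}_u=I_u$, with the continuous characterization (\ref{def:ctsord}) applicable because minimizers into nonpositively curved complexes are continuous by Theorem \ref{GS2.3}. Letting $s\to 0$ (so $\lambda s\to 0$ simultaneously) yields $\text{Ord}_u(y_2)=\text{Ord}_u(y_1)$, which is the desired constancy. The only subtle point is the energy scaling, where one must pass through the embedding to identify scaling about $0_X$ with ordinary scalar multiplication on rays from $0$ so that the chain rule applies cleanly; once that identification is in hand, the remaining computation is a routine change of variables.
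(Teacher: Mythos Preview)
Your proposal is correct and follows essentially the same route as the paper: both exploit the dilation $\psi_\lambda(x)=x_0+\lambda(x-x_0)$ together with the identity $u\circ\psi_\lambda=\lambda^\alpha u$ to show $\text{Ord}_u(z,r,u(z))=\text{Ord}_u(\lambda z,\lambda r,u(\lambda z))$ via the same change of variables, then let $r\to0$. The only differences are cosmetic---the paper normalizes $x_0=0$ and packages the energy and height scalings into a single displayed computation, while you compute them separately---so there is nothing substantively new to compare.
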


\begin{lemma}\label{twohomogeneous}
Suppose that $\Omega\subset\R^m$ is convex open domain.  Suppose also that $u:\Omega\to X$ is a nonconstant continuous map into a conical $F$-connected complex which is conically homogeneous with respect to two distinct points $x_1$ and $x_2$, with orders $\alpha_1$ and $\alpha_2$, respectively.  That is,
\begin{align*}
u(x)&=|x-x_1|^{\alpha_1}u\bigg(\frac{x-x_1}{|x-x_1|}+x_1\bigg)&\text{for }x\neq x_1\\
u(x)&=|x-x_2|^{\alpha_2}u\bigg(\frac{x-x_2}{|x-x_2|}+x_2\bigg)&\text{for }x\neq x_2.
\end{align*}

Then $\alpha_1=\alpha_2$ and $u$ is independent of the $x_1-x_2$ direction, that is, \[
u(y+\lambda(x_1-x_2))=u(y)\] for any $y\in\R^m,\lambda\in\R$ for which both $y$ and $y+\lambda(x_1-x_2)$ lie in $\Omega$.  (Note that we could quite naturally extend such a conically homogeneous map to all of $\R^m$, however, removing this latter restriction.)  In particular, $u(\lambda x_1+(1-\lambda)x_2)=0_X$ for every $\lambda\in\R$.  (That is, $u$ is identically $0_X$ along the line determined by $x_1$ and $x_2$.)
\end{lemma}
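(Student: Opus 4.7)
\textit{Step 1.} I would first observe that $\alpha_1,\alpha_2>0$ and $u(x_1)=u(x_2)=0_X$. Conical homogeneity at $x_i$ of order $\alpha_i$ reads $u(x_i+tv)=t^{\alpha_i}\cdot u(x_i+v)$, where $t^{\alpha_i}$ denotes the cone-scaling of $X$ about $0_X$. Continuity of $u$ at $x_i$ forbids $\alpha_i<0$ (which would produce unbounded blow-up at $x_i$) and $\alpha_i=0$ (which would force $u$ constant along every ray from $x_i$, and combined with the analogous property at the other center this would make $u$ globally constant); hence $\alpha_i>0$, and letting $t\to 0^+$ in the homogeneity formula yields $u(x_i)=0_X$.

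\textit{Step 2.} Next, I would show $u\equiv 0_X$ on the line $L$ through $x_1,x_2$. Set $d=|x_2-x_1|$ and $e=(x_2-x_1)/d$; homogeneity about $x_1$ gives $u(x_1+te)=t^{\alpha_1}u(x_1+e)$, and at $t=d$ this becomes $0_X=u(x_2)=d^{\alpha_1}u(x_1+e)$, forcing $u(x_1+e)=0_X$ and hence $u\equiv 0_X$ on the ray $\{x_1+te:t\geq 0\}$. A symmetric argument from $x_2$ yields $u\equiv 0_X$ on $\{x_2-te:t\geq 0\}$, and the union of these two rays is all of $L$. This already proves the final assertion of the lemma.

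\textit{Step 3.} To obtain both $\alpha_1=\alpha_2$ and translation invariance, I would compose the two scalings. For $\lambda_1,\lambda_2>0$ with $\lambda_1\lambda_2=1$, a direct computation shows that the map ``scale about $x_1$ by $\lambda_1$, then about $x_2$ by $\lambda_2$'' acts on the domain as the translation $y\mapsto y+\mu e$ with $\mu=d(1-1/\lambda_1)$ (equivalently $\lambda_1=d/(d-\mu)$), and acts on $u$-values by cone-scaling by $\lambda_2^{\alpha_2}\lambda_1^{\alpha_1}=\lambda_1^{\alpha_1-\alpha_2}$. Setting $f(\mu):=(d/(d-\mu))^{\alpha_1-\alpha_2}$, this yields the identity
\[
u(y+\mu e)=f(\mu)\,u(y)
\]
whenever $y\in\Omega$ and all the intermediate scaled points lie in $\Omega$; since $\Omega$ is open and convex and $x_1,x_2\in\Omega$, this is valid at any $y\in\Omega$ for all sufficiently small $\mu$.

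\textit{Step 4.} Finally, by non-constancy there exists $y_0\in\Omega$ with $u(y_0)\neq 0_X$. Applying the identity of Step 3 twice at $y_0$ yields
\[
f(\mu_1+\mu_2)\,u(y_0)=u(y_0+(\mu_1+\mu_2)e)=f(\mu_2)f(\mu_1)\,u(y_0),
\]
whence $f(\mu_1+\mu_2)=f(\mu_1)f(\mu_2)$. Assuming $\alpha_1\neq\alpha_2$ and equating bases in this functional equation reduces it to $(d-\mu_1)(d-\mu_2)=d(d-\mu_1-\mu_2)$, i.e.\ $\mu_1\mu_2=0$, which fails for generic small nonzero $\mu_1,\mu_2$. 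Hence $\alpha_1=\alpha_2=:\alpha$, $f\equiv 1$, and the identity becomes $u(y+\mu e)=u(y)$ locally; a clopen/connectedness argument along the interval $\{\mu:y+\mu e\in\Omega\}$ promotes this to global translation invariance in the $e$ direction, completing the proof. The main obstacle is the technical bookkeeping needed to keep the intermediate scaled points inside $\Omega$; working with small $\mu_1,\mu_2$ handles the local case, and continuity together with convexity of $\Omega$ promote the result globally.
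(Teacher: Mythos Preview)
Your proof is correct and proceeds by a genuinely different route from the paper's. The paper argues geometrically: after the same Step~1/Step~2 observations, it fixes a vector $v\perp x_2-x_1$ and examines the six points $x_1, x_2, x_1+v, x_2+v, x_1+2v, x_2+2v$ together with two midpoints that lie on crossing diagonals, using the coincidence of these midpoints under the two homogeneities to force $2^{\alpha_1-\alpha_2}P=P$, hence $\alpha_1=\alpha_2$; a second explicit configuration then gives invariance along $x_2-x_1$.

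Your argument is more algebraic: you observe that the composition of the dilation about $x_1$ by $\lambda_1$ with the dilation about $x_2$ by $\lambda_2=\lambda_1^{-1}$ is a translation on the domain and a cone-scaling by $\lambda_1^{\alpha_1-\alpha_2}$ on values, yielding $u(y+\mu e)=f(\mu)u(y)$ with $f(\mu)=(d/(d-\mu))^{\alpha_1-\alpha_2}$. The multiplicativity $f(\mu_1+\mu_2)=f(\mu_1)f(\mu_2)$ then forces $\mu_1\mu_2=0$ unless $\alpha_1=\alpha_2$, after which $f\equiv 1$ gives local translation invariance, promoted to global by a clopen argument. This is clean, avoids the ad~hoc point configurations, and makes the underlying group-theoretic reason (two dilations generate translations) transparent. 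The paper's approach, by contrast, is more visual and self-contained at the level of individual points. Both require the same implicit reading of the homogeneity hypothesis as a scaling relation valid whenever the relevant points lie in $\Omega$ (the paper uses this equally freely), and both handle the domain bookkeeping by working first near the segment $[x_1,x_2]$; your acknowledgment of this technicality and the local-to-global step via connectedness are adequate.
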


\begin{proof}[Proof of Lemma \ref{orderconstancy}]
Without loss of generality, we suppose that $x_0=0$, so that two points $p,q$ are along the same ray emanating from $x_0$ if and only if $p=\lambda q$ for some $\lambda>0$.  We note that if $u$ is conically homogeneous of order $\alpha$, then $\nabla u$ is conically homogeneous of order $\alpha-1$.  Using this, the definition of the order, the definition of conical homogeneity, and changing variables to $y=\lambda x$, we see that for any $\lambda>0$,
\begin{align*}
\text{Ord}_u(z,r,u(z))&=\frac{r\int_{B_{r}(z)}\lambda^{2-2\alpha}|\nabla u(\lambda x)|^2\phantom{i}dx}{\int_{\partial B_r(z)}\lambda^{-2\alpha}d^2(u(\lambda x),u(\lambda z))\phantom{i}d\Sigma(x)}\\
&=\frac{r\lambda^{2-n-2\alpha}\int_{B_{\lambda r}(\lambda z)}|\nabla u(y)|^2\phantom{i}dy}{\lambda^{-n+1-2\alpha}\int_{\partial B_{\lambda r}(\lambda z)}d^2(u(y),u(\lambda z))\phantom{i}d\Sigma(y)}\\
&=\frac{\lambda r\int_{B_{\lambda r}(\lambda z)}|\nabla u(y)|^2\phantom{i}dy}{\int_{\partial B_{\lambda r}(\lambda z)}d^2(u(y),u(\lambda z))\phantom{i}d\Sigma(y)}\\
&=\text{Ord}_u(\lambda z,\lambda r,u(\lambda z)).
\end{align*}

Hence,
\[
\text{Ord}_u(z)=\lim_{r\to0}\text{Ord}_u(z,r,u(z))=\lim_{r\to0}\text{Ord}_u(\lambda z,\lambda r,u(\lambda z))=\text{Ord}_u(\lambda z)
\]
for any $\lambda>0$.  This demonstrates that the order is constant along rays emanating from $x_0=0$, as desired.
\end{proof}

\begin{proof}[Proof of Lemma \ref{twohomogeneous}]
We first remark that $u(x_1)=u(x_2)=0_X$, by conical homogeneity and continuity.  By homogeneity, we thus see that $u(x)\equiv0_X$ along the line passing through these points.

By homogeneity, it suffices to show that $u$ is constant along lines in the $x_2-x_1$ direction which lie close to the line joining $x_1$ to $x_2$.  For lines further away, we can simply rescale towards either $x_1$ or $x_2$, use the invariance along lines close to $x_1$ and $x_2$, and then rescale back up.  In particular, let $\epsilon$ be such that the cylinder of radius $2\epsilon$ about the line segment between $x_1$ and $x_2$ is contained in $\Omega$. 

Let $v$ be a vector perpendicular 
to $x_2-x_1$, of length less than $\epsilon$.  Consider the points $x_1+v$, $x_1+2v$, $x_2+v$, $x_2+2v$, $\frac{x_1+x_2}{2}+\frac{v}{2}=:m_1$, and $\frac{x_1+x_2}{2}+v=:m_2$.  Suppose that $u(x_1+v)=P$ and $u(x_2+v)=Q$.  (See Figure \ref{fig:homogeneity}.)

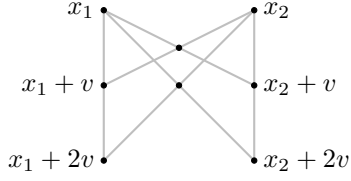
\begin{figure}
\centering
\begin{tikzpicture}
\draw[gray!50,thick] (-1,0)--(1,-1);
\draw[gray!50,thick] (-1,0)--(1,-2);
\draw[gray!50,thick] (1,0)--(-1,-1);
\draw[gray!50,thick] (1,0)--(-1,-2);
\draw[gray!50,thick] (-1,0)--(-1,-2);
\draw[gray!50,thick] (1,0)--(1,-2);
\filldraw[black] (-1,0) circle (1pt) node [anchor=east]{$x_1$};
\filldraw[black] (1,0) circle (1pt) node [anchor=west]{$x_2$};
\filldraw[black] (-1,-1) circle (1pt) node [anchor=east]{$x_1+v$};
\filldraw[black] (1,-1) circle (1pt) node [anchor=west]{$x_2+v$};
\filldraw[black] (-1,-2) circle (1pt) node [anchor=east]{$x_1+2v$};
\filldraw[black] (1,-2) circle (1pt) node [anchor=west]{$x_2+2v$};
\filldraw[black] (0,-0.5) circle (1pt) node [anchor=south] {};
\filldraw[black] (0,-1) circle (1pt) node [anchor=north] {};
\end{tikzpicture}
\caption{The points $x_1,x_1+v,x_1+2v$ are marked, in a vertical line, and the points $x_2,x_2+v,x_2+2v$ in another vertical line, so that $x_1,x_2$ are in the same horizontal level (similarly for $x_1+v, x_2+v$, and similarly for $x_1+2v,x_2+2v$).  Lines are drawn to connect $x_1$ to $x_2+v$ and $x_2$ to $x_1+v$; the shared midpoint of these lines is marked; this point is $\frac{x_1+x_2}{2}+\frac{v}{2}$.  Similarly, lines are drawn to connect $x_1$ to $x_2+2v$ and $x_2$ to $x_1+2v$; the shared midpoint of these lines is marked; this point is $\frac{x_1+x_2}{2}+v$.  In the first part of this argument, we rescale along the six lines shown to draw conclusions about the map $u$, which is assumed to be homogeneous about both $x_1$ and $x_2$.}
\label{fig:homogeneity}
\end{figure}

Using the conical homogeneity of $u$, we see at once that $u(x_1+2v)=2^{\alpha_1}P$ and $u(x_2+2v)=2^{\alpha_2}Q$.  Because $\frac{x_1+x_2}{2}+\frac{v}{2}$ is both the midpoint of the line joining $x_1$ and $x_2+v$ and of the line joining $x_2$ to $x_1+v$, we see that
\begin{equation}\label{doublemidpoint}
2^{-\alpha_1}Q=u\Big(\frac{x_1+x_2}{2}+\frac{v}{2}\Big)=2^{-\alpha_2}P.
\end{equation}

Finally, using the fact that $\frac{x_1+x_2}{2}+v$ is (similarly) the midpoint of both the line joining $x_1$ to $x_2+2v$, and of the line connecting $x_2$ and $x_1+2v$, we see that
\[
2^{-\alpha_2}2^{\alpha_1}P=u\Big(\frac{x_1+x_2}{2}+v\Big)=2^{-\alpha_1}2^{\alpha_2}Q=P
\]
where in the last equality we have used (\ref{doublemidpoint}) in recalling that $2^{-\alpha_1}Q=2^{-\alpha_2}P$.

Hence we see that $2^{\alpha_2-\alpha_1}P=P$ and hence that $\alpha_2=\alpha_1$ or $P=Q=0_X$.  Because $u$ is nonconstant, by choosing $v$ so that one of $P$ or $Q$ is not $0_X$, we see that $\alpha_2=\alpha_1$.  We conclude, in either case, that $P=Q$.

It remains only to show that $u$ is independent of the $x_2-x_1$ direction.  So, suppose again that $v\in\R^m$ is perpendicular to $x_2-x_1$ and has length less than $\epsilon$.  We shall show that for any $\lambda\in\R$ for which $y:=x_1+v+\lambda(x_2-x_1)\in\Omega$, we have that $u(y)=u(x_1+v)=u(x_2+v)$.  If $\lambda\geq0.5$, we consider the point \[z:=x_1+\frac{v}{\lambda}+(x_2-x_1)=x_2+\frac{v}{\lambda}.\]
(See Figure \ref{fig:invariance}.)

By our choice of $v$ and the fact that $\lambda\geq0.5$, $z\in\Omega$.  By construction, $z$ lies on the ray connecting $x_1$ and $y$, and on the ray connecting $x_2$ and $x_2+v$.  In particular,
\[
z=x_1+\frac{1}{\lambda}(y-x_1)=x_2+\frac{1}{\lambda}v.
\]
We apply the conical homogeneity of $u$ to see that
\[
u(z)=\lambda^{-\alpha_2}u(x_2+v)=\lambda^{-\alpha_1}u(y).
\]
In particular, we conclude that $u(y)=u(x_2+v)=u(x_1+v)$.

\begin{figure}\centering
\begin{tikzpicture}
\draw[gray!50,thick] (-1,0)--(1,0);
\draw[gray!50,thick] (-1,-1)--(1,-1);
\draw[gray!50,thick] (1,0)--(1,-1.5);
\draw[gray!50,thick] (-1,0)--(1,-1.5);
\filldraw[black] (-1,0) circle (1pt) node [anchor=east] {$x_1$};
\filldraw[black] (1,0) circle (1pt) node [anchor=west] {$x_2$};
\filldraw[black] (-1,-1) circle (1pt) node [anchor=east] {$x_1+v$};
\filldraw[black] (1,-1) circle (1pt) node [anchor=west] {$x_2+v$};
\filldraw[black] (0.333333,-1) circle (1pt) node [anchor=north] {$y$};
\filldraw[black] (1,-1.5) circle (1pt) node [anchor=west] {$z$};
\end{tikzpicture}
\caption{The points $x_1$, $x_2$, $x_1+v$, $x_2+v$, $y$, and $z$ are marked.  One horizontal line connects $x_1$ to $x_2$, and another horizontal line connects $x_1+v$, $y$, and $x_2+v$.  A vertical line connects $x_2$ to $x_2+v$, and a diagonal line connects $x_1$ to $y$; the point $z$ is drawn at their intersection.    In the second part of this argument, we rescale along the drawn vertical and diagonal lines.}\label{fig:invariance}
\end{figure}
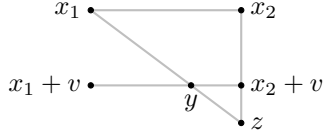

For $\lambda\leq0.5$, the same argument works with the roles of $x_1,x_2$ interchanged.  In particular taking
\[
z=x_1+\frac{v}{1-\lambda}=x_2+\frac{v}{1-\lambda}+(x_1-x_2)
\]
the rest of the argument follows analogously because in this case
\[
z=x_1+\frac{1}{1-\lambda}v=x_2+\frac{1}{1-\lambda}(y-x_2).
\]

In either case, $u(y)=u(x_1+v)=u(x_2+v)$, so that $u$ is indeed independent of the $x_2-x_1$ direction, as desired.

This result is similar to Lemma 6.8 of \cite{DMSV}, but our proof does not require that the homogeneous map to have domain $\R^m$, and is hence slightly more general.  The proof presented here is also of a more geometric flavor.
\end{proof}

\begin{proof}[Proof of Lemma \ref{lemma6.4}]
We assume for sake of contradiction that no such $\epsilon$ exists and conclude that there is a sequence of maps $u_q$ satisfying Assumptions \ref{cone} and \ref{bound} and a sequence of sets $P_q=\{x_{q,0},\dots,x_{q,m-2}\}\subset B_1(0)$ so that
\begin{itemize}
\item for each $q$, $P_q$ is $\rho_1$-linearly independent
\item $I_{\phi,u_q}(x_{q,i},2)-I_{\phi,u_q}(x_{q,i},\rho_2)\to0$ as $q\to\infty$
\item there is a point $y_q\in\A$ (i.e. $\text{Ord}_{u_q}(y_q)>1$) at least a distance of $\rho_3$ from the affine span of $P_q$.
\end{itemize}

We normalize $u$ so that $H_{\phi,u_q}(0,64)=1$.  Then, as $I_{\phi,u_q}(0,64)\leq\Lambda$, these maps have uniformly bounded energy.  Applying Lemma \ref{compactness}, we conclude that there is a subsequence (not relabeled) $u_q$ converging uniformly and strongly in $H^1(B_{16}(0))$ to a nonconstant minimizing map $u$ on the ball $B_{16}(0)$.

By extracting an appropriate subsequence, we may also assume that for each $0\leq i\leq m-2$, $x_{q,i}\to x_i$ and that $y_q\to y$.  Because each set $\{x_{q,0},\dots,x_{q,m-2}\}$ is $\rho_1$-linearly independent, the limit set $\{x_0,\dots,x_{m-2}\}$ is $\rho_1$-linearly independent as well, and similarly $y$ is at a distance of at least $\rho_3$ from their affine span $x_0+\text{span}\{x_1-x_0,\dots,x_{m-2}-x_0\}$.  Moreover, because each $y_q$ is a point of high order for the map $u_q$, the point $y$ is a point of high order for the map $u$ (by Lemma \ref{compactness}).

By the fact that the $u_q$ converge uniformly to $u$, and $x_{q,i}\to x_i$, we conclude that $H_{\phi,u_q}(x_{q,i},r)\to H_{\phi,u}(x_i,r)$ as $q\to\infty$, for each $0\leq i\leq m-2$ and for any $r<16-|x_i|$ (so that $B_r(x_i)$ is compactly contained in $B_{16}(0)$, that is).

Because the maps $u_q$ converge to $u$ strongly in $H^1(B_{16}(0))$, it is not hard to see that $E_{\phi,u_q}(x_{q,i},r)\to E_{\phi,u_q}(x_i,r)$ for each $0\leq i\leq m-2$ and for any $r<16-|x_i|$.

In particular, we see that for each $0\leq i\leq m-2$, $I_{\phi,u_q}(x_{q,i},r)\to I_{\phi,u}(x_i,r)$ as long as $r<16-|x_i|$; in particular this holds for any $r<15$ and hence for each of $r=2$ and $r=\rho_2$.  Recalling that $I_{\phi,u_q}(x_{q,i},2)-I_{\phi,u_q}(x_{q,i},\rho_2)\to0$, we conclude that for each $0\leq i\leq m-2$,
\[
I_{\phi,u}(x_i,2)=I_{\phi,u}(x_i,\rho_2).
\]
Applying Lemma \ref{homchar}, we conclude that $u$ is conically homogeneous about each $x_i$ on $B_2(x_i)$ and in particular on $B_1(0)$.  In particular, by Lemma \ref{orderconstancy}, the order of $u$ is constant along each ray emanating from any of the $(m-1)$ points $x_i$.

Thus, the points along the ray from $x_0$ to $y$ are all of order greater than $1$ (including $x_0$ itself, by the upper semicontinuity of the order).  But then when we take the rays from these points to $x_1$, we obtain a two-dimensional set on which all of the points have high order (in the triangle with vertices at $y,x_0,$ and $x_1$).  Iterating this process for each of the $(m-1)$ points $x_i$, we see that the set
\[
\bigg\{y+\sum_{i=0}^{m-2}t_i(x_i-y)\text{ so that }0\leq t_i\leq1\bigg\}
\]
consists entirely of high-order points.  However, this set has Hausdorff dimension $m-1$, contradicting Theorem \ref{thm:BigGS}.  We thus conclude the desired result.
\end{proof}

\begin{proof}[Proof of Lemma \ref{lemma6.5}]
As above, we proceed by contradiction: suppose no such $\epsilon$ exists, and take a sequence $u_q$ with corresponding $P_q=\{x_{q,0},\dots,x_{q,m-2}\}\subset B_1(0)$ so that
\begin{itemize}
\item for each $q$, $P_q$ is $\rho_1$-linearly independent
\item $I_{\phi,u_q}(x_{q,i},2)-I_{\phi,u_q}(x_{q,i},\rho_2)\to0$ as $q\to\infty$
\item there are points $y_{q,1},y_{q,2}$ in $V_q$, and $r_{q,1},r_{q,2}\in[\rho_3,1]$ so that $|I_\phi(y_{q,1},r_{q,1})-I_\phi(y_{q,2},r_{q,2})|>\delta$
\end{itemize}
where $V_q$, of course, denotes $\{x_{0,q}+\text{span}\{x_{i,q}-x_{0,q}\}\}$.

Proceeding exactly as in the previous proof, we can choose an appropriate subsequence of the $u_q$ so that the $x_{q,i}\to x_i$ for $i=0,\dots,m-2$, and so that $y_{q,j},r_{q,j}\to y_j,r_j$ (respectively) for $j=1,2$.  We can also choose this subsequence to converge uniformly on $B_{16}(0)$ to a nonconstant map $u$.  Just as in the preceding argument, we then have that if $z_q\to z$, and $r<16-|z|$, then $I_{\phi,u_q}(z_q,r)\to I_{\phi,u}(z,r)$.

In particular, for each $x_i$, we have that $I_{\phi,u}(x_i,2)=I_{\phi,u}(x_i,\rho_2)$ so that $u$ is conically homogeneous about each $x_i$ on $B_1(0)$, by Lemma \ref{homchar}.  Moreover, for $y_1,y_2$, we have that $y_1,y_2\in V:=\{x_0+\text{span}\{x_i-x_0\}\}$, and that
\begin{equation}\label{6.5lowerbound}
|I_{\phi,u}(y_1,r_1)-I_{\phi,u}(y_2,r_2)|\geq\delta.
\end{equation}

On the other hand, by repeated applications of Lemma \ref{twohomogeneous}, we have that $u$ is invariant with respect to the plane spanned by $\{x_i-x_0\}_{i=1}^{m-2}$.  In particular, $u$ is homogeneous about every point of $V$, and $I_{\phi,u}(x,r)$ is constant as a function of $r$ for every $x$ in that plane.  Putting these facts together, we conclude that there is some $\alpha$ so that $I_{\phi,u}(x,r)=\alpha$ for every $x\in V$ and for every $r>0$.

In particular, we see that
\[
I_{\phi,u}(y_1,r_1)=I_{\phi,u}(y_2,r_2)
\]
in contradiction of (\ref{6.5lowerbound}), as desired.
\end{proof}

\section{Minkowski-Type Estimate}\label{Minkowski7}

We now use the previous estimates and Theorem \ref{thm:NV} to prove the Minkowski upper bound of Theorem \ref{MinkowskiBoundThm}.

We begin by restating Theorem \ref{thm:NV}, for convenience.

\begin{theorem}
Fix $k\leq m\in\N$, let $\{B_{s_j}(x_j)\}_{j\in J}\subseteq B_2(0)\subset\R^m$ be a sequence of pairwise disjoint balls centered in $B_1(0)$, and let $\mu$ be the measure
\[
\mu=\sum_{j\in J}s_j^k\delta_{x_j}.
\]

Then, there exist constants $\delta_0=\delta_0(m)$ and $C_R=C_R(m)$ depending only on $m$ such that if for all $B_r(x)\subset B_2(0)$ with $x\in B_1(0)$ we have the integral bound
\[
\int_{B_r(x)}\bigg(\int_0^rD_{\mu}^k(y,s)\frac{ds}{s}\bigg)d\mu(y)<\delta_0^2r^k
\]
then the measure $\mu$ is bounded by
\[
\mu(B_1(0))=\sum_{j\in J}s_j^k\leq C_R.
\]
\end{theorem}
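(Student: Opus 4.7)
The plan is to prove this via a quantitative Reifenberg-type construction: I would build, scale by scale, a Lipschitz $k$-surface $\Sigma \subset B_2(0)$ of controlled $\mathcal{H}^k$-measure near which essentially all of the mass of $\mu$ is concentrated, and then exploit the disjointness of the balls $B_{s_j}(x_j)$ to extract the packing estimate $\sum_j s_j^k \leq C_R$ from a bound $\mathcal{H}^k(\Sigma \cap B_{3/2}(0)) \leq C(m)$. The induction would run downward in scale: at the $i$-th stage I would carry a Lipschitz approximation $\Sigma_i$ together with a ``good'' collection of centers $x_j$ having $s_j \sim 2^{-i}$ and lying within $2^{-i-1}$ of $\Sigma_i$; then $\Sigma_{i+1}$ is produced by locally replacing $\Sigma_i$ with a best-approximating affine $k$-plane $L_{y, 2^{-i}}$ that nearly attains the infimum in $D_\mu^k(y, 2^{-i})$ at each well-separated point $y$ of the current good collection.

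The central geometric input would be a tilting lemma: if $y_1, y_2$ satisfy $|y_1 - y_2| \lesssim r$ and both $D_\mu^k(y_\ell, 8r) < \eta$, then in Hausdorff distance on $B_r(y_1) \cup B_r(y_2)$ the planes $L_{y_1, r}$ and $L_{y_2, r}$ differ by at most $C r \sqrt{\eta}$. This I would prove via the bilinear-form characterization of the mean flatness from Section \ref{Jones5}: both planes must nearly contain the top-$k$ eigenspace of the inertia tensor of $\mu$ on a common ball, and the gap between the $k$-th and $(k+1)$-th eigenvalues is precisely controlled by the flatness. Telescoping this estimate along dyadic scales and invoking the hypothesis
\[
\int_{B_r(x)} \int_0^r D_\mu^k(y, s) \, \frac{ds}{s} \, d\mu(y) \leq \delta_0^2 r^k
\]
would show that the normal directions of $\Sigma_i$ shift only by an $\ell^2$-summable amount in $i$, so the sequence $\Sigma_i$ converges uniformly to a Lipschitz $k$-graph $\Sigma$ over a fixed reference $k$-plane, with Lipschitz constant depending only on $m$.

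Once $\Sigma$ is in hand, the conclusion is standard: the disjoint balls $B_{s_j/2}(\pi(x_j))$ on $\Sigma$ (with $\pi$ nearest-point projection) have bounded overlap, yielding $\sum_j s_j^k \lesssim \mathcal{H}^k(\Sigma \cap B_{3/2}(0)) \leq C(m)$. The main obstacle is the treatment of the ``bad'' centers at which some subball $B_r(x_j)$ fails the pointwise flatness hypothesis of the tilting lemma, since the integral hypothesis gives only $L^1$ control of $D_\mu^k$ in $\mu$, not pointwise control. Handling this requires a stopping-time covering: at each bad ball restart the whole construction and absorb the bad mass using the Chebyshev-type inequality $\mu(\{y \in B_r(x) : D_\mu^k(y, r) > \eta\}) \lesssim \eta^{-1} \delta_0^2 r^k$ extracted from the integral hypothesis. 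One must then verify that the collection of stopping balls over all scales carries total $k$-content bounded by $C_R/2$, so that combined with the ``good'' contribution the induction closes. The threshold $\delta_0$ must be chosen small enough (in terms of the target Lipschitz constant of $\Sigma$) that the cumulative tilting error over good scales is less than $1/2$ and the bad mass over stopping scales is less than $C_R/2$, which is the quantitative heart of the argument.
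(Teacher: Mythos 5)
You should first note that the paper itself offers no proof of this statement: it is quoted verbatim as Theorem 3.4 of \cite{NV} (Theorem \ref{thm:NV}) and used as a black box in Section \ref{Minkowski7}, so the only fair comparison is with the Naber--Valtorta argument your sketch is implicitly trying to reproduce. Your plan does capture the broad strategy of that proof (a scale-by-scale Reifenberg-type construction with tilting of best planes controlled by the mean flatness, plus stopping times for bad balls), but as written it has genuine gaps at exactly the points where the real proof is delicate.

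First, the tilting lemma you propose is false without a lower mass bound: since $\mu$ is a finite sum of weighted Dirac masses, a ball $B_r(y)$ may carry mass much smaller than $r^k$, or mass concentrated near a lower-dimensional set, and then the infimizing plane in $D_\mu^k(y,r)$ is badly non-unique --- the bilinear-form argument controls the top-$k$ eigenspace only when the $k$ largest eigenvalues are bounded below by $c\,r^{k+2}$, which needs $\mu(B_r(y))\gtrsim r^k$ together with effective $k$-dimensional spread of the mass. In \cite{NV} this forces a case distinction between balls of small and large content and a construction of the approximating surfaces only where there is enough measure; your sketch assumes plane stability everywhere. Second, and more seriously, there is a circularity you do not resolve: to convert the integral hypothesis into summable tilting along your good centers, and to make your Chebyshev estimate $\mu(\{D_\mu^k>\eta\})\lesssim\eta^{-1}\delta_0^2r^k$ useful when summed over all scales and all stopping balls, you need upper bounds $\mu(B_s(z))\lesssim s^k$ at every intermediate scale --- which is precisely (a localized form of) the conclusion. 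Naber--Valtorta close this loop by a downward induction on scales in which the packing bound at all coarser scales is part of the inductive hypothesis, and the choice of $\delta_0$ is made so that the induction propagates; your claim that the stopping balls carry total content at most $C_R/2$ is asserted rather than derived, and without the inductive measure bound the $L^1(\mu)$ hypothesis does not control the number of bad balls per scale. Finally, $\ell^2$-summability of the tilts alone does not produce a single Lipschitz graph over a fixed reference plane with dimensional Lipschitz constant; one needs smallness (not just summability) of the accumulated tilt, which is again where the quantitative bookkeeping of \cite{NV} lives. So the proposal is a reasonable roadmap of the known proof, but the steps it leaves to be ``verified'' are the theorem's actual content.
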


We shall use this theorem and our previous estimates to prove the following result.

\begin{proposition}\label{prop7.2}
Let $u$ satisfy Assumptions \ref{cone} and \ref{bound}.  Fix $x\in B_{1/8}(0)$ and $0<s<r\leq\frac{1}{8}$, and let $D\subset\A\cap B_r(x)$ be any subset of $\A\cap B_r(x)$.  Set $U=\text{sup}\{I_\phi(y,r):y\in D\}$.  Then there exist a positive $\delta=
\delta(m,X,\Lambda)$, a constant $C_V=C_V(m)\geq1$, a finite covering of $D$ with balls $B_{s_i}(x_i)$, and a corresponding decomposition of $D$ into sets $A_i\subset D$ so that:\begin{enumerate}
\item $A_i\subset B_{s_i}(x_i)$ and $s_i\geq s$,
\item $\sum_{i}s_i^{m-2}\leq C_Vr^{m-2}$,
\item for each $i$, either $s_i=s$, or
\begin{equation}\label{freqdrop}
\text{sup}\{I_\phi(y,s_i):y\in A_i\}\leq U-\delta.
\end{equation}
\end{enumerate}
\end{proposition}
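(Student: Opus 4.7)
The plan is to build the cover by a stopping-time, Vitali-type construction that locates the radial scale at which the smoothed frequency has dropped by $\delta$ at each point of $D$. For each $y \in D$, the monotonicity of $I_\phi(y,\cdot)$ shows that $\{s' \in [s,r] : I_\phi(y,s') \leq U - \delta\}$ is an initial sub-interval of $[s,r]$; I define the \emph{stopping radius} $\rho(y)$ to be its supremum, with the convention $\rho(y) := s$ when the sub-interval is empty (the \emph{undropped} case). By monotonicity, $I_\phi(y,s') \leq U - \delta$ for every $s' \leq \rho(y)$ in the dropped case, while undropped points satisfy $I_\phi(y,s') > U - \delta$ throughout $[s,r]$. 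The constants $\delta = \delta(m,X,\Lambda)$ and $\eta = \eta(m) \in (0,1)$ will be tuned at the end so that the hypothesis of Theorem \ref{thm:NV} closes.

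To assemble the cover, stratify $D$ dyadically by $\rho$: let $J$ be the largest integer with $r\eta^{J+1} \geq s$, set
\[
L_j := \{y \in D : r\eta^{j+1} < \rho(y) \leq r\eta^j\}, \qquad 0 \leq j \leq J,
\]
and let $L_\infty := D \setminus \bigcup_{j\leq J} L_j$, consisting of the undropped points together with those having $\rho(y) < r\eta^{J+1}$. On each $L_j$ with $j < \infty$, apply Vitali to $\{B_{r\eta^{j+1}/5}(y)\}_{y \in L_j}$ to extract a disjoint subcollection $\{B_{r\eta^{j+1}/5}(y_{j,i})\}_i$ whose fivefold enlargements $\{B_{r\eta^{j+1}}(y_{j,i})\}_i$ cover $L_j$; on $L_\infty$ perform the analogous Vitali selection at the fixed scale $s$. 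Set $s_{j,i} := r\eta^{j+1}$ and $s_k := s$ for the level-$\infty$ balls, and define each $A_i$ to be the intersection of $B_{s_i}(x_i)$ with the corresponding stratum, minus points already assigned under some fixed enumeration. Condition (1) and the base-scale clause of (3) are then immediate; for the drop clause of (3), every $y \in A_i \subset L_j$ with $j < \infty$ has $\rho(y) \geq r\eta^{j+1} = s_i$, so monotonicity gives $I_\phi(y,s_i) \leq I_\phi(y,\rho(y)) \leq U - \delta$.

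For the volume estimate (2), the plan is to apply Theorem \ref{thm:NV} to the measure $\mu := \sum_i s_i^{m-2}\delta_{x_i}$, after reducing to the pairwise-disjoint setting via a bounded-overlap refinement (disjointness within each level is automatic from Vitali; cross-level the dyadic scales decay geometrically, so a greedy selection across levels loses only a dimensional factor). The NV hypothesis requires bounding the double integral $\int_{B_\rho(z)} \int_0^\rho D_\mu^{m-2}(y,t)\, dt/t\, d\mu(y)$ by $\delta_0^2 \rho^{m-2}$ on all sub-balls, which I would verify by combining Proposition \ref{flatbound} to dominate the mean flatness by an averaged pinching $\int W_t^{Ct}(z)\,d\mu(z)$, with the fact that at every center $z$ of the cover, the pinching $W_t^{Ct}(z)$ is at most $\delta$ on the range of scales $t \geq \rho(z)$, which accounts for the bulk of the integration range.

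The main obstacle is the small-scale regime $t < \rho(z)$, on which the pinching $W_t^{Ct}(z)$ is only controlled by $C(m,\Lambda)$ via Lemma \ref{lemma3.4}. This is handled by observing that for $t$ well below the Vitali separation scale $s_i/5$ the ball $B_t(y)$ meets at most one center of $\mu$, so $D_\mu^{m-2}(y,t)$ vanishes at those scales; for the intermediate scales, a Fubini exchange spreads the bounded pinching against the geometric $\eta$-decay of the stopping radii across strata, producing a convergent series. Choosing $\delta$ sufficiently small in terms of $m,X,\Lambda$ and $\delta_0(m)$, and then $\eta$ sufficiently small in terms of $m$, forces the full NV integrand below $\delta_0^2 \rho^{m-2}$; Theorem \ref{thm:NV} then yields $\mu(B_r(x)) \leq C_R r^{m-2}$, giving (2) with $C_V = 5^{m-2} C_R(m)$.
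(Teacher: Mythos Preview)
Your stopping-time/Vitali construction cleanly delivers properties (1) and (3), but the verification of the Naber--Valtorta hypothesis for (2) has a genuine gap. After applying Proposition~\ref{flatbound} and Fubini, the NV integrand is controlled by
\[
\int_{B_{2\rho}(y)} \Big(\int_{s_\zeta}^{\rho} W_t^{Ct}(\zeta)\,\tfrac{dt}{t}\Big)\,d\mu(\zeta)
\;\le\; C\int_{B_{2\rho}(y)}\big(I_\phi(\zeta,C\rho)-I_\phi(\zeta,s_\zeta)\big)\,d\mu(\zeta),
\]
so what you need is that the total pinching $I_\phi(\zeta,C\rho)-I_\phi(\zeta,s_\zeta)$ at \emph{each center $\zeta$}, from its own ball scale $s_\zeta$ up, is small. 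Your stopping time only pins down $I_\phi(\zeta,\rho(\zeta))\approx U-\delta$; but your assigned radius is $s_\zeta=r\eta^{j+1}<\rho(\zeta)$, and nothing prevents $I_\phi(\zeta,s_\zeta)$ from being much smaller than $U-\delta$. Indeed, to have the drop (3) you need $s_\zeta\le\rho(\zeta)$, while to make the pinching from $s_\zeta$ to $r$ small you would need $s_\zeta\ge\rho(\zeta)$; these are incompatible unless you take $s_\zeta=\rho(\zeta)$ exactly, which destroys the uniform-per-level Vitali structure. So the telescoped pinching is only bounded by $C(\Lambda)$, not by anything you can force below $\delta_0^2$, and the NV hypothesis does not close. (There is also a circularity you do not address: the Fubini step produces a factor $\mu(B_t(\zeta))$, which must be $\lesssim t^{m-2}$ \emph{a priori}; in the paper this is handled by a separate induction on dyadic scale, cf.\ \eqref{dymeas}--\eqref{finebound}. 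Your ``bounded cross-level overlap'' claim does not substitute for this: nothing in your construction separates centers from different strata.)

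The paper's route is substantively different. It first proves an intermediate covering lemma (Lemma~\ref{lemma7.3}) whose construction places every new ball center on an $(m-2)$-plane $V$ that is $\rho r$-spanned by points of low pinching; Lemma~\ref{lemma6.5} then forces $I_\phi$ to be nearly constant along $V$, which yields the uniform bound $I_\phi(x_i,\rho s_i/5)\ge U-\eta$ at \emph{every} center (this is \eqref{7.3pinchest}). That is the mechanism producing smallness of the pinching at centers---it comes from the rigidity Lemmas~\ref{lemma6.4}--\ref{lemma6.5}, not from a stopping time---and it is exactly what your argument lacks. Proposition~\ref{prop7.2} is then obtained by iterating Lemma~\ref{lemma7.3} and exploiting that the residual ``bad'' set lies near an $(m-3)$-plane, so it can be recovered by $C(m)\rho^{3-m}$ balls of radius $4\rho r$, giving a geometric gain after choosing $\rho$ small.
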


With this, we can prove the desired Minkowski bound of Theorem \ref{MinkowskiBoundThm}.

\begin{proof}[Proof of Theorem \ref{MinkowskiBoundThm}]
Set $D_0=\Siu\cap B_1(0)$ and note that, by Lemma \ref{lemma3.4},
\[
U_0=\sup\{I_\phi(y,1/8):y\in D_0\}\leq C(\Lambda+1).
\]

We now begin to iteratively apply Proposition \ref{prop7.2}.  First, apply it with $r=1$, $s=\rho$, and $D=D_0$, obtaining $\{A_i\},\{B_{s_i}(x_i)\}$ for $i\in I_1$ as the corresponding decomposition of $D$.  Note that
\[
\sum_{i\in I_1}s_i^{m-2}\leq C_V.
\]

Let $I_1^g:=\{i:s_i=\rho\}$, these are the ``good indices" where the ball is already at the minimum desired radius of $\rho$.  If $s_i>\rho$, we have the frequency drop
\[
\sup\{I_\phi(y,s_i):y\in A_i\}\leq U_0-\delta.
\]

For such $i$, we apply the proposition again, with $r=s_i$, $s=\rho$, and $D=A_i$.  This gives us a decomposition $\{A_{i,j}\}$ of the $A_i$, with corresponding balls $B_{s_{i,j}}(x_{i,j})$, with the indices $j\in I_1^i$, and
\[
\sum_{j\in I_1^i}s_{i,j}^{m-2}\leq C_Vs_i^{m-2}.
\]

Taking all of the good balls from earlier, together with these new decompositions, we form $I_2$; more precisely $I_2$ is the union of $I_1^g$ with $I_1^i$ for each $i\notin I_1^g$.  This defines (after relabeling) a new decomposition of $\Siu\cap B_1(0)$ into sets $A_i$, with corresponding balls $B_{s_i}(x_i)$, where $i\in I_2$.  We have the bound
\[
\sum_{i\in I_2}s_i^{m-2}\leq C_V\sum_{i\in I_1}s_i^{m-2}\leq C_V^2.
\]

We also have, if $s_i>\rho$ for $i\in I_2$, then the frequency drop is
\[
\sup\{I_\phi(y,s_i):y\in A_i\}\leq U_0-2\delta.
\]

Iterating, for each $k$, we find a decomposition $\{A_i\}_{i\in I_k}$ with corresponding balls $\{B_{s_i}(x_i)\}_{i\in I_k}$, so that
\[
\sum_{i\in I_k}s_i^{m-2}\leq C_V^k
\]
and either $s_i=\rho$ or
\[
\sup\{I_\phi(y,s_i):y\in A_i\}\leq U_0-k\delta.
\]

But since the frequency is always positive, the latter case cannot occur after $\kappa=\floor{\delta^{-1}U_0}+1$ applications of the process; at this stage, we must have that $s_i=\rho$ for all $i\in I_\kappa$.  Therefore, $\{B_\rho(x_i)\}_{i\in I_\kappa}$ is a family of $N$ balls so that $N\rho^{m-2}\leq C_V^\kappa=C(m,X,\Lambda)$, which cover $\Siu\cap B_1(0)$.  We can also see that $B_\rho(\Siu\cap B_{1/8}(0))\subset\bigcup_{i}B_{2\rho}(x_i)$ and hence conclude that
\[
|B_\rho(\Siu\cap B_{1/8}(0))|\leq2^m\rho^mN\leq C\rho^2.
\]
\end{proof}

To prove Proposition \ref{prop7.2}, we use an intermediate covering.

\begin{lemma}\label{lemma7.3}
Let $u$ satisfy Assumptions \ref{cone} and \ref{bound}, $\rho\leq\frac{1}{128}$, and $0<\sigma<\tau\leq\frac{1}{8}$ be positive reals.  Let $D$ be any subset of $\A\cap B_\tau(0)$ and set $U:=\sup_{y\in D}I_\phi(y,\tau)$.  Then there exist a $\delta=\delta(m,X,\Lambda,\rho)$, a constant $C_R=C_R(m)$, and a covering of $D$ by balls $B_{r_i}(x_i)$ with the following properties:
\begin{enumerate}
\item $r_i\geq10\rho\sigma$,
\item $\sum_{i\in I}r_i^{m-2}\leq C_R\tau^{m-2}$
\item For each $i$, either $r_i\leq\sigma$, or the set of points
\begin{equation}
F_i=D\cap B_{r_i}(x_i)\cap\{y:I_\phi(y,\rho r_i)>U-\delta\}
\end{equation}
is contained in $B_{\rho r_i}(L_i)\cap B_{r_i}(x_i)$ for some $(m-3)$-dimensional affine subspace $L_i$.
\end{enumerate}
\end{lemma}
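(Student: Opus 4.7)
The plan is to construct the covering by a stopping-time/refinement procedure, and to obtain the volume bound (2) in one shot by applying Theorem \ref{thm:NV} to a discrete measure supported on the final centers.

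First, I fix parameters. Applying Lemma \ref{lemma6.4} (after rescaling to the unit ball) with $\rho_1 = \rho_3 = \rho$ and some $\rho_2 \in (0,1)$ adapted to $\rho$ produces a threshold $\epsilon_0 = \epsilon_0(m, X, \Lambda, \rho)$. I choose $\delta$ smaller than both $\epsilon_0$ and a constant multiple of $\delta_0^2$ (from Theorem \ref{thm:NV}) divided by the constant from Proposition \ref{flatbound}. I then call a ball $B_r(x)$ with $x \in D$ and $r \geq 10\rho\sigma$ \emph{final} if either $r \leq \sigma$, or else $F(x,r) := D \cap B_r(x) \cap \{y : I_\phi(y, \rho r) > U - \delta\}$ fails to $\rho r$-span an $(m-2)$-dimensional affine subspace. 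In the latter case the remark following the definition of $\rho r$-spanning places $F(x,r)$ in $B_{\rho r}(L)$ for some $(m-3)$-plane $L$, which is exactly condition (3).

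For a non-final ball $B_r(x)$, the set $F(x,r)$ contains $\rho r$-linearly independent points $\{y_j\}_{j=0}^{m-2}$ whose affine span is an $(m-2)$-plane $V$. Each $y_j$ satisfies $I_\phi(y_j, \rho r) > U - \delta$, while $y_j \in D$ gives $I_\phi(y_j, \tau) \leq U$; monotonicity of the frequency thus yields a pinching bound $< \delta \leq \epsilon_0$ across the scales relevant to the rescaled Lemma \ref{lemma6.4}. That lemma then forces $D \cap B_r(x) \subset \mathcal{A}(u) \cap B_r(x) \subset B_{\rho r}(V)$. I refine $B_r(x)$ by a Vitali-type cover of this thin tube, using balls of radius $r' := \max(\rho r, 10\rho\sigma)$ with centers in $D \cap B_r(x)$. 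Iterating, starting from $\{B_\tau(0)\}$, the radii shrink by a factor of $\rho$ per generation until they reach the floor $10\rho\sigma$, producing a final covering $\{B_{r_i}(x_i)\}_{i \in I}$ that clearly satisfies (1) and (3).

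The main obstacle is property (2). A naive iteration multiplies the total $\sum r_i^{m-2}$ by a constant at each of the $\sim \log_{1/\rho}(\tau/\sigma)$ generations, blowing up as $\sigma \to 0$. To circumvent this, I apply Theorem \ref{thm:NV} (after an obvious rescaling fitting everything into $B_2(0)$) to the discrete measure $\mu := \sum_{i \in I} (r_i/5)^{m-2}\delta_{x_i}$. The hypothesis of Theorem \ref{thm:NV} is verified using Proposition \ref{flatbound}: for any sub-ball $B_s(y) \subset B_{2\tau}(0)$ with $y \in \mathcal{A}(u)$ (which holds since centers lie in $D \subset \mathcal{A}(u)$), it gives $D_\mu^{m-2}(y,s) \lesssim s^{2-m}\int_{B_s(y)} W_s^{32s}(z)\,d\mu(z)$. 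A Fubini swap, together with the substitution identity
\[
\int_0^r \bigl(I_\phi(z,32s) - I_\phi(z,s)\bigr)\frac{ds}{s} = \int_r^{32r} I_\phi(z,u)\frac{du}{u},
\]
converts the radial double integral into a bound in terms of frequency values at the top scales. The key quantitative step --- and the principal difficulty --- is to show that each center $x_i$ lies in the $F$-set of some ancestor ball in the refinement tree, so that the accumulated pinching visible to $\mu$ at each scale is controlled by $\delta$ by design. Choosing $\delta$ small enough relative to $\delta_0^2$ then closes the hypothesis of Theorem \ref{thm:NV}, which yields the uniform bound $\mu(B_\tau(0)) \leq C_R(m)\tau^{m-2}$ and hence (2).
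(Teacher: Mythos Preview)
Your construction of the covering and your identification of the dichotomy (3) via the $\rho r$-spanning remark and Lemma \ref{lemma6.4} are fine and essentially match the paper. The gap is in your verification of (2).

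You propose to apply Theorem \ref{thm:NV} in one shot to the full measure $\mu=\sum_i (r_i/5)^{m-2}\delta_{x_i}$, bounding $D_\mu^{m-2}$ by Proposition \ref{flatbound} and then doing a Fubini swap. Tracing that computation, after Fubini you face
\[
\int_0^t s^{1-m}\int_{B_{t+s}(y)}W_s^{32s}(\zeta)\,\mu(B_s(\zeta))\,d\mu(\zeta)\,ds,
\]
and to kill the $s^{1-m}$ you need $\mu(B_s(\zeta))\lesssim s^{m-2}$ for every $s\le t$. But this is exactly the conclusion you are trying to extract from Theorem \ref{thm:NV}; the argument is circular. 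The paper breaks this circularity by an induction on dyadic scales: one assumes $\mu_s(B_s(\cdot))\le C_R s^{m-2}$ for $s=\bar r\,2^j$, uses this together with a crude doubling bound at scale $2s$ to verify the hypothesis of Theorem \ref{thm:NV} on $B_{2s}$, and thereby upgrades to the sharp bound at scale $2s$. Without this scale-by-scale bootstrap there is no a priori control on $\mu(B_s(\zeta))$, and the Minkowski bound of Theorem \ref{MinkowskiBoundThm} is not yet available (it is proved \emph{from} this lemma).

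There is a second, related issue. Even granting the measure bound, your substitution identity gives $\int_0^t W_s^{32s}(\zeta)\,\tfrac{ds}{s}\le \log 32\cdot\big(I_\phi(\zeta,32t)-I_\phi(\zeta,0^+)\big)$, which is only $O(\Lambda)$, not small. To make this $\le \eta$ with $\eta$ at your disposal you need the centers $x_i$ to satisfy $I_\phi(x_i,\rho s_i)\ge U-\eta$. Your assertion that ``each center $x_i$ lies in the $F$-set of some ancestor ball'' would do this, but it is not justified: you chose centers in $D$, not in $F$, and points of $D\setminus F$ have $I_\phi(\cdot,\rho r)\le U-\delta$ by definition. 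The paper instead places the refined centers on the affine plane $V$ spanned by the $F$-set, and then invokes Lemma \ref{lemma6.5} to propagate the frequency value along $V$; this is how the pinching estimate (\ref{7.3pinchest}) is obtained. You have not used Lemma \ref{lemma6.5} at all, and without it (or a correct substitute) the smallness needed for Theorem \ref{thm:NV} is unavailable.
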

\begin{proof}
First, without loss of generality we may assume that $\tau=\frac{1}{8}$ and $x=0$.  Although this may increase $I_\phi(0,64)$, it will still be bounded in terms of $\Lambda$ by Lemma \ref{lemma3.4}.  In the following, we will treat $\delta$ as fixed and determine the conditions on $\delta$ that will be necessary.  These will be met so long as $\delta$ is sufficiently small.


We shall first construct a covering of $D$ by an iterative process, starting with $\mathcal{C}(0)=\{B_{1/8}(0)\}$ and, at each step, we shall modify our covering by keeping some ``bad" balls (which we do not further refine in this lemma) and by refining the covering on the other ``good" balls.  We shall do this in such a way that $\mathcal{C}(k)$, the covering at the $k^{\text{th}}$ stage, satisfies the following properties:
\begin{itemize}
\item that the radii of the balls in $\mathcal{C}(k)$ are all of the form $\frac{1}{8}(10\rho)^j$ with $0\leq j\leq k$,
\item that if $B_r(x),B_{r'}(x')\in\mathcal{C}(k)$, then $B_{r/5}(x)\cap B_{r'/5}(x')=\varnothing$,
\item that if a ball $B\in\mathcal{C}(k)$ has a radius larger than $\frac{1}{8}(10\rho)^k$, then it is kept in $\mathcal{C}(k+1)$.
\end{itemize}

We note that to satisfy point (1) of our lemma's desired covering, which requires that all of the radii are at least $10\rho\sigma$, we merely need to stop this iterative process once some radii are less than $\sigma$.  This occurs at step $\kappa=-\floor{\log_{10\rho}(8\sigma)}$, which is the smallest natural number so that $\frac{1}{8}(10\rho)^\kappa\leq\sigma$.

Our inductive procedure is as follows.  Let $B_r(x)\in\mathcal{C}(k)$.  If $r=8^{-1}(10\rho)^j$ for $j<k$ (that is, if $B_r(x)$ is ``large"), we add it to $\mathcal{C}(k+1)$.  If $r=8^{-1}(10\rho)^k$, we consider the set
\[
F(B_r(x)):=D\cap B_r(x)\cap\{y:I_\phi(y,\rho r)>U-\delta\}.
\]

If $F(B_r(x))$ fails to $\rho r$-span an $(m-2)$-dimensional affine subspace, we say that $B_r(x)$ is ``bad" and add it to $\mathcal{C}(k+1)$; we note that $F(B_r(x))$ is contained in $B_{\rho r}(L)$ for some $(m-3)$-dimensional affine subspace $L$.

On the other hand, if $F(B_r(x))$ does $\rho r$-span an $(m-2)$ dimensional affine subspace $V$, we say that $B_r(x)$ is ``good" and discard it.  We will replace it with a new collection of (smaller) balls with radius $10\rho r$, and add these to $\mathcal{C}(k+1)$ instead.  

In this latter case, we recall Lemma \ref{lemma6.4}, and consider the $\epsilon(m,X,\Lambda,\rho,\rho,\rho)$ of this Lemma (where all of the $\rho_i$ of the Lemma have been set to $\rho$).  If we choose $\delta$ to be less than this $\epsilon(m,X,\Lambda,\rho,\rho,\rho)$, then we have that $D\cap B_r(x)$ is contained in $B_{\rho r}(V)$.

We take the collection of all good balls $\mathcal{G}(k)$ and enumerate them as $\{B_i\}$, with corresponding affine subspaces $V_i$.  Then, if we set
\[
G(k):=D\cap\bigcup_{i}B_{\rho r}(V_i)
\]
we see that this is just the part of $D$ that lies in our good balls.  We can cover $G(k)$ with a collection of balls of radius $10\rho r=8^{-1}(10\rho)^{k+1}$ so that the corresponding concentric balls of radii $2\rho r$ are pairwise disjoint, and we can do this so that the balls are all centered at points of $\bigcup_i (B_i\cap V_i)$, which will be important in the next step.  Call this new collection of balls $\mathcal{F}(k+1)$, and note that all balls in this collection have radius equal to $8^{-1}(10\rho)^{k+1}$. 

Consider the collection $\mathcal{B}(k)\subset\mathcal{C}(k)$ of the balls that have been kept in $\mathcal{C}(k+1)$; these are the ``bad balls" and the balls of radius larger than $8^{-1}(10\rho)^k$ (which were bad balls at some previous stage of our construction).  Let $\mathcal{B}_{1/5}(k)$ denote the corresponding collection of concentric balls shrunk by a factor of $\frac{1}{5}$.  If a ball $B\in\mathcal{F}(k+1)$ does not intersect any element of $\mathcal{B}_{1/5}(k)$, we add it into the covering $\mathcal{C}(k+1)$, otherwise we exclude it.  We note that this ensures that, for any two balls in $\mathcal{C}(k+1)$, the corresponding concentric balls of one-fifth the radius are disjoint (as desired).  We must check that $\mathcal{C}(k+1)$, as we have constructed it, is a cover of $D$.  Certainly $\mathcal{B}(k)\cup\mathcal{F}(k+1)$ covers $D$, by construction.  So, let $x\in D$.  If $x$ is contained in an element of $\mathcal{B}(k)$, we are done.  If $x\in B$ for some $B\in\mathcal{F}(k+1)$, on the other hand, we are also done if $B\in\mathcal{C}(k+1)$.  The only way for $B$ to be excluded from $\mathcal{C}(k+1)$ is if there is some ball $B_{r'}(x')\in\mathcal{B}(k)$ so that $B_{r'/5}(x')$ intersects $B$.  But the radius of $B$ is at most $\frac{r'}{10}$, so that in this case $B\subset B_{r'}(x')$.

Note, also, that if any of these balls contain no points of $D$, we may freely discard them (because they do not contribute to the cover).

Next, we have a pinching estimate.  In particular, for any $\eta>0$, for $\delta$ sufficiently small, then either
\begin{equation}\label{7.3pinchest}
\mathcal{C}(\kappa)=\{B_{1/8}(0)\}\hspace{5mm}
\text{or}\hspace{5mm}
I_\phi(x,\rho s/5)\geq U-\eta\text{ for each }B_s(x)\in\mathcal{C}(\kappa).
\end{equation}

This is because either the refining process stops immediately (in which case the former alternative holds) or, for any $B_s(x)\in\mathcal{C}(\kappa)$, we have that $s=8^{-1}(10\rho)^{j+1}$ for some $j\in\N$.  By our construction, we must have added this ball during the $j^{\text{th}}$ refinement stage.  This, in turn, means there is some good ball $B=B_{8^{-1}(10\rho)^j}(y)$ and some $(m-2)$-dimensional affine subspace $V$ so that $F(B)$ $\rho8^{-1}(10\rho)^j$-spans $V$, where $x\in V\cap B$.  Of course, there is some $z\in F(B)$ so that $z\in V\cap B$ as well.  Hence, from Lemma \ref{lemma6.5}, we can choose $\delta$ sufficiently small, depending on $\rho$ and $\eta$, to guarantee that
\[
|I_\phi(x,\rho s/5)-I_\phi(z,s)|\leq\frac{\eta}{2}
\]
and since $I_\phi(z,s)\geq U-\delta$, if we additionally require that $\delta<\frac{\eta}{2}$ we ensure that $I_\phi(x,\rho s/5)\geq U-\eta$.

Now, $\mathcal{C}(\kappa)$ is the desired covering, and to complete the proof it suffices to prove that
\[
\sum_{B_s(x)\in\mathcal{C}(\kappa)}s^{m-2}\leq C_R(m).
\]
We enumerate these balls in $\mathcal{C}(k)$ as $B_{5s_i}(x_i)$ for $i\in I$; note that when we enumerate them in this way, the corresponding $B_{s_i}(x_i)$ are pairwise disjoint.  With a mind to apply Theorem \ref{thm:NV}, we define the measures
\[
\mu=\sum_{i\in I}s_i^{m-2}\delta_{x_i}\hspace{5mm}\text{and}\hspace{5mm}\mu_s=\sum_{i\in I,s_i\leq s}s_i^{m-2}\delta_{x_i}.
\]

Clearly, we have that
\begin{itemize}
\item$\mu_t\leq\mu_\tau$ whenever $t\leq\tau$,
\item$\mu=\mu_{1/40}$,
\item setting $\overline{r}=\frac{1}{40}(10\rho)^\kappa$, we have $\mu_s=0$ for $s<\overline{r}$.
\end{itemize}

We seek to prove that $\mu_s(B_s(x))\leq C_R(m)s^{m-2}$ for every $s>0$ and for every $x\in B_{1/8}(0)$.  Now, if we set $\alpha=\floor{\log_2(\overline{r}^{-1}/8)}-6$, it will be enough to show
\begin{equation}\label{dymeas}
\mu_s(B_s(x))\leq C_R(m)s^{m-2}\text{ for all }x\text{ and for }s=\overline{r}2^j\text{ where }j=0,1,\dots,\alpha.
\end{equation}

First, note that unless $\{B_{5s_i}(x_i)\}$ is the initial cover $\{B_{1/8}(0)\}$, then all of the radii of balls in the cover are at most $\frac{10\rho}{40}\leq\frac{1}{512}$.  In particular, (\ref{dymeas}) would show that $\mu(B_{1/512}(x))\leq C_R(m)$ for any $x\in B_{1/8}(0)$, allowing us to conclude the desired packing estimate by covering $B_{1/8}(0)$ with finitely many balls of radius $\frac{1}{512}$.

We prove (\ref{dymeas}) by inducting on $j$.  The base case is relatively quick.  Since $\mu_{\overline{r}}(B_{\overline{r}}(x))=N(x,\overline{r})\overline{r}^{m-2}$, where $N(x,\overline{r})$ counts the number of balls $B_{s_i}(x_i)$ where $s_i=\overline{r}$ and $x_i\in B_{\overline{r}}(x)$.  We know that these balls are pairwise disjoint, and they all must be contained in $B_{2\overline{r}}(x)$, so that $N(x,\overline{r})\leq2^m$.

Now, we wish to show that if (\ref{dymeas}) holds for some $j<\alpha$, then it also holds for $j+1$.  We thus let $r=2^j\overline{r}$ and assume that for every $x$, $\mu_r(B_r(x))\leq C_R(m)r^{m-2}$, and seek to prove that $\mu_{2r}(B_{2r}(x))\leq C_R(m)(2r)^{m-2}$ for every $x$.

We shall first show the weaker bound
\begin{equation}\label{coarsebound}
\mu_{2r}(B_{2r}(x))\leq C(m)C_R(m)(2r)^{m-2}
\end{equation}
where $C(m)$ is some dimensional constant.  To do this, we note that
\[
\mu_{2r}=\mu_r+\sum_{i\in I,r<s_i\leq2r}s_i^{m-2}\delta_{x_i}=:\mu_r+\tilde{\mu}_r.
\]
Covering $B_{2r}(x)$ by $C(m)$ balls $B_r(x_j)$, the inductive assumption ensures that
\[
\mu_r(B_{2r}(x))\leq C(m)C_R(m)r^{m-2}.
\]
We also see that $\tilde{\mu}_r(B_{2r}(x))\leq N(x,2r)(2r)^{m-2}$, where $N(x,2r)$ counts the balls $B_{s_i}(x_i)$ with $r<s_i\leq 2r$.  The balls $B_r(x_i)$ must (still) be pairwise disjoint, and are all contained in $B_{3r}(x)$, so that $N(x,2r)\leq C(m)$ for some dimensional constant $C(m)$ as well.

We now want to upgrade the bound (\ref{coarsebound}) to
\begin{equation}\label{finebound}
\mu_{2r}(B_{2r}(x))\leq C_R(m)(2r)^{m-2}.
\end{equation}
For convenience, let $\mubar=\mu_{2r}\llcorner B_{2r}(x)$.  We shall seek to apply (a rescaled version of) Theorem \ref{thm:NV}; in particular if we can show that
\begin{equation}\label{7.3.5goal}
\int_{B_t(y)}\bigg(\int_0^tD_{\mubar}^{m-2}(z,s)\frac{ds}{s}\bigg)\phantom{i}d\mubar(z)<\delta_0^2t^{m-2}
\end{equation}
for all $y\in B_{2r}(x)$ and each $0<t\leq2r$, where $\delta_0$ is the constant that comes from Theorem \ref{thm:NV}, then we conclude that $\mubar(B_{2r}(x))\leq C_R(2r)^{m-2}$, where $C_R$ is the other constant from Theorem \ref{thm:NV}.  This is the desired bound.

To arrive at this estimate, we note that by (\ref{7.3pinchest}), we can assume that
\[
I_\phi(x,\rho s_i)\geq U-\eta
\]
for otherwise, the covering is simply $\mathcal{C}(\kappa)=\{B_{1/8}(0)\}$ in which case (\ref{dymeas}) is trivial.  That is, we can freely assume that the pinching of $x_i$ between the scales $\tau$ and $\rho s_i$ is controlled by $\eta$.

To exploit our pinching bounds, we define, for each $x_i\in\text{supp}(\mu)$
\[
\Wbar_s(x_i):=\begin{cases}
W_s^{32s}(x_i)&\text{if }s>s_i\\
0&\text{otherwise,}
\end{cases}
\]
and note that for each $i$ and for $0<s<1$,
\begin{equation}\label{mubarflat}
D_{\mubar}^{m-2}(x_i,s)\leq C(m,n,Q,\Lambda)s^{-(m-2)}\int_{B_s(x_i)}\Wbar_s(y)d\mubar(y).
\end{equation}

If $s<s_i$, this is simply $0\leq0$ because $\text{supp}(\mu)\cap B_s(x_i)=\{x_i\}$ in this case.  Otherwise, this is a consequence of Proposition \ref{flatbound}.

Let $t\leq2r$.  The estimate (\ref{mubarflat}) lets us bound
\begin{align*}
I:=\int_{B_t(y)}\bigg(\int_0^tD_{\mubar}^{m-2}(z,s)\frac{ds}{s}\bigg)\phantom{i}d\mubar(z)\leq C\int_{B_t(y)}\int_0^ts^{1-m}\int_{B_s(z)}\Wbar_s(\zeta)\phantom{|}d\mubar(\zeta)\phantom{|}ds\phantom{|}d\mubar(z)\\
=C\int_0^ts^{1-m}\int_{B_t(y)}\int_{B_s(z)}\Wbar_s(\zeta)\phantom{|}d\mubar(\zeta)\phantom{|}d\mubar(z)\phantom{|}ds.
\end{align*}

In this estimate we may intersect the domains of integration with $B_{2r}(x)$, as $\text{supp}(\mubar)\subset B_{2r}(x)$.  Moreover, we may integrate with respect to $\mu_s$ instead of $\mubar$ in both integrals.  In the $\zeta$ integral, if $\zeta\in\text{supp}(\mubar)\setminus\text{supp}(\mu_s)$ then $\zeta=x_i$ for some $i\in I$ for which $s_i>s$; hence $\Wbar_s(x_i)=0$ by definition.  On the other hand, if $z\in\text{supp}(\mubar)\setminus\text{supp}(\mu_s)$, then similarly $z=x_i$ with $s_i>s$, which means that $B_s(z)$ does not contain any other $x_i$, and we again see that $\Wbar_s(z)=0$.  Hence the $\zeta$ integral vanishes for such $z$.  So, we instead integrate with respect to $\mu_s$ and apply Fubini-Tonelli:
\[
I\leq C\int_0^ts^{1-m}\int_{B_{t+s}(y)\cap B_{2r}(x)}\Wbar_s(\zeta)\int_{B_s(\zeta)\cap B_{2r}(x)}d\mu_s(z)\phantom{|}d\mu_s(\zeta)\phantom{|}ds.
\]

For $s\leq r$ we recall the inductive assumption (\ref{dymeas}), and for $r<s\leq2r$ we have the coarse bound (\ref{coarsebound}).  Combining these, the innermost integral is bounded by $C(m)s^{m-2}$, so that
\begin{align}
I&\leq C(m,X,\Lambda)\int_0^t\int_{B_{t+s}(y)\cap B_{2r}(x)}\Wbar_s(\zeta)\phantom{|}d\mu_s(\zeta)\frac{ds}{s}\notag\\
&\leq C\int_0^t\int_{B_{t+s}(y)\cap B_{2r}(x)}\Wbar_s(\zeta)\phantom{|}d\mu_t(\zeta)\frac{ds}{s}\notag\\
&\leq C\int_{B_{2t}(y)}\int_0^t\Wbar_s(\zeta)\frac{ds}{s}d\mu_t(\zeta).\label{7.3penult}
\end{align}

Fix some $\zeta\in\text{supp}(\mu_t)$, so that $\zeta=x_i$ for some $i$.  Recall that by definition $\Wbar_s(\zeta)=0$ when $s<s_i$ and otherwise $\Wbar_s(x_i)=I_\phi(x_i,32s)-I_\phi(x_i,s)$.  Let $N$ be the largest integer so that $2^Ns_i\leq t$, and note that because $t\leq2r\leq\frac{1}{512}$, we then have that $32\cdot2^{N+1}s\leq\frac{1}{8}$.  We then bound this inner integral,
\begin{align}
\int_0^t\Wbar_s(\zeta)\frac{ds}{s}&=\int_{s_i}^t\Wbar_s(x_i)\frac{ds}{s}=\int_{s_i}^t(I_\phi(x_i,32s)-I_\phi(x_i,s))\frac{ds}{s}\notag\\
&\leq\sum_{j=0}^N\int_{2^js_i}^{2^{j+1}s_i}(I_\phi(x_i,32s)-I_\phi(x_i,s))\frac{ds}{s}\notag\\
&\leq\sum_{j=0}^N(I_\phi(x_i,32\cdot2^{j+1}s_i)-I_\phi(x_i,2^js_i))\int_{2^js_i}^{2^{j+1}s_i}\frac{ds}{s}\notag\\
&=\log2\sum_{j=0}^N(I_\phi(x_i,2^{j+6}s_i)-I_\phi(x_i,2^js_i))\notag\\
&=\log2\sum_{\ell=0}^5\sum_{j=0}^N(I_\phi(x_i,2^{j+\ell+1}s_i)-I_\phi(x_i,2^{j+\ell}s_i))\notag\\
&=\log2\sum_{\ell=0}^5(I_\phi(x_i,2^{N+\ell+1}s_i)-I_\phi(x_i,2^\ell s_i))\notag\\
&\leq6\log2(I_\phi(x_i,\tfrac{1}{8})-I_\phi(x_i,s_i))\stackrel{\text{(\ref{7.3pinchest})}}{\leq}(6\log2)\eta\label{etabound}
\end{align}

Now, we note that $\mu_t(B_{t}(x))\leq C(m)t^{m-2}$ for every $x$, where we (as above) invoke (\ref{dymeas}) for $t\leq r$ and (\ref{coarsebound}) for $t\leq2r$.  Covering $B_{2t}(y)$ by finitely many such balls, we see that $\mu_t(B_{2t}(y))\leq C(m)t^{m-2}$ as well.  With (\ref{7.3penult}) and (\ref{etabound}), we conclude that
\begin{equation}\label{7.3final}
\int_{B_t(y)}\bigg(\int_0^tD_{\mubar}^{m-2}(z,s)\frac{ds}{s}\bigg)d\mubar(z)\leq C(m,X,\Lambda)\eta t^{m-2}.
\end{equation}

If we take $\delta$ sufficiently small, we can make $\eta$ sufficiently small that (\ref{7.3.5goal}) holds.  This completes the proof of (\ref{dymeas}) and of the lemma.
\end{proof}

We conclude this section with the proof of Proposition \ref{prop7.2}.
\begin{proof}[Proof of Proposition \ref{prop7.2}]
As in the previous lemma, we note that we can take $x=0$ and $r=\frac{1}{8}$ without loss of generality.  We again use an inductive process to construct the desired covering, where Lemma \ref{lemma7.3} is used to form the intermediate coverings.  The $\rho$ of Lemma \ref{lemma7.3} will be chosen at the end of this inductive process.

First, apply Lemma \ref{lemma7.3} with $\tau=\frac{1}{8}$ and $\sigma=s$, and obtain the corresponding covering $\{B_{r_i}(x_i)\}=:\mathcal{C}(0)$.  By the dichotomy of Lemma \ref{lemma7.3}, each such ball either has $r_i\leq s$, or the points of high order in that ball lie close to an affine $(m-3)$-plane.  Thus, we define subcollections of $\mathcal{C}(0)$, $\mathcal{G}(0)=\{B_{r_i}(x_i):r_i\leq s\}$ and $\mathcal{B}(0)=\{B_{r_i}(x_i):r_i>s\}$.  For each $B_{r_i}(x_i)\in\mathcal{B}(0)$, we have the set $F_i$ and the affine subspace $L_i$ given by Lemma \ref{lemma7.3}.  For each $i$, we can cover $B_{2\rho r_i}(L_i)\cap B_{r_i}(x_i)$ by a number $N\leq C(m)\rho^{3-m}$ of balls of radius $4\rho r_i$.  If $4\rho r_i\leq s$, we are done refining these balls, we include these in $\mathcal{G}(1)$.  Otherwise, we include them in $\mathcal{B}(1)$ (and we will repeat the refining process).  We define $\mathcal{C}(1)=\mathcal{G}(1)\cup\mathcal{B}(1)$.  Observe that
\[
\sum_{B_{r_i}(x_i)\in\mathcal{C}(1)}r_i^{m-2}\leq C(m)\rho^{3-m}\sum_{B_{r_j}(x_j)\in\mathcal{C}(0)}(\rho r_j)^{m-2}=C(m)\rho\sum_{B_{r_j}(x_j)\in\mathcal{C}(0)}r_j^{m-2}.
\]
Thus, if we choose $\rho$ to be smaller than $\rho_0(m):=\frac{1}{2C(m)}$, which is a purely dimensional constant, we can guarantee that
\[
C(m)\rho\leq\frac{1}{2}.
\]
We now fix $\rho$ to be smaller than this dimensional constant and $\frac{1}{128}$ (to satisfy the hypotheses of Lemma \ref{lemma7.3}).

We proceed inductively.  With the collection $\mathcal{B}(n)=\{B_{r_i}(x_i)\in\mathcal{C}(n):r_i>s\}$ given, we apply Lemma \ref{lemma7.3} to each of these bad balls.  For each such ball, we have that the $F_i$ from the lemma lies in $B_{\rho r_i}(L_i)\cap B_{r_i}(x_i)$ for the affine subspace $L_i$ from the lemma, and we again cover $B_{2\rho r_i}(L_i)\cap B_{r_i}(x_i)$ with a finite number $N\leq C(m)\rho^{3-m}$ of balls with radius $4\rho r_i$, where $C(m)$ is the same dimensional constant as in the previous paragraph.  Once again, if $4\rho r_i<s$, we include these new balls in $\mathcal{G}(n+1)$, and we otherwise include them in $\mathcal{B}(n+1)$, with $\mathcal{C}(n+1)$ being the union of these collections.

After finitely many stages, we will reach a $\mathcal{C}(k)$ with no balls of radius greater than $s$.  We now take $\mathcal{C}=\bigcup_{j=0}^k\mathcal{C}(k)$ and note that
\[
\sum_{B_{r_i}(x_i)\in\mathcal{C}}r_i^{m-2}\leq\sum_{\ell=0}^k2^{-\ell}\sum_{B_{r_j}(x_j)\in\mathcal{C}(0)}r_j^{m-2}\leq2C_R(m).
\]

We now define the sets $A_i'$ for $B_{r_i}(x_i)$.  Let $0\leq n\leq k$, and define
\begin{itemize}
\item for $B_{r_i}(x_i)\in\mathcal{G}(n)$, we have $r_i\leq s$ and we simply set $A_i'=D\cap B_{r_i}(x_i)$;
\item for $B_{r_i}(x_i)\in\mathcal{B}(n)$, we set $A_i'=(D\cap B_{r_i}(x_i))\setminus F_i$, where $F_i$ is the set coming from Lemma \ref{lemma7.3}.
\end{itemize}

We note that for e.g. $n=0$, the $F_i$ are covered by $\mathcal{C}(1)$, so that
\[
D\subset\bigcup_{B_{r_i}(x_i)\in\mathcal{C}(0)}A_i'\bigcup_{B_{r_i}(x_i)\in\mathcal{C}(1)}B_{r_i}(x_i).
\]
Of course, we then note that the $F_i$ for the bad balls of $\mathcal{C}(1)$ are, in turn, covered by $\mathcal{C}(2)$.  This lets us replace the latter union by $\bigcup_{B_{r_i}(x_i)\in\mathcal{C}(1)}A_i'\bigcup_{B_{r_i}(x_i)\in\mathcal{C}(2)}B_{r_i}(x_i)$.  Proceeding inductively, we conclude that
\[
D\subset\bigcup_{B_{r_i}(x_i)\in\mathcal{C}}A_i'
\]
because for $\mathcal{C}(k)$, all of the balls have radius at most $s$, so that for such balls $A_i'=B_{r_i}(x_i)\cap D$.  Hence the collection $A_i'$ does indeed form a covering of $D$.

Finally, by definition, either $r_i\leq s$ or
\[
\sup\{I_\phi(y,\rho r_i):y\in A_i'\}\leq U-\delta
\]
which is almost the same as the desired (\ref{freqdrop}), having only an additional factor of $\rho$ in the second argument of $I_\phi$.  Since $A_i'\subset B_{r_i}(x_i)$, we simply cover this (larger) set by $C(m)\rho^{-m}=C(m)$ balls $B_{\rho r_i}(x_{ij})$, where we recall that $\rho$ itself depends only on $m$.  Setting $A_{ij}=B_{\rho r_i}(x_{ij})\cap A_i'$, we obtain (\ref{freqdrop}) for these new balls, and retain the packing estimate up to multiplying by $C(m)$.

Lastly, some of the balls have radius less than $s$, but all have radii at least $10\rho s$, so replacing these with balls of radius $s$ completes the proposition, again multiplying the packing bound by another dimensional constant $C(m)$.
\end{proof}

\section{Rectifiability}\label{Rectifiability8}

In this section, we prove Theorem \ref{MicroRectThm}.  To do this, we recall Theorem \ref{thm:AT}.

\begin{theorem}[\cite{AT}, Corollary 1.3]
Let $S\subset\R^n$ be $\mathcal{H}^k$-measurable with $\mathcal{H}^k(S)<\infty$ and consider $\mu=\mathcal{H}^k\llcorner S$.  Then $S$ is countably $k$-rectifiable if and only if
\[
\int_0^1D_\mu^k(x,s)\frac{ds}{s}<\infty\hspace{5mm}\text{for }\mu\text{-a.e. }x.
\]
\end{theorem}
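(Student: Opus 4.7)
The statement is Azzam--Tolsa's characterization of $k$-rectifiability in terms of the mean flatness. The plan is to treat the ``only if'' and ``if'' directions separately, since they rely on very different techniques. For the ``only if'' direction I would start by reducing, modulo an $\mathcal{H}^k$-null set, to the case that $S$ is a countable disjoint union of Lipschitz graphs. At $\mu$-a.e.\ $x$ the approximate tangent $k$-plane $T_x$ exists and the upper and lower densities equal $1$. Using $T_x$ itself as the competitor in the infimum defining $D_\mu^k(x,s)$, the task is to bound the rescaled $L^2$ distance from the graph piece through $x$ to $x+T_x$ on $B_s(x)$. A Whitney decomposition of the Lipschitz parametrization together with a Rademacher/Lebesgue-point argument would give a pointwise bound $D_\mu^k(x,s)\leq Cs^{2\alpha}$ at $\mu$-a.e.\ $x$ for some $\alpha>0$, which is far more than needed for $\int_0^1 D_\mu^k(x,s)\frac{ds}{s}$ to converge.

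The hard direction is a Jones-type traveling-salesman result for general $k$ and general measures, and I would follow the Azzam--Tolsa strategy. First, by Tchebyshev it suffices to show countable rectifiability of each set $S_M := \{x\in S : \int_0^1 D_\mu^k(x,s)\frac{ds}{s}\leq M\}$ of positive $\mu$-measure. For such $x$ and each dyadic $r = 2^{-n}$ I would select a near-minimizing affine $k$-plane $L_{x,r}$ and establish, via an $L^2$ rotation estimate, that $\angle(L_{x,r}, L_{x,2r}) \leq C\, D_\mu^k(x,2r)^{1/2}$. The finiteness of the integral then forces $\ell^2$-summability of these tilts along most chains of scales. Next I would set up a stopping-time (``corona'') decomposition on a Christ-type dyadic grid adapted to $\mu$: each coherent tree of cubes is one on which $L_{x,r}$ stays close to a single reference plane, and on each such tree the tilt summability lets one realize the relevant portion of $S_M$ as a Lipschitz graph over that plane, hence as a $k$-rectifiable piece.

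The main obstacle will be the Carleson packing estimate that controls the $\mu$-measure of the stopping cubes produced by the corona decomposition; this is the analytic core of \cite{AT}, resting on a delicate bilinear-form $L^2$ argument situated inside the David--Semmes uniform-rectifiability framework. A full execution would essentially reconstruct the bulk of \cite{AT}, which is why the theorem is invoked here as a black box and applied, in Section \ref{Rectifiability8}, through the flatness bound of Proposition \ref{flatbound} to the measure $\mu = \mathcal{H}^{m-2}\llcorner\Siu$.
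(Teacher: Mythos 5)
There is nothing in the paper to compare your argument against: Theorem \ref{thm:AT} is imported verbatim from \cite{AT} (Corollary 1.3) and is never proved in this paper; it is used purely as a black box in Section \ref{Rectifiability8}, where the actual work is to verify its hypothesis for $\mu=\mathcal{H}^{m-2}\llcorner(\Siu\cap B_{1/8}(0))$ via Proposition \ref{flatbound} and the Minkowski bound. Your closing paragraph correctly recognizes this, so your proposal is consistent with how the result functions in the paper.

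As a standalone proof of the statement, however, your write-up has a genuine gap, and you say so yourself: everything that makes the theorem hard is named rather than executed. The ``only if'' direction is plausible in outline (reduce to Lipschitz graphs, use the approximate tangent plane as competitor, get a.e.\ decay of $D_\mu^k(x,s)$), though the pointwise bound $D_\mu^k(x,s)\leq Cs^{2\alpha}$ needs a genuine $C^1$- or Lusin-type approximation argument, not just Rademacher, and in the literature this direction is itself a separate theorem (Tolsa's companion paper) rather than a routine computation. For the ``if'' direction, the tilt estimate $\angle(L_{x,r},L_{x,2r})\lesssim D_\mu^k(x,2r)^{1/2}$ requires lower density bounds on $\mu$ in $B_r(x)$ (without them the $L^2$ flatness does not control the angle), the passage from $\ell^2$-summable tilts to a Lipschitz graph needs a quantitative Reifenberg/parametrization theorem, and the Carleson packing estimate for the stopping cubes---which you correctly identify as the analytic core of \cite{AT}---is left entirely untouched. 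So the proposal is a roadmap of the external proof, not a proof; given that the paper itself treats the theorem as a citation, the appropriate course here is exactly what the paper does, namely to invoke \cite{AT} and concentrate on verifying its hypotheses.
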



We prove the rectifiability of $\Siu\cap B_{1/8}$ using an argument similar to the proof of Lemma \ref{lemma7.3}.

\begin{proof}[Proof of Theorem \ref{MicroRectThm}]
From Theorem \ref{MinkowskiBoundThm}, we know that $\mu:=\mathcal{H}^{m-2}\llcorner(\Siu\cap B_{1/8}(0))$ is a finite Radon measure.  Moreover, by rescaling, we obtain the estimate
\begin{equation}\label{hausdorffbound}
\mu(B_r(x))\leq C(m,X,\Lambda)r^{m-2}.
\end{equation}
Arguing as in Lemma \ref{lemma7.3} we use Proposition \ref{flatbound} to derive the bound
\begin{align}
\int_{B_t(y)}\int_0^tD_\mu^{m-2}(z,s)\frac{ds}{s}\phantom{|}d\mu(z)&\leq\int_{B_t(y)}\int_0^ts^{1-m}\int_{B_s(z)}W_s^{32s}(\zeta)d\mu(\zeta)\phantom{|}ds\phantom{|}d\mu(z)\notag\\
&=C\int_0^ts^{1-m}\int_{B_t(y)}\int_{B_s(z)}W_s^{32s}(\zeta)d\mu(\zeta)\phantom{|}d\mu(z)\phantom{|}ds\notag\\
&\leq C\int_0^ts^{1-m}\int_{B_{t+s}(y)}W_s^{32s}(\zeta)\int_{B_s(\zeta)}d\mu(z)\phantom{|}d\mu(\zeta)\phantom{|}ds\notag\\
&\stackrel{\text{(\ref{hausdorffbound})}}{\leq}C\int_0^t\frac{1}{s}\int_{B_{t+s}(y)}W_s^{32s}(\zeta)d\mu(\zeta)\phantom{|}ds\notag\\
&\leq\int_{B_{2t}(y)}\int_0^tW_s^{32s}\frac{ds}{s}d\mu(\zeta)\label{thm2.6intbound}.
\end{align}

Next, as in the proof of (\ref{etabound}), we observe that
\[
\int_0^tW_s^{32s}(\zeta)\frac{ds}{s}\leq6\log2(I_\phi(\zeta,\tfrac{1}{8})-I_\phi(\zeta,0))\leq C(m,X,\Lambda)
\]
as long as $32t<\frac{1}{8}$.  Using this estimate along with (\ref{hausdorffbound}) and (\ref{thm2.6intbound}), we see that
\[
\int_{B_t(y)}\int_0^tD_\mu^{m-2}(z,s)\frac{ds}{s}d\mu(z)<\infty
\]
for any $y\in B_{1/8}(0)$ and any $t<\frac{1}{256}$.  This, along with crude bounds on $D_\mu^{m-2}(z,r)$ for $r>\frac{1}{512}$, allows us to apply Theorem \ref{thm:AT} to conclude that $\Siu\cap B_{1/8}(0)$ is indeed rectifiable.
\end{proof}

\bibliographystyle{amsplain}
\bibliography{BuildingsRectifiability}
\end{document}